\DeclareMathOperator{\Tr}{Tr}
\DeclareMathOperator{\Diag}{Diag}
\DeclareMathOperator{\Id}{Id}
\DeclareMathOperator{\Span}{Span}
\DeclareMathOperator{\Ima}{Im}
\DeclareMathOperator{\sign}{sign}
\DeclareMathOperator{\cc}{cc}
\DeclareMathOperator{\second}{\prime\prime}
\DeclareMathOperator{\Sp}{Sp}
\DeclareMathOperator{\CC}{\mathbb{C}}
\DeclareMathOperator{\NN}{\mathbb{N}}
\newcommand*\norme[1]{\|#1\|}
\newcommand{\xxx}{\mathrm{x}}
\newcommand{\ccc}{\mathrm{c}}
\newcommand{\sss}{\mathrm{s}}
\newcommand{\abs}[1]{\left| #1 \right|}
\theoremstyle{plain}
\newtheorem{thhm}{Theorem}[section]
\newtheorem{thm}[thhm]{Theorem}
\newtheorem{lem}{Lemma}[section]
\newaliascnt{prop}{thhm}
\newtheorem{prop}[prop]{Proposition}
\newaliascnt{conj}{thhm}
\newtheorem{conj}[conj]{Conjecture}
\newtheorem{defi}{Definition}[section]
\theoremstyle{remark}
\newtheorem{rem}{Remark}[section]
\newtheorem{ex}{Example}[section]
\crefname{thhm}{Theorem}{Theorems}
\crefname{prop}{Proposition}{Propositions}
\crefname{conj}{Conjecture}{Propositions}
\title{Spectral gap of a convex combination of a random permutation and a deterministic matrix}
\author{Sarah Timhadjelt}
\begin{document}
	
	\maketitle
	\begin{abstract}
		We study the spectral gap behavior of an operator obtained by summing a random permutation $M$ and a
		deterministic bistochastic matrix $Q$. We are interested in the asymptotic in terms of dimension. In the case where $(M,Q)$ are asymptotically free with amalgamation over the diagonal, we can compute  limit operators $(u,q)$ which give the weak limit spectral distribution. Therefore we introduce free with amalgamation operators that are suitable for computing the
		spectral gap limit of our operator in high dimensions. We then approximate the spectral radius of the corresponding limit operator and finally give an upper bound for the spectral
		radius of the finite-dimensional operator. In particular, we show that if the deterministic matrix underlying graph is an expander, then the underlying Markov chain associated to the sum with a random permutation is again an expander.
		
	\end{abstract}
	\tableofcontents 
	
	\section{Introduction}\label{presentation}
	\subsection{Notations and conventions}
	
	For all integer $N \ge 1$ we set $[N] := \{1,..,N\}$, $\mathrm{M}_N(\CC)$ the algebra of complex matrices of dimension $N$, $\mathrm{D}_N(\CC)$ its subalgebra of diagonal matrices and $\mathrm{S}_N \subset \mathrm{M}_N(\CC)$ the symmetric group. We denote $\Delta: \mathrm{M}_N( \CC) \rightarrow \mathrm{D}_N(\CC)$ the projection defined by $\Delta(B) = (\delta_{ij}b_{ij})_{i,j \in [N]}$ where $\delta_{ij} = \mathds{1}(i=j)$ and $B = (b_{ij})_{i,j \in [N]} \in \mathrm{M}_{N}(\CC)$. We denote $\Tr(\cdot)$ the classic trace operation on matrices and $\Tr_N(\cdot) := \frac{1}{N} \Tr(\cdot)$ the normalized trace.  We denote by $\abs{\cdot}$ the cardinality of a set and the usual absolute value. We denote by $\mathbb{P}(\cdot)$ and $\mathbb{E}(\cdot)$ the probability and expectation under the uniform measure on $\mathrm{S}_N$. We denote $\mathrm{M}_N(L^\infty)$ the algebra of complex random matrices of dimension $N$ with essentially bounded entries and $\mathrm{D}_N(L^\infty)$ its subalgebra of diagonal matrices. If $M$ is a random permutation matrix taken uniformly we denote $M \sim \mathcal{U}\left(\mathrm{S}_N\right)$.\\
	
	We denote $\mathrm{B}(\mathrm{H})$ the algebra of bounded operator on the Hilbert space $\mathrm{H}$. For a bounded operator $b \in \mathrm{B}\left(\mathrm{H}\right)$ where $\mathrm{H}$ is an Hilbert space, we denote by $\|b\|$ its operator norm associated to the inner scalar product norm of $\mathrm{H}$ and we denote by $b^\ast$ the adjoint operator of $b$. For $\epsilon  \in \{+,-\}$, we set $b^\epsilon = b$ if $\epsilon = +$ and $b^\epsilon = b^\ast$ if $\epsilon = -$. Regarding the spectrum, for $B$ an operator on a Hilbert space of dimension $N$, we consider the eigenvalues with multiplicities. We denote the $k$-th eigenvalue by $\lambda_{k}(B)$ and order them as follows
	\begin{equation*}
		\abs{\lambda_{1}(B)} \ge \abs{\lambda_{2}(B)} \ge \cdots \abs{\lambda_N(B)}.
	\end{equation*}
	We introduce the following sparsity parameters of $B \in \mathrm{M}_N(\mathbb{C})$ 
	
	\begin{align*}
	d= d(B) &:= \underset{x \in [N]}{\max}|\{y \in [N]~;~ B_{xy} \ne 0\}|\vee \underset{y\in [N]}{\max}|\{x\in [N]~;~ B_{xy} \ne 0\}|.
	\end{align*}
For any integer $ L \ge 0$, we introduce the additional sparsity parameter
	\begin{equation}\label{eq:kappa_p}
		\kappa_{L} (B)  :=  \underset{\substack{k \le p \le L,\\ (x_t,y_t) \in [N]^{2k}}}{\max}~\left\{\max_{\substack{n_1 +\cdots +n_k = p \\ n_t \le \ell(B)}}~ \prod_{t=1}^{k}(B^{n_t})_{x_t y_t}\right\}^{\frac{1}{p+k}}, 
	\end{equation}
	where $\ell(B):= \frac{\ln(N)}{7\ln(d(B)}$ and we set
	\begin{equation}\label{eq:kappa}
		\kappa(B):= \sup_{L} \kappa_L(B).
	\end{equation}
	We will omit the $B$ in the notation $\kappa_L$ and $\kappa$ if there is no confusion. 
	
	\subsection{Model}
	In this article we will consider the following model of matrix $ Q \in \mathrm{M}_N (\mathbb{C})$.
	\begin{enumerate}[label=(H:\arabic*)]
		\item \label{H:1} For all $x,y \in [N]$, $Q_{xy} \ge 0$ and the vector $\boldsymbol{1}:= (1,\cdots,1)^\top \in \mathbb{R}^{N}$ is an eigenvector of $Q$ and $Q^{\ast}$, i.e.
		\begin{equation*} 
			Q \boldsymbol{1} = \delta \boldsymbol{1} \quad \text{ and } \quad Q^{\ast} \boldsymbol{1} =  \delta \boldsymbol{1}
		\end{equation*}
		for some $\delta >0$.
		\item \label{H:2} There exists $K > 0$ independent of $N$ such that $\|Q\| \le K$.
		\item \label{H:3} There exists $0 < \alpha < 1$ such that we have 
		$$d:= \underset{x \in [N]}{\max}|\{y \in [N]~;~ Q_{xy} \ne 0\}|\vee \underset{x \in [N]}{\max}|\{y \in [N]~;~ Q^\ast_{xy} \ne 0\}| = e^{o\left(\ln(N)^{\alpha} \right)}.$$
	\end{enumerate}
	
	We denote 
    \begin{equation}\label{eq:defi_Q_perp}
    Q_{\perp}:= Q - \frac{1}{N} \boldsymbol{1}^{\top}\boldsymbol{1}.
    \end{equation}
    From \ref{H:1} we have that $Q$ and $Q_{\perp}$ have a triangular representation in a common basis.
    We see $Q$ as an adjacency matrix for a $\delta$-regular graph. We then take interest in the operators, denoted $A_{\xxx}$ for $\xxx \in \{ \sss,\ccc\}$. The first operator is given by the following sum
	\begin{equation}\label{eq:1origins}
		A_{\sss}= M+Q
	\end{equation}
	where $M\sim \mathcal{U}(\mathrm{S}_N)$. In the case where $\delta =1$, $Q$ is called doubly stochastic. The set of bistochastic matrices being a convex polytope, any convex combination of a permutation matrix and $Q$ remains bistochastic. In this paper we also are interested in the following convex combination
	\begin{equation}\label{eq:2origins}
		A_{\ccc}:= (1-r)M+rQ
	\end{equation}
	where $M \sim \mathcal{U}(\mathrm{S}_N)$ and $0 < r < 1$. If we consider $Q$ and $M$ as transition matrices of Markov chains on the same vertex set $[N]$, $P$ is the sum of these two graphs, rescaled to obtain a Markov chain. According to Perron-Frobenius Theorem, $\lambda_{1}(A_{\mathrm{c}})=1$ (\textit{resp.} $\lambda_{1}(A_{\mathrm{s}})=2$) is then the largest eigenvalue of $A_{\mathrm{c}}$ (\textit{resp.} of $A_{\mathrm s}$). We are interested in the asymptotic behavior in dimension $N$ of the spectral gap $\lambda_{1}(A_{\mathrm{x}}) - |\lambda_{2}(A_{\mathrm{x}})|$  for $\ast\in \{\mathrm{s}, \mathrm{c}\}$. The main result of this article (see \cref{thm:bigthm_simplified} and \cref{bigtheorem}) is a probabilistic upper bounds on $|\lambda_{2}(A_{\mathrm{x}})|$ that depends only on parameters of $Q$. More precisely the bound we are interested in is the spectral radius of an operator that depends on $Q$ and $r$ and is independent with the permutation matrix $M$.
	\subsection{Graph expansion}
	
	The density of edges in relation to the number of vertices of a graph is called the sparsity of the graph. The idea of graph expansion is the fact that we want to construct growing networks in terms of vertices, control the sparsity and maintain a consequent connectivity. We call the study of the asymptotic behavior of the sparsity/connectivity ratio \textit{graph expansion}.\\
	In this paper we study the second largest eigenvalue of the operator defined by \eqref{eq:1origins} that can be seen as an adjacency matrix if $Q$ is an adjacency matrix and the operator given by \eqref{eq:2origins} that can be seen as a Markov operator if $Q$ is a Markov operator. The motivation behind the main result of this article is to prove or improve the expansion of the graph or chain defined by $Q$ by adding the random edges defined by $M$.\\
	
	To explain the link with the second largest eigenvalue of the operators $A_{\sss}$ and $A_{\ccc}$, we first consider a more local approach of expansion. We ask whether, for a given graph, there is a not too small subset of the vertex set that is poorly connected to the rest of the vertex set. In computer science this phenomenon is called a \textit{bottleneck}. The Cheeger constant of $G=(V,E)$ a finite connected undirected graph is a common quantifier of graph expansion and it is defined by the following
	\begin{equation}\label{cheegerconst}
		h(G) := \min \{ \frac{|\partial F|}{|F|}; ~ F \subset V,~ 0 < |F| \le \frac{1}{2}|V|\}
	\end{equation}
	where for $F\subset V$, $\partial F \subset E$ is the set of edges connecting a vertex of $F$ to a vertex of $V\backslash F$. For $G=(V,E)$ a graph and $ \epsilon > 0$ we say that $G$ is a $\epsilon$-\textit{expander} if we have 
	$$h(G) \ge \epsilon.$$ 
	It is the object considered as respectful of the condition of having good connectivity while being sparse. The graph expansion is also of interest when considering the simple walk on the graph. Recall that for any undirected finite graph $G=(V,E)$ one can define its adjacency operator $\mathrm{A} = \left( \mathds{1} \left( \{x,y\} \in E \right) \right)_{x,y \in V} \in \mathrm{M}_{\abs{V}}(\CC)$ but also the Markov operator $\mathrm{P}\in \mathrm{M}_{\abs{V}}(\CC)$ associated to the simple walk on $G$ defined by
	\begin{equation*}
		\mathrm{P}_{xy} := \frac{\mathds{1} \left( \{x,y\} \in E \right) }{\sum_{z \in V} \mathds{1}\left( \{x,z\} \in E \right)}.
	\end{equation*}
	The spectral gap measures the asymptotic mixing rate to equilibrium. More precisely, for $\mathrm{P}$ any transition matrix to an aperiodic and irreducible Markov chain on the set $[N]$, for any probability measure $\pi_0$ on $V$ we have
	$$\lim_{t\to +\infty} \|\pi_0 \mathrm{P}^t - \pi\|_{TV}^{1/t} = |\lambda_{2}(\mathrm{P})|,$$
	where $\pi$ is the invariant measure of $\mathrm{P}$ and for $\nu$ a measure on $[N]$, $\|\nu\|_{TV}:= \frac{1}{2} \sum_{x} \nu(x)$ denotes the total variation norm. The link between the Cheeger constant and spectral analysis is given by the following inequality.
	\begin{thm}[Cheeger Inequality]
		Let $G=(V,E)$ be a connected, $d$-regular undirected graph. We denote by $\mathrm{P}$ the Markov operator of the simple walk on $G$. We have
		$$\frac{1 - \lambda_2(\mathrm{P})}{2} \le \frac{h(G)}{d} \le \sqrt{1-\lambda_2(\mathrm{P})}.$$
	\end{thm}
	Therefore an alternative way to define expansion of graphs is the following.
	\begin{defi}[Spectral-expansion]\label{defi:expander}
		A $d$-regular graph $G=(V,E)$ is a $\epsilon$-\textup{expander for eigenvalues} if
		$$\lambda_2(\mathrm{P}) \le 1-\epsilon.$$
		where $\mathrm{P}$ is the Markov operator of the simple walk on $G$.
	\end{defi}
	Even though we take interest in this article in non-Hermitian operators, for the reader understanding we only gave definitions for undirected graphs in this current section. Nevertheless we can similarly define the Cheeger constant and link it to the spectral expansion of a directed graph (see for instance \cite[Theorem 5.1]{chung2005laplacians}). 
	\subsection{Free probability and spectral analysis of random matrices}\label{section:freepbandspecanal}
	
	Now that we gave some motivation for the study of spectrum in graph analysis, we introduce a tool for asymptotic spectral analysis for possibly random graphs. Roughly speaking, the notion
	of freeness in non-commutative probability is the non-commutative equivalent of independence for
	(classical commutative) random variables and therefore it is the suitable framework to study the asymptotic behavior of matrices spectrum. When considering independent non-commutative variables, such as random matrices, this independence is indeed translated into freeness asymptotically. Voiculescu \cite{voiculescu1991limit} introduced free probability with the idea of constructing spaces of operators imitating basic probability theory and we refer to Anderson, Guionnet and Zeitouni \cite{ander2010intro} for a comprehensive study of random matrices.
	
	\paragraph{Freeness and convergence in distribution.} We have \textit{convergence in distribution} for a couple $\mathbf{Q}=(Q_1,Q_2)$ of random matrices if for all $P$ non-commutative symmetric polynomial the sequence $\left( \Tr_N(P(\mathbf{Q}, \mathbf{Q}^\ast) \right)_{N\in \NN}$ converges.  Assuming some control over the operator norms of the family $\mathbf{Q}$ this convergence implies the existence of a family $\mathbf{q} = (q_1,q_2) \in \mathrm{B}(\mathrm{H})^2$ of self-adjoint operators on a Hilbert space such that for all polynomial $P$ we have $\lim_{N\rightarrow \infty} \Tr_N (P(\mathbf{Q}, \mathbf{Q}^\ast)) = \langle P(\mathbf q, \mathbf{q}^\ast)\xi, \xi\rangle$  where $\xi \in \mathrm{H}$ verifies $\|\xi \|_{2}=1$. The linear form $\tau:= \langle \cdot \xi, \xi \rangle : \mathrm{B}(\mathrm{H}) \rightarrow \CC$ has to be seen as the non-commutative analog of the expectation $\mathbb{E}(\cdot)$ in the classic probability setting. For the previous property of the convergence in distribution of random matrices one can refer to \cite[Corollary 5.2.16]{ander2010intro}. In term of spectral analysis, considering $P(\cdot)$ a non-commutative symmetric polynomial we can apply the spectral theorem to the operator $\mathrm{b} =P(\mathbf q , \mathbf{q}^\ast)$ and denoting $\mathrm{P}= P( \mathbf Q, \mathbf{Q}^{\ast})$ it means that there exists a unique probability measure $\mu_{\mathrm{b}}$ such that
	\begin{equation}\label{eq:spectralasymp}
		\mu_{\mathrm{P}} := \frac{1}{N} \sum_{k=1}^N \delta_{\lambda_k(\mathrm{P})} \underset{N\rightarrow \infty}{\rightharpoonup} \mu_{\mathrm{b}}
	\end{equation}
	where the previous convergence is weak. In particular if one can compute the distribution $\mu_{\mathrm{b}}$ and denoting $\rho(\mathrm{b})$ its spectral radius we have that for all $\delta >0 $ there exists $\epsilon>0$ such that for all $N$ large enough
	\begin{equation}\label{eq:optimal_exp}
		\sigma(\mathrm{P}) \cap [-\rho(\mathrm{b}) + \delta, \rho(\mathrm{b}) - \delta ] \ge \epsilon N.   
	\end{equation}
	In particular it gives the best expansion one can get for a graph with an adjacency matrix given by $\mathrm{P}$.\\
	The \textit{freeness} of the family $\mathbf{q}$ of operators (see \cite[Section 5]{ander2010intro}) has to be seen as the analog of independence in classic probability. Freeness will translate asymptotically the independence and/or invariance of laws of random matrices and allow computability of the measure $\mu_{\mathrm{b}}$ (see \cite[Section 5.3.3]{ander2010intro} and for the non-Hermitian case \cite{biane1999computation}). To see the interest of asymptotic freeness in our setting, recall that we take interest in a random matrix, given by either \eqref{eq:1origins} or \eqref{eq:2origins}, of the form
	$$A = U + Q$$
	where $Q$ is deterministic and $U$ random. We then have the following theorem.
	\begin{thm}[Voiculescu, 1991]\label{thm:asymptfree(Q,U)}
		Let $\mathrm{G}_N$ be either the orthogonal group $\mathrm{O}_N$ or the unitary group $\mathrm{U}_N$. If $Q\in \mathrm{M}_{N}(\CC)$  converges in distribution and is invariant under $\mathrm{G}_N$ conjugation and $U \in \mathrm{G}_N$ is Haar distributed, then the couple $(Q,U)$ is asymptotically free.
	\end{thm}
	If we consider that $Q$ is in the setting of the previous theorem and $U$ is Haar distributed unitary independent of $Q$, then the asymptotic spectral measure of $P=\frac{1}{4}\left(Q+ Q^\ast + U + U^\ast\right)$ is completely characterized by the asymptotic distribution of $Q$. The asymptotic freeness had been proved for a family of Haar distributed unitaries of $\mathrm{U}_N$ by Voiculescu \cite{voiculescu1991limit} in 1991 and for random permutations by Nica \cite{nica1993asymptotically} in 1993. In this article we want to consider random graphs, and the fact that we softened the regularity of the distribution for the random matrix $Q$ by considering random adjacency operators weakens the freeness of the asymptotic operators. 
	\paragraph{Freeness with amalgamation.} This new freeness is called \textit{freeness with amalgamation} and we give here the definition. These notions can be found in the article by Au, C{\'e}bron, Dahlqvist, Gabriel and Male \cite{au2018large} or in Voiculescu, Dykema, and Nica \cite{voiculescu1992free}.
	\begin{defi}\label{def1}
		We give first the setting and definition of freeness with amalgamation.
		\begin{itemize}
			\item An \textup{Operator valued probability space} denoted $(\mathcal{A},\mathcal{B}, \mathrm{E},\ast)$ consists in a unital algebra $\mathcal{A}$, an involution $\ast: \mathcal{A} \rightarrow \mathcal{A}$, a unital sub-algebra $\mathcal{B} \subset \mathcal{A}$ stable under $\ast$, and a \textup{conditional expectation} $\mathrm{E} : \mathcal{A} \to \mathcal{B}$ which is a unit-preserving linear map such that $\mathrm{E}(b_1 a b_2 )= b_1 \mathrm{E}(a) b_2$ for any $a \in \mathcal{A}$ and $b_1,b_2 \in \mathcal{B}$.
			\item We write $\mathcal{B} \langle X,X^\ast \rangle$ for the algebra generated by $\mathcal{B}$  and the freely non-commuting indeterminates $(X,X^\ast)$. For a monomial $P(X, X^\ast)= b_0 X^{\epsilon_1} b_1 X^{\epsilon_2} \cdots X^{\epsilon_n} b_n \in \mathcal{B}\langle X, X^{\ast}\rangle$ the \textup{degree} of $P(X,X^\ast)$ is $n$, its \textup{coefficients} are $b_0,...,b_n \in \mathcal{B}$ and for all $1 \le k \le n$ we have $\epsilon_k\in \{+,-\}$. Then the \textup{operator valued distribution}, or $\mathrm{E}$-distribution, of $a\in \mathcal{A}$ is the map given by
			$$\mathrm{E}_{a} ~:~ P(\cdot) \in \mathcal{B}\langle X,X^\ast \rangle \mapsto \mathrm{E}[P(a,a^\ast)] \in \mathcal{B}.$$
			\item Two elements $a_1,a_2 \in \mathcal{A}^2$ are called \textup{free with amalgamation over $\mathcal{B}$} whenever for any $n \ge 2$, any $\ell_1 \neq \ell_2 \neq \cdots  \neq\ell_n \in \{1,2\}$ and any monomials $P_1, ..., P_n \in \mathcal{B}\langle X, X^\ast\rangle$, one has
			$$\mathrm{E}\left[ \left( P_1(a_{\ell_1},a_{\ell_1}^\ast) - \mathrm{E}\left[P_1 (a_{\ell_1},a_{\ell_1}^\ast)\right] \right) \cdots \left(P_n (a_{\ell_n},a_{\ell_n}^\ast) - \mathrm{E} \left[P_n(a_{\ell_n},a_{\ell_n}^\ast)\right]\right)\right]=0.$$
		\end{itemize}
		We denote $(\mathcal{A},\mathcal{B}, \mathrm{E},\Phi,\ast)$ when the following is verified.
		\begin{itemize}
			\item The $4$-tuple $(\mathcal{A},\mathcal{B}, \mathrm{E},\ast)$ is an operator-valued probability space.
			\item  The triple $(\mathcal{A},\Phi,\ast)$ is a non-commutative probability space with a faithful trace, i.e. we have $\Phi : \mathcal{A} \rightarrow \CC$ linear, verifying $\Phi(1) = 1$ and 
			$$\Phi(aa^\ast) = 0 \Rightarrow a =0.$$
			\item Finally we have $\Phi \circ \mathrm{E} = \Phi$.
		\end{itemize}
		The triplet $(\mathcal{A},\Phi, \ast)$ is called a \textup{non-commutative probability space} and the \textup{$\ast$-distribution} of $a\in \mathcal{A}$ is the map 
		\begin{equation*}
			\Phi_{a} : P(\cdot) \in \CC\langle X,X^{\ast} \rangle \mapsto \Phi[P(a,a^{\ast})] \in \CC.
		\end{equation*}
	\end{defi}
	Note that $(\mathrm{M}_N(\mathbb{C}),\mathrm{D}_N (\mathbb{C}),\Delta,\Tr_N,\ast)$ and $(\mathrm{M}_N(L^\infty),\mathrm{D}_N(L^\infty),\Delta,\mathbb{E} \circ\Tr_N,\ast)$ are operator-valued probability spaces. With no loss of generality here, any operator-valued probability space can be viewed as a subalgebra of a $\mathrm{B}( \mathrm{H})$, set of bounded operators on a Hilbert space $\mathrm{H}$. This freeness has a particular interest in random graph theory because the previous theorem holds again replacing freeness with freeness with amalgamation and invariance under $\mathrm{G}_N$ conjugation by invariance under $\mathrm{S}_N$ conjugation.
	\begin{thm}\cite[Theorem  1.3]{au2018large}\label{thm:permutinvasfreewitham}
		If $Q \in \mathrm{M}_{N}(\CC)$  converges in distribution and is invariant under $\mathrm{S}_N$ conjugation and $M \in \mathrm{S}_N$ is uniformly distributed, then the couple $(Q,M)$ is asymptotically free with amalgamation over the diagonal, i.e the subalgebra of diagonal matrices.
	\end{thm}
	Once again the convergence in distribution and the freeness with amalgamation of the limit operators allows to compute the distribution of the limit family. An explicit algorithm to compute the joint distribution of free with amalgamation operators is given by Belinschi, Mai, Speicher \cite{belinschi2017analytic}. The previous setting is better suited for graph theory applications, since we can compute random graphs which joint entry wise law is not invariant under $\mathrm{O}_N$ or $\mathrm{U}_N$ conjugation but under $\mathrm{S}_N$. Indeed one can consider $Q$ the adjacency matrix (\textit{resp.} the Markov operator of the simple walk) of an Erd\H{o}s-R{\'e}nyi graph, then the couple $(Q,M)$ is asymptotically free with amalgamation and the asymptotic spectral distribution of the operators defined by \eqref{eq:1origins} and \eqref{eq:2origins} can be computed. Again, in an Hermitian case where we consider $Q$ a deterministic hermitian matrix invariant under $\mathrm{S}_N$ conjugation and $M$ a uniformly distributed random permutation, considering 
	$$\mathrm{P} := Q+ M+ M^\ast$$
	one can apply \cref{thm:permutinvasfreewitham} and denoting $(q,u)$ the limit free with amalgamation operators for the joint distribution of $(Q,M)$ we have again
	\begin{equation*}
		|\sigma(\mathrm{P}) \cap [-\|\mathrm{b}\|,\|\mathrm{b}\|]^\mathrm{c}| = o (N)
	\end{equation*}
	where $\mathrm{b}= q + u + u^{\ast}$.\\\
	In our case, not only we do not assume any asymptotic freeness of the couple $(Q,M)$, but we also consider non-Hermitian operators \eqref{eq:1origins} \eqref{eq:2origins}. Nevertheless we show that we have expansion under some assumptions on $Q$ distribution (\ref{H:1}, \ref{H:2}, \ref{H:3}). The fact that we use an approximation of the spectral radius of the sum of free with amalgamation operators will then allow us to say that the expansion we prove here is optimal, up to showing the weak convergence of the empirical spectral measure towards the \textit{brown measure} of the limit operator. Computing the \textit{brown measure} radius for the limit object is possible in some cases \cite{biane1999computation}. Also simulations lead to think that it is indeed this radius that gives the asymptotic second largest eigenvalue in modulus (see \cref{someapplication}, \cref{spectre_M+Q} and \cref{fig1}).

Whether one has asymptotic freeness or freeness with amalgamation of $(Q,M)$, the fact that we are dealing with non-Hermitian operators invalidates the spectral theorem for the operator $p = (1-r)u + rq$. One can then ask if Girko's method applies to show the convergence \eqref{eq:spectralasymp} with $\mu_{\mathrm{b}}$ the brown measure of the operator $\mathrm{b}$ (see for instance \cite{bordenave2012around,rudelson2008littlewood} for $Q$ with independent entries, \cite{basak2018circular} for $P= \frac{1}{d} \sum_{s} M(s)$ with $(\ln(N)^{12}/\ln(\ln(N))^4 \le  d = o (N)$). This convergence would then depends on the distribution chosen for $Q$. In the case where we have the convergence \eqref{eq:spectralasymp} we then can conclude that the expansion given in this article is optimal in regard of \eqref{eq:optimal_exp}.
\paragraph{Strong asymptotic freeness.}
	In any case, the convergence given in \eqref{eq:spectralasymp} is not sufficient to conclude about the expansion of operators. In fact, it is precisely the absence of \textit{outliers} that is needed to satisfy the expander \cref{defi:expander}. One way to obtain these expansions is to show \textit{strong} asymptotic freeness. This strong convergence had been shown for $\mathbf{M}=(M(s))_{s\in [d]}$ a family of iid uniformly distributed random permutation by Bordenave and Collins \cite{CBeigen}. In particular it means that the random regular graph defined by the adjacency operator $A= \sum_{s=1}^{d} M(s)+ M(s)^{\ast}$ has the best expansion possible, i.e it verifies that for all $\epsilon >0$ we have that for $N$ large enough $\lambda_{2}(A) \le 2 \sqrt{2d-1} + \epsilon$. The fact that the best expansion possible is given by $2\sqrt{2d-1}$ can be seen as a consequence of the asymptotic freeness given by Nica \cite{nica1993asymptotically} as the spectral radius of the limit operator $\mathrm{b}$ in the convergence \eqref{eq:spectralasymp}, or directly by the Alon-Boppana bound of regular graph (see for instance \cite{Mohar_2010}). 
	We now give a simplified version of \cref{bigtheorem}. 
	
	\begin{thm}\label{thm:bigthm_simplified}
		Let $Q$ be a bistochastic matrix and $M$ an independent uniformly distributed random permutation.  There exists $(q,u)$ a pair of free with amalgamation operator such that $u$ is unitary and $q$ and $Q$ have same $\ast$-distribution asymptotically. Besides for any $0 < r < 1/3$ and $0 < c < 1$ and for all $\epsilon >0$ we have 
		\begin{equation*}
			\mathbb{P} ( \abs{\lambda_2((1-r)M + rQ)} \le (1+ \epsilon)\rho((1-r)u + rq)) \ge N^{-c}
		\end{equation*}
		for $N$ large enough.
	\end{thm}
	The previous simplified version is the case discussed in \cref{section:convcomb}. If one can compute $\rho=\rho((1-r) u + rq)$ in the previous theorem and if $\rho < 1$ this gives us expansion for the random walk defined by $(1-r)M + rQ$. To prove then that this expansion is optimal, one way would be to prove the convergence of the empirical spectral measure towards a measure which support is countained in the complex disc of radius $\rho$. In regards of \cite[Lemma 4.3]{bordenave2012around} in the case where $(M,Q)$ is asymptotically free with amalgamation, the only thing left to show that the empirical spectral converges towards the brown measure of the operator $(1-r)u + rq$ is the $\log$-\textit{integrability}, which often comes down to controlling singular values of $Q+M$. 
	Finally, an interest of looking directly at the operator $A=Q+M$ is that its structure and the computations of its moments are closer to the more general case where we study $A^\prime = a_{0} \otimes \Id_N + a_{1}\otimes Q + a_{2}\otimes M$ with $a_{i}$ fixed matrices of dimension $k$. Indeed, in many cases the strong convergence in the free case is showed using a \textit{linearization trick} saying that if the spectrum of $A^\prime$ for any choices of $(a_i)_i$ such that $A^\prime$ is Hermitian converges then we also have convergence of the spectrum of $P(M,Q)$ for any symmetric polynomial $P(\cdot)$ (see for instance \cite[Section 6]{CBeigen}). The main issue in adapting this linearization trick to our case is the fact that the matrix $a_{1}\otimes Q$ does not verify \ref{H:1}.
	
	\section{Main result}\label{section:freenesswamalgamation}
	\subsection{Existence of the free with amalgamation operators}
	We need the following lemma to enunciate our main theorem. 
	\begin{lem}\label{lem:existandconstructofqu}
		Let $Q \in \mathrm{M}_N(\mathbb{C})$ be a deterministic matrix. There exists $(\mathcal{A}, \mathcal{B}, \mathrm{E}, \Phi)$ an operator-valued probability space and $(q,u) \in \mathcal{A}$ with $u$ Haar unit operator such that
		\begin{itemize}
            \item asymptotically, the operators ${q}$ and $Q$ have the same $\ast$-distribution, 
			\item The operators $q$ and $u$ are free with amalgamation over $\mathcal{B}$.
		\end{itemize}
	\end{lem}
	The proof of the previous lemma and useful characteristics of of the couple $(q,u)$ are given in \cref{intermediateconstruct}. For any matrix $Q \in \mathrm{M}_N(\CC)$ we define the operators $\{\mathrm{b}_{\xxx}\}_ {\xxx \in \{ \sss, \ccc\}}$ as follows
	\begin{align}
		&\mathrm{b}_{\mathrm{s}}:= u+q,~~  \mathrm{b}_{\mathrm{c}}:= (1-r)u + rq\label{eq:defaandp}
	\end{align}
	where $(q,u)$ are the operators given by \cref{lem:existandconstructofqu}. The previous operators
	asymptotically bounds the spectral radius of the matrices defined by \eqref{eq:1origins} and \eqref{eq:2origins}. 
	The main result of the present article consists in upper bounding $\rho(A_{\xxx}|_{\boldsymbol{1}^\perp})$ for $\xxx \in \{\sss,\ccc\}$ defined by \eqref{eq:1origins} and \eqref{eq:2origins} by an approximation of the spectral radius of $\mathrm{b}_{\xxx}$ defined by \eqref{eq:defaandp}. More precisely, owing to Gelfand's Theorem  we have for such $\mathrm{b} \in \mathrm{B}(\mathrm{H})$
	$$\rho(\mathrm b) = \lim_{n \to \infty} \|\mathrm{b}^{n}\|^{1/n}$$
	where one has to recall here that with no loss of generality we can assume that the algebra $\mathcal{A}$ given by \cref{lem:existandconstructofqu} verifies $\mathcal{A}=\mathrm{B}(\mathrm{H})$ for some Hilbert space $\mathrm{H}$. Therefore the approximations of spectral radius we will use are given by
	\begin{equation}\label{defNell0}
		\mathrm{N}_{n,\mathrm{x}}(Q) = \|\mathrm{b}_{\xxx}^{n}\|^{1/n}
	\end{equation}
	for $\xxx \in \{ \sss, \ccc\}$ and a fixed $n \in \NN$. We will omit the dependence on $Q$ and simply denote by $\mathrm{N}_{n, \xxx}$ when there is no confusion. The fact that this norm depends only on the matrix $Q$ will be a consequence of the construction of the operators $(q,u)$ in \cref{intermediateconstruct}.
	
	\subsection{Main result}
	
	We fix $0 < r < 1$ and we set 
	\begin{equation}\label{eq:rho(ell_0)}
		\rho_{n,\sss}= \max\{\mathrm{N}_{n,\sss},2\kappa(Q),\delta ,2 \|Q_{\perp}\|\}~\text{and}~ 
		\rho_{n,\ccc}= \max\{\mathrm{N}_{n,\ccc}, 2r\kappa(Q), r\delta ,2r\|Q_{\perp}\|\}
	\end{equation}
	where $\kappa(Q)$ is given by \eqref{eq:kappa} and $ \mathrm{N}_{n,\xxx} $ by \eqref{defNell0} for $\xxx \in \{ \sss, \ccc \}$. The main theorem is the following one.
	\begin{thm}\label{bigtheorem}
		We consider $Q$ a matrix of dimension $N$ that verifies \ref{H:1}, \ref{H:2} and \ref{H:3}. Then for any $n$ integer and $0<c<1$ there exists $C>0$ such that setting
		$$\epsilon:= C\frac{ \ln(d)}{\ln(N)^{\alpha}},$$
		we have
		$$\mathbb{P}( \abs{\lambda_{2}(A_{\xxx})} \ge (1+ \epsilon) \rho_{n,\xxx}) \le N^{-c},$$
		for $M\sim \mathcal{U}(\mathrm{S}_N)$, $\xxx \in \{\sss, \ccc\}$ and $N$ large enough.
	\end{thm}
	
\section{Needed improvements}
The strategy of the proof of the main technical statement of this paper, given by \cref{big2theorem} is largely inspired by previous bounding of norms and/or spectral radius using \textit{tangle-freeness} of paths in the moment method \cite{bordenave2015new,CBeigen,Brito_Dumitriu_Harris_2022}. A good improvement of \cref{bigtheorem} would be to get rid of $\delta = \sum_y Q_{x,y}$ in the upper bound given by \cref{boundingsumq(gamma)}, which would then give the following theorem. 
\begin{conj}\label{conj:better}
\cref{bigtheorem} holds if one replaces $\rho_{n,\xxx}$ with 
\begin{equation*}
		\tilde{\rho}_{n,\sss}= \max\{\mathrm{N}_{n,\sss},2\kappa(Q),2 \|Q_{\perp}\|\}~\text{and}~ 
		\tilde{\rho}_{n,\ccc}= \max\{\mathrm{N}_{n,\ccc}, 2r\kappa(Q),2r\|Q_{\perp}\|\}.
\end{equation*}
\end{conj}

An important application of the previous statement would be for $A$ an adjacency matrix of a $d$-regular independent of $M$ a uniformly distributed random matrix. Then we would have that for $N$ large enough, $|\lambda_{2} (A+M)| \le C\sqrt{d}$ for some universal constant $C$ and therefore we have a good expansion. We give more detail about a possible strategy to obtain this conjecture in \cref{rem:conj}.
	\section{Some applications}\label{someapplication}
	In this section we give examples where \cref{bigtheorem} holds, and where the upper bound $\rho_{n,\xxx}$ defined by \eqref{eq:rho(ell_0)} is equal to the approximation of the spectral radius of the deterministic operator, i.e ${\mathrm N}_{n,\xxx}$ defined by \eqref{defNell0}. For these applications it is important to notice that owing to \cref{lem:normeplusque1} we have 
    \begin{equation*}
    \mathrm{{N}}_{n,\xxx} \ge \mathds{1}(\xxx=\sss) + (1-r)\mathds{1}(\xxx= \ccc).
    \end{equation*}
 We also give an example where an improvement of $\rho_{n,\sss}$ would be appreciated to conclude. 
	\subsection{Convex combination}\label{section:convcomb}
	In the setting of \cref{bigtheorem} and for any $Q \in \mathrm{M}_N(\mathbb{C})$ that verifies \ref{H:1}, it is sufficient to consider $0 < r < 1$ small enough to have $\rho_{n,\ccc} = {\mathrm{N}}_{n,\ccc}$ where $\rho_{n,\ccc}$ is defined by \eqref{eq:rho(ell_0)}. Indeed we have then $Q_{xy} \le \delta $ for all $x,y \in [N]$ and therefore for any integers $1\le k\le p$ and $(n_t)_{t \le k}$ such that $\sum_{t=1}^k n_t=p$ and for any $(x_t,y_t)_{t} \in [N]^{2k}$ we have
	\begin{equation}\label{majkappa}
		\prod_{t=1}^{k}(Q^{n_t})_{x_t y_t}\le \delta^{p},
	\end{equation}
 	which implies $\kappa(Q) \vee \|Q_{\perp}\| \le 1 \vee \delta$ where $\kappa(\cdot)$ is defined by \eqref{eq:kappa}. In particular for $Q$ bistochastic the upper bound is the approximation $\rho_{n,\ccc} = \mathrm N_{n,\ccc}$ for any $n \ge 0$ and for any $0 < r < \frac{1}{3}$. To then show that the spectral gap is strictly positive, one would then need to provide an upper bound for the norm  $\norme{[(1-r)u+rq]^{n}}^{1/n}$ that is strictly less than $1$. In the following application we give an explicit example where we can give such an upper bound. In the formulation of \cref{thm:bigthm_simplified} we used the fact that owing to Gelfand theorem we have that for all $\epsilon >0$ we have $\|[(1-r) u + rq]^n\|^{1/n} \ge (1- \epsilon) \rho((1-r) u + rq)$ for $n$ large enough.
	\subsection{The asymptotically free case}
	We consider $Q$ the Markov operator of a simple walk obtain with a perfect matching. One way to compute $Q$ is to take any $S \in \mathrm{S}_{N}$ permutation matrix and consider $Q=SUS^\ast$ where $U=\Diag \left(\begin{pmatrix} 0&1 \\ 1&0 \end{pmatrix} \cdots \begin{pmatrix} 0&1 \\ 1&0 \end{pmatrix}\right)$. The spectral gap for the simple walk defined by $Q$ is trivially equal to $0$ since $1$ is an eigenvalue with an eigenspace of dimension at least $2$ for $N \ge 4$. Computing the result of Biane and Lehner \cite{biane1999computation} we obtain that the spectral radius of $a=u+q$ where $q$ has same distribution as $Q$ and $u$ is free with $Q$ is $\rho =  \sqrt{3}$ (given by the equation $|\lambda|^2+1 = |\lambda^2 -1|^2$). For any $\epsilon >0$, owing to Gelfand theorem, there exists $n_{0}>0$ such that for all $n \ge n_0$ we have ${\mathrm{N}}_{n,\sss} \le \left(1 + \epsilon \right) \rho$. A realization of the spectrum of $A=M+Q$ encircled by the circle of radius $\rho$ is given in \cref{spectre_M+Q}.
	\begin{figure}[h]
		\centering
		\includegraphics[scale=0.7]{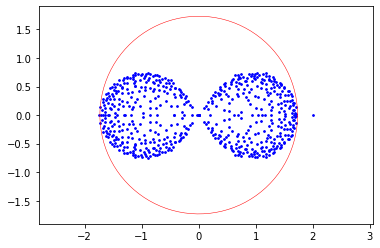}
		\caption{Plot of the eigenvalues of $A=M+Q$ with $Q$ perfect matching when $N=600$. In red the circle of radius equal to $\rho=\sqrt{3}$ the spectral radius of $u+q$ computed from the formula given by Biane and Lehner in \cite{biane1999computation}.}
		\label{spectre_M+Q} 
	\end{figure}
	However the \cref{bigtheorem} does not allow to directly conclude about the spectral gap for $A$. Indeed in this case we have $\kappa =1$ and therefore $\rho_{n,\sss} =2\kappa = 2\|Q_{\perp}\| =\lambda_1(A)$. It shows that an improvement on the factor $2$ of $\kappa$ in the definition of $\rho_n$ would be beneficial to our purposes. This factor appears in \eqref{eq:appear2kappa} in the proof of \cref{boundingsumq(gamma)}. \\
    
	However, one can consider again the convex combination case and compute the explicit asymptotical radius applying Biane and Lehner method. We rewrite 
	$$\mathrm{b}_{\ccc} = (1-r)u + rq = (1-r)\left( u + \frac{r}{1-r}q \right) = (1-r)(u+ q^\prime)$$
	and the domain of the Brown measure of $u+ q^\prime$ is contained in the curve of equation 
	\begin{equation*}
		\frac{1}{\abs{\lambda -\varphi}^2} +\frac{1}{\abs{\lambda +\varphi}^2} = 2
	\end{equation*}
	where $\varphi = r/(1-r)$ (see \cite[Equation (4.5)]{biane1999computation}). Therefore considering $r \le 1/3$ we have $\rho_{n,\ccc} = {\mathrm{N}}_{n,\ccc}$. Besides, for all $\epsilon > 0$ we have for $n$ large enough $ {{\mathrm N}}_{n,\ccc} \le \rho(p) + \epsilon$ which proves a strictly positive lower bound for the spectral gap for $N$ large enough. 
	\begin{figure}[h]
		\null\hfill
		\subfigure[r=1/3\label{fig1a}]{%
			\includegraphics[scale=0.33]{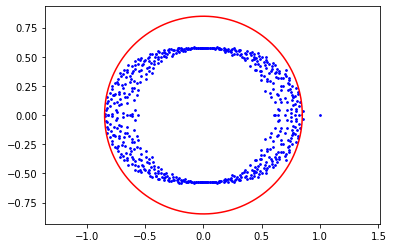}}
		\hfill
		\subfigure[r=1/4\label{fig1b}]{%
			\includegraphics[scale=0.33]{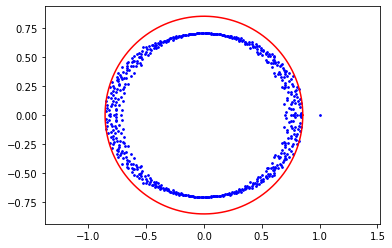}}
		\hfill\null
		\caption{In blue the eigenvalues of $P=(1-r)M+rQ$ with $Q$ perfect matching and $N=600$. In red the spectral radius of $(1-r)u + rq$.}\label{fig1}
	\end{figure}
	
	\subsection{Application to $d$-regular Ramanujan graphs}
	Let $G=(V,E)$ be a finite, connected, non-oriented $d$-regular graph with $V=[N]$. Let $B$ be its adjacency matrix and we set $Q:= \frac{1}{d}B$. In this case we have $\delta=1$. For all $n \ge 0$ integer we have
	\begin{equation*}
		Q^n = \frac{1}{N} \mathbf{1}^{\top} \mathbf{1} + Q^n|_{\boldsymbol{1}^\perp}
	\end{equation*}
	where $\mathbf{1}^{\top}  \mathbf{1}$ is the $N \times N$ matrix which entries are all equal to $1$. Setting
	$$\rho := \max\{|\mu|;~ \mu \in \Sp(Q);~ |\mu|\neq 1\}$$
	we have that for all $x,y \in [N]$
	\begin{equation}\label{eq:powersQ}
		(Q^n)_{xy} \le \frac{1}{N} + \rho^{n}.
	\end{equation}
	Besides for all $d$, there exists $c_d >0$ such that we have the following Alon-Boppana lower bound \cite{Mohar_2010}
	$$\rho \ge \frac{2\sqrt{d-1}}{d} -\frac{c_d}{\ln(N)^2}.$$
	 From the previous bound we have that $\rho \ge c\frac{2 \sqrt{d-1}}{d}$ for all integer $d$, $0 < c < 1$ and $N$ large enough. Therefore for all $n \le \frac{\ln(N)}{7\ln(d)}$ we have $\rho^{n} \ge N^{- 1/38}$. Owing to \eqref{eq:powersQ} and \eqref{eq:kappa_p} we have
	\begin{equation}\label{eq:boundkappa}
		\kappa(Q) \le \left(1 + o_{N}(1)\right)\rho.
	\end{equation}
	In particular, for any undirected $d$-regular graph verifying that $\rho < \frac{1}{2}$, for $N$ large enough we have $\rho_{n;\xxx} = \mathrm{N}_{n,\xxx}$ ans the spectral radius of $A_{\mathrm{x}}$ is upper bounded by the spectral radius of $\mathrm{b}_{\xxx}$. As an example, we can consider Ramanujan graphs which by definition verifies
	$$\rho \le \frac{2\sqrt{d-1}}{d}.$$
    In that case, for all $d\ge 15$ we have $\|Q_{\perp}\|=\rho<\frac{1}{2}$, for $N$ large enough and  owing to \eqref{eq:boundkappa} we have $\kappa(Q) < 1$ and therefore the upper bound for $|\lambda_2(A_{\sss})|$ given by \cref{bigtheorem} is ${\mathrm{N}}_{n,\sss} \ge 1$. An explicit example of such graphs are the Ramanujan graphs $X^{p,q}$ of degree $p+1$ and cardinality given by $\frac{q(q^2-1)}{2}$ where $(p,q)$ are prime numbers. Their construction is given in  \cite[Chapter 4]{davidoff2003elementary}. 
    \section{Overview of the proofs}
   
	The proof of \cref{lem:existandconstructofqu} is independent of the rest of this paper. We give an explicit constructive definition of the free with amalgamation operators $(q,u)\in (\mathcal A, \mathcal B, \mathrm E, ,\Phi, \ast)$ in \cref{intermediateconstruct}. This construction is inspired by the existing construction given by Hermon, Sly and Sousi \cite{hermon2020universality} (see \cref{quasi_tree_1}, \cref{quasi_tree_2} and \cref{quasi_tree}). This construction allows us to compute the moments of the limit operators given by \eqref{eq:defaandp} (see \cref{formuletrace}) and therefore the approximation of radius of the limit operators. \\
	In \cref{section:pathdecom} we start by using that for any matrix $A \in \mathrm M_N(\CC)$ and integer $\ell \ge 1$ we have 
	$$\rho(A) \le \| A^{\ell}\|^{1/\ell}$$
	to obtain the bound \eqref{eq:bound_gelfand} and we then state the main technical statement of this paper given by \cref{big2theorem}. In this statement we consider any power $\ell \ge 1$ and denoting $\mathbb P := \mathbb{P}\left(|\lambda_{2}(A_{\mathrm x})| \ge(1+ \epsilon) \rho_{n,\xxx}\right) $ we rewrite the probability we want to bound as follows
	\begin{equation*}
		\mathbb P\le \mathbb{P}\left(|\lambda_{2}(A_{\mathrm x})| \ge(1+ \epsilon) \rho_{n,\xxx}, (M,Q) \text{ is $\ell$-tangle free }\right) + \mathbb{P}\left( (M,Q) \text{ is not $\ell$-tangle free }\right).
	\end{equation*}
	The notion of a $(M,Q)$ being $\ell$-\textit{tangle-free} is introduced by \cref{deficoincid}. The couple $(M,Q)$ being $\ell$-tangle free tells us that looking at the powers of $A_{\mathrm x}$, the second probability in the sum above tends to zero with speed $N^{-c/2}$ for some $0<c<1$ provided that we have 
	$$\ell < \frac{1-c}{3 \ln(d)} \ln(N).$$ 
	The previous statement is the object of \cref{section:comput_rd_permut} and \cref{pr1}. Once we showed that our couple $(M,Q)$ is $\ell$-tangle free with high probability, we rewrite the entries of $A^{\ell}_{\mathrm x}$ on this event (see \eqref{6}).
	 Finally, we want to apply the property of random permutation given by \cite[Proposition 11]{bordenave2015new}, recalled in \cref{estimconsistent}. Owing to this proposition, we can define a new matrix $\underline{A}_{\mathrm x}^{(\ell)}$ where now all the appearance of $M$ entries are centered (see \eqref{eq:defPveclubarr}). Indeed, introducing $\underline{A}_{\mathrm x}^{(\ell)}$, \textit{rest} terms appear (see \cref{MQltf}), and 
	 the normalized sum of these terms can be neglected.
	 
In regard of \cref{MQltf} and assuming the operator norm of the rest terms are suitably controlled, it remains to upper bound the operator norm of $\underline{A}^{(\ell)}_{\mathrm x}$. This upper bound is given by \cref{boundingnormP} and its proof, constituting the main technical challenge, is the object of \cref{section:high_tr_meth}. In that section, the goal is to show that for an explicit power $\ell \sim \ln(N) $ we have \begin{equation*}
    \|\underline{A}_{\mathrm x}^{(\ell)}\| \le \left( 1 + o(1) \right)^{\ell} \rho_{n,\mathrm x}^\ell
\end{equation*}
with high probability and where $\rho_{n,\mathrm x}$ is introduced in \eqref{eq:rho(ell_0)}. We will apply Markov inequality, i.e
\begin{align*}
    \mathbb{P} \left(     \|\underline{A}_{\mathrm x}^{(\ell)}\| \le \left( 1 + o(1) \right)^{\ell} \rho_{n,\mathrm x}^\ell \right) =     \mathbb{P} \left(     \|\underline{A}_{\mathrm x}^{(\ell)} {\underline{A}_{\mathrm x}^{(\ell)}}^{\top}\|^m \le \left( 1 + o(1) \right)^{2\ell m} \rho_{n,\mathrm x}^{2m\ell} \right) \le \frac{ \mathbb{E} \left(\|\underline{A}_{\mathrm x}^{(\ell)} {\underline{A}_{\mathrm x}^{(\ell)}}^{\top}\|^m \right) }{ \left( 1 + o(1) \right)^{2\ell m} \rho_{n,\mathrm x}^{2m\ell}}
\end{align*}
and bound the expectation in the left hand side of the inequality above using trace method (see the computation in \cref{subsectionboundofP}). The fact that we restrict to the event where $(M,Q)$ is $\ell$-tangle free allows us to rewrite the trace $\Tr\left\{\left(\underline{A}^{(\ell)}{\underline{A}^{(\ell)}}^{\top}\right)^m\right\}$ as a sum over all concatenations of $2m$ $\ell$-tangle free paths with specific bound properties (see \eqref{eq:condilim}). We then define equivalent classes on the set of such $\ell$-tangle free paths and obtain the upper bound given by \eqref{eq:htmethod}. We bound separately the number of equivalent classes and, for a given equivalent class, the contribution of the permutation matrix $M$ given by $\mu(\gamma)$ and the contribution of $Q$. \cref{section:encodingeqclass} and \cref{section:contribQ} are dedicated to the proof of the bounds on respectively the cardinal of equivalent classes given by \cref{boundingWlTsap} and the contribution of the matrix $Q$ given by \cref{boundingsumq(gamma)}. The contribution of the permutation given by \cref{boundingmu(gamma)} is handled applying \cref{estimconsistent}. The strategy to encode and enumerate the number of equivalent class is analog to the various other high trace method proofs (see for instance \cite{bordenave2015new,bord2018spectral,CBeigen,Brito_Dumitriu_Harris_2022}). We also conclude the proof of \cref{boundingnormP} in \cref{section:contribQ}. \\

Finally, in \cref{section:op_norm_rest} we bound the rest terms appearing in \cref{MQltf} using the same strategy used to bound the operator norm of $\underline{A}_{\mathrm x}^{(\ell)}$. Indeed bounding the operator norms with traces we obtain \eqref{defSR_kl}, analog of \eqref{eq:htmethod} with a new definition for equivalent classes for paths. \cref{boundingnormR} and \cref{boundWlTR_k} are the analog of \cref{boundingnormP} and \cref{boundingWlTsap}. The normalized sum in \cref{MQltf} makes the upper bound given in \cref{boundingnormR} less challenging (see \eqref{eq:normalization}). We gather all these results in \cref{section:proofbigth} to conclude the proof of \cref{big2theorem}.
	\section{Intermediate construction}\label{intermediateconstruct}
	
	We give here a constructive definition of the operator valued probability space $(\mathcal{A},\mathcal{B},\mathrm{E},\Phi)$ and the operators $(q,u) \in \mathcal{A}$ which verifies the first statement of \cref{bigtheorem}, i.e.
	\begin{itemize}
		\item $q$ has, asymptotically, same $\ast$-distribution as $Q$,
		\item The operator $u$ is unitary (i.e. $uu^\ast = u^\ast u = 1$) and $q$ and $u$ are free with amalgamation over $\mathcal{B}$.
	\end{itemize}
	In a second time, we study the $\ast$-distribution of $\mathrm{b}_{\sss}=u+q$ and $\mathrm{b}_{\ccc} = (1-r)u +rq$ and give an approximation of the spectral radius of $\mathrm{b}_{\xxx}$ for $\xxx \in \{ \sss, \ccc\}$.
	\subsection{Construction of $q$ and $u$ as adjacency operators of graphs}\label{graphpov}
	The construction given here is inspired by the quasi-tree construction described by Hermon, Sly and Sousi \cite{hermon2020universality}. We will consider $q$ and $u$ as adjacency operators of explicit graphs, meaning that we will give explicit writings of these operators allowing us to then compute $a$ and $p$ moments. One has to consider the construction of these graphs as follows. First, we take infinite and countable copies of the vertex set $V=[N]$ on which the adjacency operator $Q$ is defined. Then we consider each copy as a vertex and we put a tree structure on the set of copies. The tree is regular and the degree depends on the dimension $N$ of  the operator $Q$. More precisely, we consider one of the copies as the origin (denoted by $H_{\emptyset}$ in the \cref{quasi_tree_1}). We consider that the origin has $N$ descendants which are $N$ other copies of the vertex set $[N]$. To each vertex $x$ of the origin we assign uniformly at random one of the $N$ descendant copies. In that descendant, we take uniformly at random one of its vertex $x^\prime$ to receive the ascending edge. The edge $(x,x^\prime)$ is represented in blue in \cref{quasi_tree_1} and the adjacency operator that will correspond to the graph defined by the blue edges (after constructing them all) is $u$.  We repeat this operation for each descendant, taking at random the vertex set copies among the copies never seen before, and so on. The following \cref{quasi_tree_1} shows an example for $N=3$.
	\begin{figure}[H]
		\centering
		\includegraphics[scale=0.4]{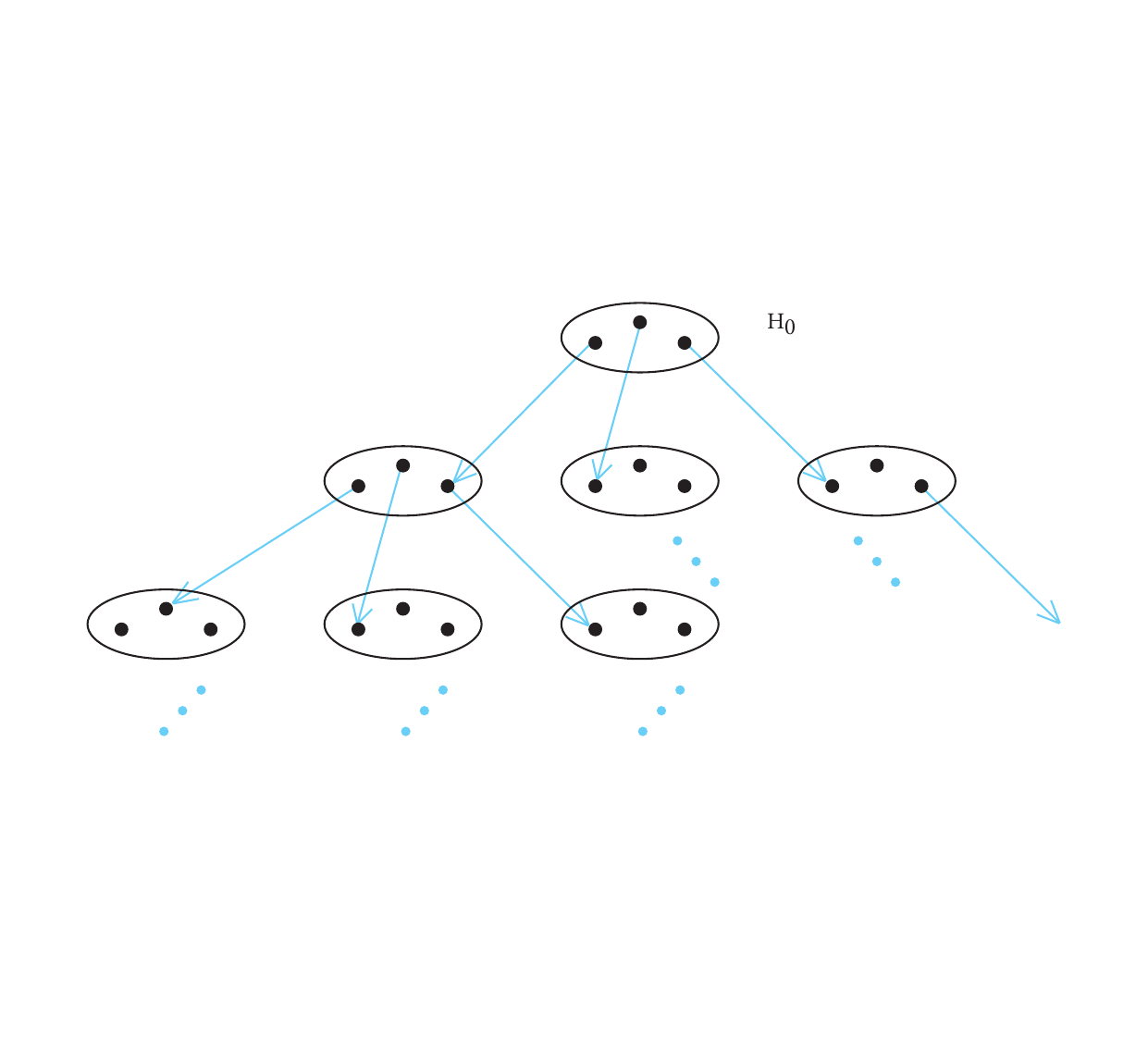}
		\caption{Quasi-tree, first step where $N=3$. In blue the first step of construction of the operator $u$.}
		\label{quasi_tree_1}
	\end{figure}
	We give simultaneously the construction $u^{\ast}$, inverse of the unit operator $u$. To each blue edge we connect an inverse red edge, which connects it to its antecedent. We also construct the symmetric graph above the origin, where now the red edges corresponding to $u^{\ast}$ are drawn first and can be seen as descendants, and the blue ones corresponding to $u$ are their inverse (see \cref{quasi_tree_2}).
	\begin{figure}[H]
		\centering
		\includegraphics[scale=0.4]{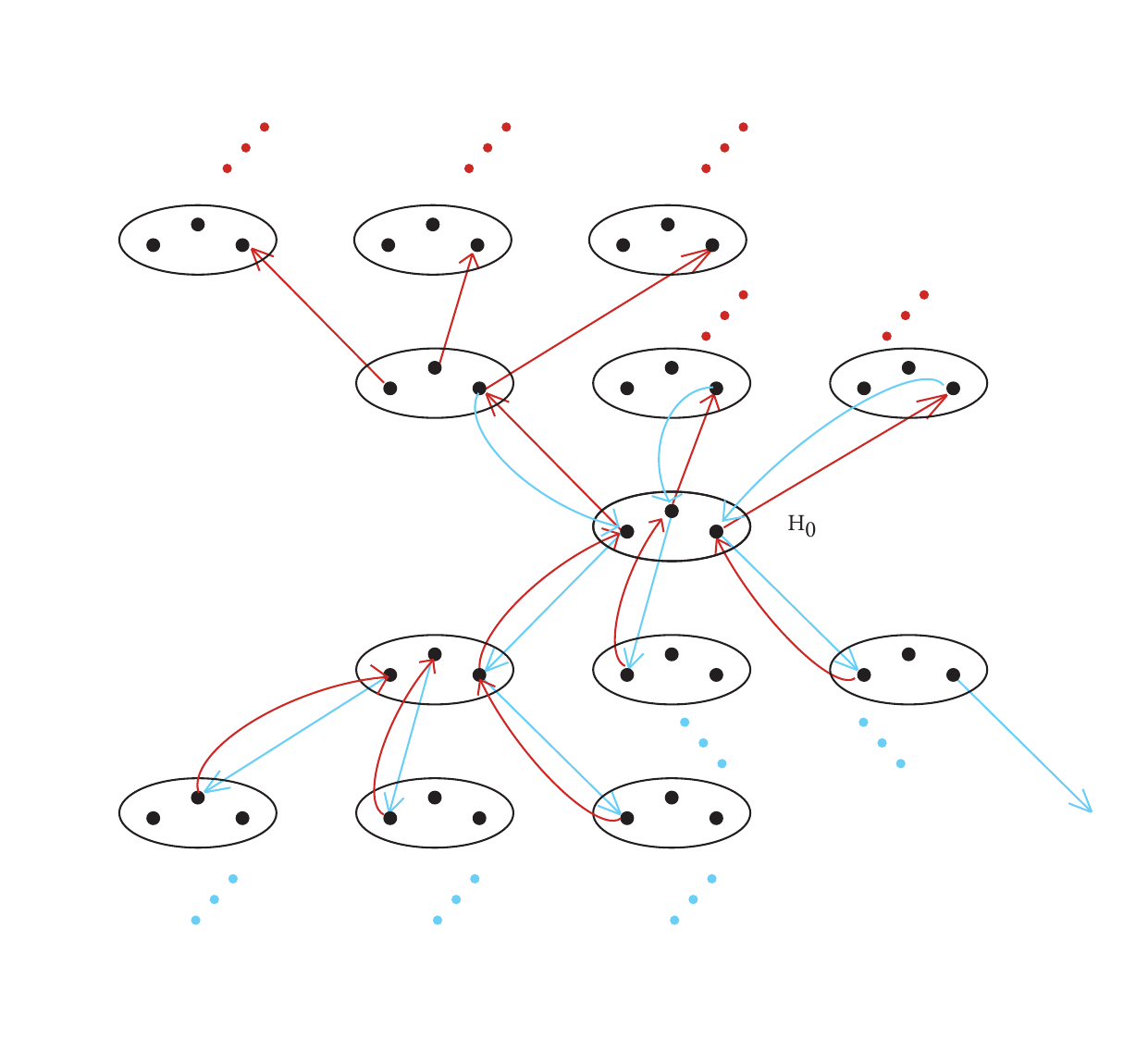}
		\caption{Quasi-tree, second step. Construction of $u^\ast$ in red as the inverse of $u$ and symmetrizing the construction inverting $u$ and $u^\ast$ roles.}
		\label{quasi_tree_2}
	\end{figure}
	Finally it remains to place on each set of vertices the edges corresponding to the graph of $Q$ (in \cref{quasi_tree} we considered $Q= (1,1,1)^{\top}\cdot(1,1,1)$).
	\begin{figure}[H]
		\centering
		\includegraphics[scale=0.45]{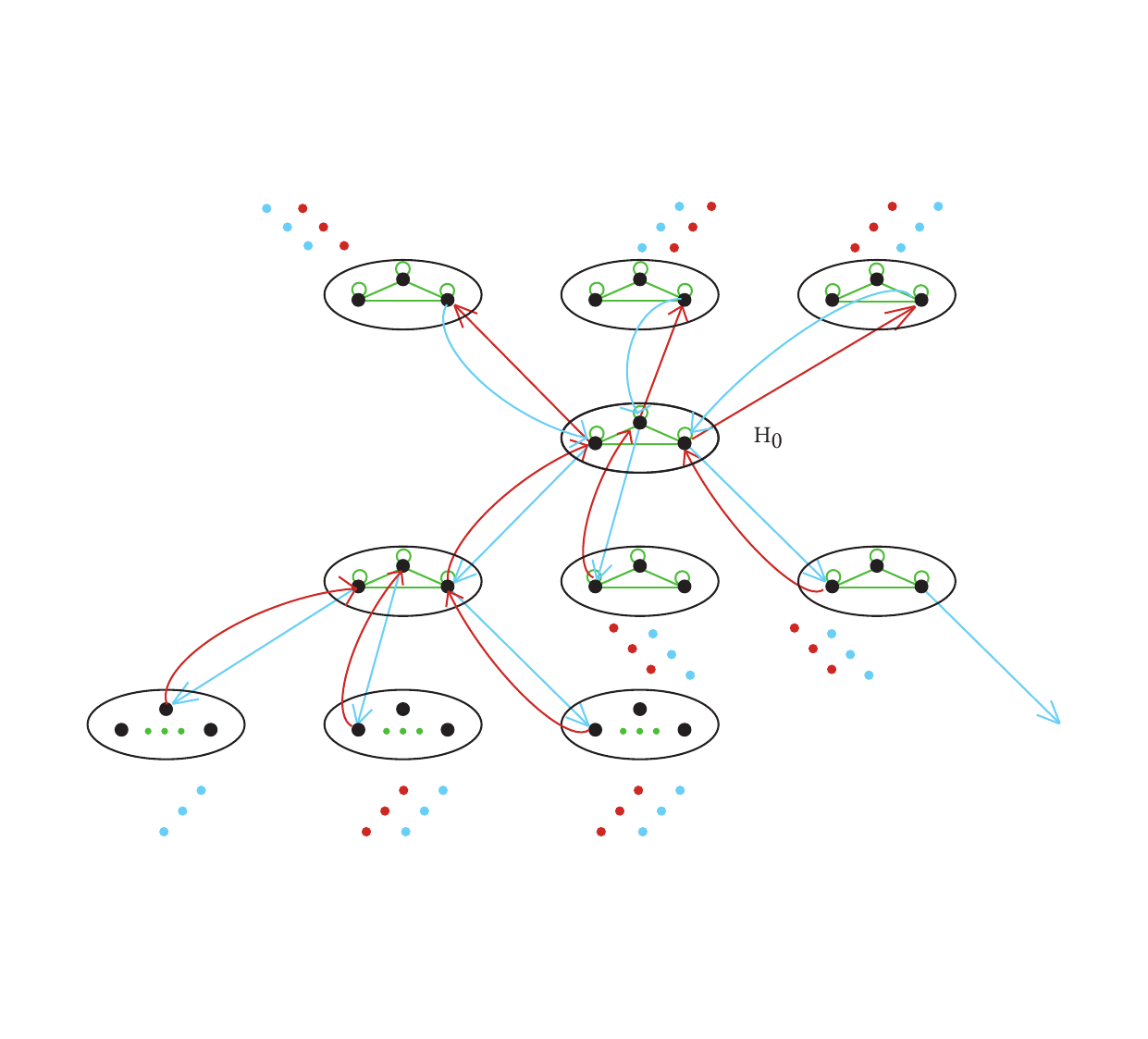}
		\caption{Quasi-tree final step. Adding on each vertex-set of the quasi-tree structure the edges defined by the operator $Q$.}
		\label{quasi_tree}
	\end{figure}
	The operators $q$ and $u$ we are looking for are the adjacency operators of the graphs described above. For instance, the operator $a = u+q$ is the adjacency operator of the graph where we considered the blue and green edges of \cref{quasi_tree}.
	\subsection{Construction of the underlying Hilbert space}
	To make our Hilbert space explicit, we first consider the following vertex set
	$$\mathcal{S} := \{(k_0,\epsilon_1,k_1,\epsilon_2,k_2,...,\epsilon_{n-1},k_{n-1},\epsilon_n) ~~; n\ge 1~~; \forall t\le n,~~ \epsilon_t \in \{+,-\} \text{ and } k_{t}\in[N] \}.$$
	Let $\{ H_{\emptyset} \}\cup \{ H_s \}_{s \in \mathcal{S}}$ be independent copies of $\mathbb{C}^{N}$. With respect to the previous \cref{graphpov}, $\mathcal{S}$ is the index set for the countable and infinite copies of $[N]$ considered as $Q$ vertex set. One can think of $n$ as the future number of generations from the origin and $\epsilon_t$ as either descending from generation $t$ to generation $t+1$ taking $u$ or $u^{\ast}$. For $s:=(k_0,\epsilon_1,k_1,...,k_{n-1},\epsilon_n)\in \mathcal{S}$, we set the Hilbert space
	$$\mathrm{H}_s := H_{\emptyset} \otimes H_{(k_0,\epsilon_1)} \otimes H_{(k_0,\epsilon_1,k_1,\epsilon_2)} \otimes \cdots \otimes H_{s},$$
	and we define
	\begin{equation*}
		\mathrm{H}_{0} := \bigoplus_{s\in \mathcal{S}} \mathrm{H}_{s}.
	\end{equation*}
	We will reduce our Hilbert space by considering only the basis vector $e_1 \otimes \cdots \otimes e_n \in \mathrm{H}_{s}$ such that for all $t\le n$, $e_t$ is the $k_t$-th canonical basis vector of $H_{(k_0,\epsilon_1,...,k_{t-1},\epsilon_t)}$. From now on for $(k_0,\epsilon_1,k_{1},...,\epsilon_{n-1},k_{n-1},\epsilon_n) \in \mathcal{S}$, for $t \le n$ we consider $i_t :=(\epsilon_t,k_t)$. For any $s=(k_0,i_1,...,i_n,\epsilon_n) \in \mathcal{S}$, we denote by $e(k_0) \otimes e(i_1) \otimes\cdots \otimes e(i_n) \in \mathrm{H}_s$ the tensor product where for all $t \le n$, $e(i_t)= e(\epsilon_t,k_t)$ is the $k_t$-th canonical basis vector of $H_{(k_0,i_1,...,i_{t-1},\epsilon_t)}$.
	Explicitly we consider $\mathrm{H}_1$ the subspace of $\mathrm{H}_{0}$ which is the closure of the span of
	\begin{equation*}
		\{ e(k_0)\otimes e(i_1)\otimes \cdots \otimes e(i_n) \in \mathrm{H}_{(k_0,i_1, ...,i_{n-1}, \epsilon_n)}~; ~ s=(k_0,i_1,...,i_n,\epsilon_n) \in \mathcal{S}\}.
	\end{equation*}
	We introduce another vertex set
	$$\mathcal{V}_1:=\{0\}\cup\{(k_0,i_1,i_2,...,i_n)~;~ n\in \mathbb{N}~; ~ \forall t \le n,~ i_t :=(\epsilon_t,k_t)\in \{+,-\} \times [N]~; \text{ and } k_0 \in [N] \}.$$
	One can consider that $\mathcal{S}$ is a partition of $\mathcal{V}_1$, i.e. that for all $v := (k_0, i_1,...,i_n) \in \mathcal{V}_1$, we have $v\in s :=(k_0,i_1,i_2,...,i_{n-1},\epsilon_n) \in \mathcal{S}$. For $v := (k_0, i_1,...,i_n) \in \mathcal{V}_1$, we also denote:
	\begin{align*}
		e(v)&:= e(k_0) \otimes e(i_1) \otimes \cdots \otimes e(i_n)\\
		e(0)&:=0\\
		\epsilon(v)&:= \epsilon_n\\
		v^- &:= (k_0,i_1,...,i_{n-1}) .
	\end{align*}
	In particular, we see that $\mathrm{H}_1$ is equal to the closure of ${\Span \{ e(v)~;~ v\in \mathcal{V}_1\}}$ and this space can be seen as all possible ways to connect any $n$-th generation vertex $v= (k_0,i_1,...,i_n)\in \mathcal{V}_1$ to any descendant one $v^\prime := (k_0,i_1,...,i_n,i_{n+1})$ (note that $(v^\prime)^- = v$). So for $e=e(k_0) \otimes e(i_1) \otimes \cdots \otimes e(i_n) \in \mathrm{H}_1$ we can also denote
	$$v_{e}:= (k_0,i_1,...,i_n).$$
	Finally for $v:=(k_0,i_1,...,i_n) \in \mathcal{V}_1$ (resp. for $s:=(k_0,i_1,...,i_{n-1},\epsilon_n) \in \mathcal{S}$), we define the \textbf{length} of $v$ (resp. of $s$) by the integer $n$. We then denote it by $|v|= |s|=n$. We now construct the random Hilbert space at stake here. First consider the families of random permutations on $[N]$
	\begin{equation*}\label{radomfloors}
		(j_{+}(s))_{s \in \mathcal{S}} \text{ and } (j_{-}(s))_{s \in \mathcal{S}} \text{ iid and } \sim \mathcal{U}(\mathrm{S}_N).
	\end{equation*}
	Recall that, with respect to the construction in \cref{graphpov}, the randomness comes from the fact that at each generation, for each vertex $v$ we randomly take a descendant copy of the vertex set $[N]$ and a vertex $v^\prime$ in it to which we associate $v$. It will verify $(v^\prime)^- =v$.
	Note that for $s := (k_0,i_1,...,i_{n-1},\epsilon_n) \in \mathcal{S}$ and $ k\in [N]$, we have that $v:=(s,k)=(k_0,i_1,...,\epsilon_n,k) \in \mathcal{V}_1$.  So, keeping the same notations, from now on we denote
	
	$$j_{\epsilon}(v):= j_{\epsilon}(s)(k)$$
	where $\epsilon \in \{+,-\}$.
	We may now define the random vertex set
	\begin{equation*}
		\mathcal{V} := \{v=(k_0, i_1, ...,  i_{n}) \in \mathcal{V}_1  ~~ ; ~~ \forall \ell < n : ~ k_\ell \neq j_{-\epsilon_\ell}(k_0,i_1,...,i_{\ell-1}) \text{ or } \epsilon_{\ell+1}= \epsilon_\ell\},
	\end{equation*}
	and the random Hilbert subspace of $\mathrm{H}_1$ obtained by taking the closure of the vector space spanned by
	\begin{equation}\label{basisH}
		\Xi:=\{e=e(k) \otimes e(i_1)\otimes \cdots \otimes e(i_n) ~ ~ ; ~ v_{e} \in \mathcal{V}\}.
	\end{equation}
	We denote by $\mathrm{H}$ the Hilbert space obtained. We now explicit the Banach and operator-valued probability space structure of $\mathrm{B}(\mathrm{H}) \subset \mathcal{B}( L^2(\mathrm{H}_0))$.
	
	\begin{defi}\label{banach-op-valued-struc}
		\begin{enumerate}
			\item The space $(\mathrm{H},(\cdot,\cdot)) \subset L^2(\mathrm{H}_{0})$ is a Hilbert space where we considered the scalar product defined by
			$$\forall h,h^\prime \in \mathrm{H},~~(h,h^\prime) := \mathbb{E}[\langle h,h^\prime \rangle] ~~ \text{and}~~ \|{h}\|_2 := \sqrt{(h,h)}.$$ 
			The subordinate norm $\|\cdot\|$ on $\mathcal{B}( \mathrm{H})$ gives to $\mathcal{B}( \mathrm{H})$ a Banach algebra structure.
			\item $(\mathrm{B}(\mathrm{H}),\Phi, \ast)$ is non-commutative probability space where the trace is given by
			\begin{equation*}
				\Phi(a):=\mathbb{E}\left[\langle a e_{\alpha} , e_{\alpha} \rangle\right] 
			\end{equation*}
			for $a\in \mathrm{B}(\mathrm{H})$, where $e_{\alpha} \in H_{\emptyset}$ and $\alpha \sim \mathcal{U}([N])$ is independent from $(j_{\epsilon}(s))_{\epsilon \in \{-,+\}, s \in \mathcal{S}}$.\\
			\item The conditional expectation $\mathrm{E}$ on $\mathrm{B}(\mathrm{H})$ is the operator verifying 
			\begin{equation*}
				\langle \mathrm{E}(a) e , f \rangle = \mathds{1}( e=f) \langle ae,f \rangle
			\end{equation*}
			for all $ a \in \mathrm{B}(\mathrm{H})$ and for all $e, f \in \Xi$.
		\end{enumerate}
	\end{defi}
	We now define the operators $(q,u)$ in \cref{bigtheorem}. 
	\begin{defi}\label{def:u_q}
		We define the set $\mathcal{E}_{+} \subset \mathcal{V}^2$ as the set of edges $(v,v^{(+)})\in \mathcal{V}^2$ verifying that for all $v =(k_0,...,i_n)$, we have  $v^{(+)}=(k_0,i_1,...,i_n,(+,j_{+}(v)))$ and $k_{n}\neq  j_{-}(k_0,i_1,...,i_{n-1})$. The operator $u$ is the adjacency operator of the directed graph $\mathcal{G}_{+} = (\mathcal{V},\mathcal{E}_{+})$. It can also be defined by an explicit formula on the basis vectors spanning $\mathrm{H}$. Let $e:=e(k_0) \otimes e(i_1)\otimes \cdots e(i_n) \in \mathrm{H}$, we have
		\begin{equation*}\label{defiu}
			u(e) = \left\{
			\begin{array}{ll}
				e\otimes e(+,j_{+}(x)) & \mbox{if } \left\{\begin{array}{ll}
					&\epsilon_n=+ \\
					&\mbox{or}\\
					& k_n \neq j_{-}(k_0,i_1,...,i_{n-1})
				\end{array}
				\right.\\
				e(k_0) \otimes e(i_1)\otimes \cdots \otimes e(i_{n-1}) & \mbox{otherwise.}
			\end{array}
			\right.
		\end{equation*}
		The operator $u^\ast$ is then the adjacency operator of $\mathcal{G}_{-} = (\mathcal{V},\mathcal{E}_{-})$ where, taking the same notations for $v$, we have that $(v,v^{(-)})\in \mathcal{E}_{-}$ if $v^{(-)}=(k_0,i_1,...,i_n,(-,j_{-}(v))$ and $k_{n}\neq  j_{-}(k_0,i_1,...,i_{n-1})$.
		Using the same notation for $e \in \mathrm{H}$, $u^{*}$ can be defined symmetrically by the formula
		\begin{equation*}\label{defiu*}
			u^{*}(e) = \left\{
			\begin{array}{ll}
				e\otimes e(-,j_{-}(x)) & \mbox{if } \left\{\begin{array}{ll}
					&\epsilon_{n}=- \\
					&\mbox{or}\\
					& k_n \neq j_{+}(k_0,i_1,...,i_{n-1})
				\end{array}
				\right.\\
				e(k_0) \otimes e(i_1)\otimes \cdots \otimes e(i_{n-1}) & \mbox{otherwise.}
			\end{array}
			\right.
		\end{equation*}
		Finally we consider the isometric injection
		\begin{align*}
			\Pi &: \mathrm{M}_N(\mathbb{C}) \longrightarrow \mathrm{B}(\mathrm{H}) \\
			B &\longmapsto \left[\begin{aligned} b  : \mathrm{H} &\rightarrow \mathrm{H}\\
				e(k_0)\otimes e(i_1)\otimes \cdots \otimes e(i_n) &\mapsto e(k_0)\otimes e(i_1)\otimes \cdots \otimes B e(i_n) 
			\end{aligned}\right]
		\end{align*}
		and we set 
        \begin{equation}\label{def:q}
        {q}:= \Pi(Q_{\perp}).
        \end{equation}
         We now consider the operator valued probability space $(\mathcal{A}, \mathcal{B},\mathrm{E},\Phi,\ast)$ where $\mathcal{A}$ is the subalgebra of $\mathrm{B}(\mathrm{H})$ spanned by $u$, $u^\ast$, $\mathcal{M} := \Pi(\mathrm{M}_N(\CC))$ and $\mathcal{B}:= \mathrm{E}(\mathcal{A})$. Finally we denote $\mathcal{U}$ the sub algebra of $\mathcal{A}$ spanned by $u$ and $u^\ast$.
	\end{defi}
	
	\begin{rem}
		We could have defined both $u$ and $u^{*}:=u^{-1}$ as follows
		\begin{equation*}
			u^{\epsilon}(e) = \left\{
			\begin{array}{ll}
				e\otimes e(\epsilon,j_{\epsilon}(x)) & \mbox{if } \left\{\begin{array}{ll}
					&\epsilon_n=\epsilon\\
					&\mbox{or}\\
					& k_n \neq j_{-\epsilon}(k_0,i_1,...,i_{n-1})
				\end{array}
				\right.\\
				e(k_0) \otimes e(i_1)\otimes \cdots \otimes e(i_{n-1}) & \mbox{otherwise.}
			\end{array}
			\right.
		\end{equation*}
	\end{rem}
	For all $B \in \mathrm{M}_N (\mathbb{C})$, we have
	\begin{align*}
		\Phi(b)&= \mathbb{E}[\langle b e_\alpha,e_\alpha \rangle]=\mathbb{E}[\langle B e_\alpha,e_\alpha \rangle] = \frac{1}{N}\Tr(B).
	\end{align*}
	Applying the above to $B=P(Q,Q^\ast)$ for any $P\in \CC\langle X, X^\ast\rangle $ we obtain that $Q$ and $q$ have the same $\ast$-distribution.
	
	\subsection{Freeness with amalgamation over $\mathcal{B}$ of $u$ and $\mathcal{M}$}
	
	The freeness with amalgamation of $q$ and $u$ is a direct consequence of the freeness with amalgamation of $u$ (i.e. the algebra spanned by $u$ and $u^{\ast}$) and the algebra $\mathcal{M}$. It boils down to checking that for all $n \ge 1$ and $b_1,...,b_n \in \mathcal{M}$ such that $\mathrm{E}(b_i)=0$, if $\alpha_1,...,\alpha_{n+1} \in \mathbb{Z}$ are such that for all $i \notin \{ 1, n+1\}$ we have $\alpha_i \neq 0$ then we have
	
	$$\mathrm{E}\left[ u^{\alpha_1}b_1 \cdots u^{\alpha_n}b_n u^{\alpha_{n+1}}\right]=0.$$
	It is equivalent to check that for all $e \in \Xi$ (see \eqref{basisH}) we have
	\begin{equation}\label{freenessQu}
		\langle u^{\alpha_1}b_1 \cdots u^{\alpha_n}b_n u^{\alpha_{n+1}}e,e\rangle=0.
	\end{equation}
	This is the main result of the following lemma.
	\begin{lem}\label{chemin}
		The following assertions hold.
		\begin{enumerate}
			\item \label{point1}For all $v= (k,i_1,...,i_p)\in \mathcal{V}$ we have
			\begin{equation*}
				e(v)= u^{\alpha_{m}} b_{m} u^{\alpha_{m-1}}\cdots u^{\alpha_1} b_1 u^{\alpha_{0}} e(k)
			\end{equation*}
			for some $\alpha_0,...,\alpha_{m} \in \mathbb{Z}$ and $b_1=\Pi({B}_1),...,b_m= \Pi({B}_m) \in \mathcal{M}$ where $B_i$ are elementary matrices with zeros on the diagonal.
			\item\label{point2}
			Conversely, for all $k\in [N]$, for all $b_1=\Pi({B}_1),...,b_m= \Pi({B}_m) \in \mathcal{B}_{1}$ where $B_i$ are elementary matrices with zeros on the diagonal and for all $\alpha_0,...,\alpha_m \in \mathbb{Z}$ such that $\alpha_i \neq 0$ for $i \notin\{ 0,m\}$, there exists $v \in \mathcal{V}$ such that
			\begin{equation}
				u^{\alpha_{m}} b_{m} u^{\alpha_{m-1}}\cdots u^{\alpha_1} b_1 u^{\alpha_{0}} e(k) = \# e(v) \text{ and } ~~~ |v| = \sum_{i=1}^{m} |\alpha_i|
			\end{equation}
			with $\# \in \{0,1\}$. The vertex $v$ verifies $v= (v^{-}, (\epsilon(\alpha_m),j_{\epsilon(\alpha_m)}(v^{-})))$ for some $v^{-} \in \mathcal{V}$.
		\end{enumerate}
		In particular, $u$ and $\mathcal{M}$ are free with amalgamation over $\mathcal{B}$.
	\end{lem}
	
	Point \ref{point2} of the previous lemma says that the length of $s \in \mathcal{S}$ is the distance between $s$ and the root $\{\emptyset\}$. Besides one can notice that the operator $b= \Pi(B)$, with $B$ an elementary matrix with zeros on the diagonal is the adjacency operator of the graph with only one edge in each copie of $[N]$.\\
	To show specifically the second part of the previous lemma we will use the following. 
	\begin{lem}\label{itermediatelem}
		Let $N\in \mathbb{N}$, $v:=(k,i_1,...,i_p) \in \mathcal{V}$ and $\alpha \in \mathbb{Z}$. We set $\epsilon :=\sign(\alpha)$ and $i_p := (\epsilon_p,k_p)$. We assume that either $p=0$ (i.e. $v=(k)$), or $\epsilon_p = \epsilon$ or $k_p \neq j_{-\epsilon}(k,i_1,...,i_{p-1})$. We set the notations
		\begin{align*}
			v_{\epsilon}& := (v,(\epsilon, j_{\epsilon}(v))) \in \mathcal{V}\\
			\text{ for }  1 \le t \le |\alpha|,~ v_{\epsilon (t+1)} &:= (v_{\epsilon t},(\epsilon, j_{\epsilon}(v_{\epsilon t}))) \in \mathcal{V}.
		\end{align*}
		Then $|v_{\alpha}| = |v| + |\alpha|$ and verifies $u^{\alpha} e(v)= e(v_{\alpha})$.
	\end{lem}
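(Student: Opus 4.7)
The plan is to argue by induction on $|\alpha|$, with the hypothesis on $v$ being used only at the first step and the hypothesis being automatically maintained at all subsequent steps because the last coordinate of the newly extended vertex will have sign exactly $\epsilon$.

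For the base case $|\alpha|=1$, with $\alpha = \epsilon \in \{+,-\}$, I would simply unpack the definition of $u^\epsilon$ given in \eqref{defiu}, \eqref{defiu*} (or the unified formula in the remark following them) applied to $e=e(v)$. The hypothesis of the lemma matches exactly the first branch of that piecewise definition: either $q=0$ (so $e(v)\in H_\emptyset$ and $u^\epsilon$ appends $e(\epsilon, j_\epsilon(v))$), or $\epsilon_q=\epsilon$, or $k_q\neq j_{-\epsilon}(v^-)$. In all three cases, one directly reads off $u^\epsilon e(v)= e(v)\otimes e(\epsilon, j_\epsilon(v))=e(v_\epsilon)$, and the length grows by $1$. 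I would also check at this point that $v_\epsilon$ indeed belongs to $\mathcal{V}$ by verifying the constraint at index $q$ (which is the only possibly new index at risk): either $q=0$ and there is nothing to check, or the third-branch hypothesis $k_q\neq j_{-\epsilon_q}(v^-)$ holds, or $\epsilon_{q+1}=\epsilon=\epsilon_q$ holds by assumption.

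For the inductive step, assume the conclusion for a fixed $\alpha$ with $|\alpha|\ge 1$ and consider $\alpha'=\alpha+\epsilon$ (so $|\alpha'|=|\alpha|+1$). By induction $u^\alpha e(v) = e(v_\alpha)$ with $|v_\alpha|=|v|+|\alpha|$. I then need to apply $u^\epsilon$ to $e(v_\alpha)$; to quote the base case I just need to verify the hypothesis of the lemma for $v_\alpha$ with respect to the shift $\epsilon$. Since $|\alpha|\ge 1$, the last coordinate of $v_\alpha$ is $(\epsilon, j_\epsilon(v_{\alpha-\epsilon}))$ by construction, so its sign is precisely $\epsilon$. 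This places us in the second branch of the lemma's hypothesis, so the base case applies and gives $u^\epsilon e(v_\alpha)=e((v_\alpha,(\epsilon,j_\epsilon(v_\alpha))))=e(v_{\alpha'})$. Membership of the extended vertex in $\mathcal{V}$ is automatic because the constraint at the new-but-one index is $\epsilon_{|v_\alpha|+1}=\epsilon=\epsilon_{|v_\alpha|}$, while all earlier constraints were already verified for $v_\alpha$. The length identity $|v_{\alpha'}|=|v|+|\alpha'|$ follows immediately.

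The only real subtlety, and the one that needs to be isolated cleanly, is that the definition of $u^\epsilon$ is piecewise: if the hypothesis on $v$ were violated, then $u^\epsilon$ would \emph{retract} the last tensor factor rather than extend, and the identity $u^\alpha e(v)=e(v_\alpha)$ would fail. So the entire content of the lemma is that the initial hypothesis guarantees we remain in the "extending" branch at step $1$, and then the self-reinforcing property of the extension (the new last sign equals $\epsilon$) keeps us in that branch forever. I do not anticipate any other obstacle; once this is recognized, the bookkeeping on $\mathcal{V}$-membership and on length is immediate from the definitions.
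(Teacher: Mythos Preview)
Your proof is correct and follows essentially the same inductive strategy as the paper: both argue on $|\alpha|$, use the lemma's hypothesis to land in the ``extending'' branch of $u^\epsilon$ at the first step, and observe that the newly appended sign $\epsilon$ automatically preserves that branch thereafter. The only cosmetic difference is that the paper writes $u^\alpha = u^{\alpha-\epsilon}\,u^\epsilon$ (peel one factor first, then apply the inductive hypothesis to the longer word on the extended vertex), whereas you write $u^{\alpha+\epsilon}=u^\epsilon\,u^\alpha$; your version is also slightly more explicit about verifying $v_\alpha\in\mathcal{V}$, which the paper leaves implicit.
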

	
	\begin{proof}
		We denote $e=e(v)$ and proceed by induction on $|\alpha|$. If $|\alpha|=1$ we have according to the definition of $u^{\epsilon}$ that $u^{\epsilon}(e) = e\otimes e(\epsilon, j_{\epsilon}(v))= e(v_{\epsilon})$. Assuming now $|\alpha| \ge 2$ we have
		$$u^{\alpha}(e)= u^{\alpha -\epsilon} (u^{\epsilon}(e)).$$
		We denote $\tilde{\alpha}= \alpha-\epsilon$. Since $u^{\epsilon}(e)=e \otimes e(\epsilon, j_{\epsilon}(v))$, $\sign(\alpha-\epsilon) = \epsilon$ and $|\tilde{\alpha}|= |\alpha|-1 < |\alpha|$, one can apply the induction hypothesis on $\tilde{\alpha}$ and conclude.
	\end{proof}
	We can now prove \cref{chemin}.
	\begin{proof}[Proof of \cref{chemin}]
		For the first point we proceed by induction over $|v| \ge 1$. Let $v:= (k,i_{1})= (k,(\epsilon,k_1))$. Either $k_1 = j_{\epsilon}(k)$, then $e(v)=e(k)\otimes e(\epsilon,j_{\epsilon}(k))= u^{\epsilon} (e(k))$, or $j_{\epsilon}(k) \neq k_1$ and then denoting $B := E_{j_{\epsilon}(k),k_1}$ and $b = \Pi(B)$ we have that $b u^{\epsilon} (e)= b (e(k) \otimes e(j_{\epsilon}(k),\epsilon)) = e(k) \otimes B e( \epsilon,j_{\epsilon}(k)) = e(v)$. We now consider $v= (k,i_1,...,i_{p+1})$ for some $p \ge 1$. For $t < p+1$ we consider $v_{t} := (k,i_1,...,i_t)$ and $e_{t}:= e(v_t)$. Since $e=e(v) \in \Xi$, we have $v_t \in \mathcal{V}$ for all $t < p+1$. Let $\alpha_{0},...,\alpha_{m} \in \mathbb{Z}$ and $b_1,...,b_m$ as described in \cref{chemin} and such that
		$$e_p  = u^{\alpha_{m}} b_{m} u^{\alpha_{m-1}}\cdots u^{\alpha_1} b_1 u^{\alpha_{0}} e(k).$$
		The condition $v \in \mathcal{V}$ imposes that either $k_p \neq j_{-\epsilon_p}(v_{p-1})$ or $\epsilon_p = \epsilon_{p+1}$. Therefore we have
		$$u^{\epsilon_{p+1}} (e_p)=e_p \otimes e(\epsilon_{p+1},j_{\epsilon_{p+1}}(v_p)).$$
		We replace $\alpha_{m}$ by $\alpha_{m} + \epsilon_{p+1}$. If $k_{p+1} = j_{\epsilon_{p+1}} (v_p)$ we have the result. If $k_{p+1} \neq j_{\epsilon_{p+1}} (v_p)$ then we consider $B_{m+1} := E_{k_{p+1},j_{\epsilon_{p+1}}(v_p)}$ and $b_{m+1} := \Pi(B_{m+1})$ and we have
		$$b_{m+1} u^{\alpha_{m} + \epsilon_{p+1}} b_m \cdots u^{\alpha_1}b_1u^{\alpha_0} e(k) = e(v).$$
		We now prove the second assertion and we start with the case where we impose also $\alpha_m \neq 0$. We fix $k \in [N]$ and proceed by induction, this time on $p:= \sum_{i=0}^m |\alpha_i|$. For $p=1$ we necessarily have that $m=1$ and $\alpha_1=\alpha \in \{+1,-1\}$. For $B= E_{ij}$ an elementary matrix, we have either $j\neq k$ and then $u^{\alpha}b e(k)= be(k)=0$, or $j=k$ and then $u^{\alpha}be(k) = u^{\alpha}e(i)= e(i) \otimes e(\alpha, j_{\alpha}(i))$. We now consider $p\ge 2$. If $m=1$ and $|\alpha_1|=m$ then we conclude with \cref{itermediatelem}. Otherwise we set
		$$f:= u^{\alpha_{m-1}} b_{m-1} \cdots u^{\alpha_1} b_1 u^{\alpha_0} e(k)$$
		and $\epsilon_i:= \sign(\alpha_i)$.
		Since $\alpha_m \neq 0$ then by induction we have $v \in \mathcal{V}$ such that we have $f= e(v) = e(v^{-})\otimes e(\epsilon_{m-1},j_{\epsilon_{m-1}}(v^{-}))$. We consider $E_{ij}$ the elementary matrix such that $b_m=\Pi(E_{ij})$. If $j_{\epsilon_{m-1}}(v^{-}) \neq j$ then $b_{m}f = 0$ and therefore $e=0$. Otherwise we have that $b_m f = e(v^{-})\otimes e(\epsilon_{m-1},i)$ with $i \neq j_{\epsilon_{m-1}}(v^{-})$. Applying \cref{itermediatelem} to $e=b_m f$ and $\alpha =\alpha_m$ we obtain the expected result. Now if $\alpha_m = 0$, knowing that $\alpha_{m-1} \neq0 $ we find $v \in \mathcal{V}$ of length $p = \sum |\alpha_i|$ such that we have $u^{\alpha_{m-1}}b_{m-1} \cdots u^{\alpha_0} e(k) = \epsilon e(v)$. Applying $b_m$ to $e(v)$ we either find $0$ or change the last coordinate $i_p$ of $v$.\\
		We now prove the freeness with amalgamation of $u$ and $\mathcal{M}$, i.e. that \eqref{freenessQu} holds true for all $e \in \Xi$ and for all $b_i=\Pi(B_i)$ where $B_i \in \mathrm{M}_N(\mathbb{C})$ and $\mathrm{E}(B_i)=0$. Owing to the first assertion of \cref{chemin}, we only have to check \eqref{freenessQu} holds for $e\in H_{\emptyset}$ basis vector. Finally by linearity in $b_i$ for all $i \le n$ it is sufficient to consider that $B_i$ is an elementary matrix $E_{k,\ell}$ of $\mathrm{M}_N(\mathbb{C})$ with zeros on the diagonal. Now let $k\in [N]$, $m\ge 2$, $\alpha_0,...,\alpha_m \in \mathbb{Z}$ such that $\alpha_i \neq0$ for all $i\notin \{ 0,m\} $ and $b_1,...,b_m$ such that $b_i = \Pi(B_i)$ where $B_i$ elementary matrix. Owing to the second assertion in \cref{chemin}, we have $f=u^{\alpha_m}b_m\cdots u^{\alpha_1}b_1 u^{\alpha_0} e(k)= \epsilon e(v)$ for some $v \in \mathcal{V}$ such that $|v| \ge 1$. In particular $f$ is indeed orthogonal to $e$. 
	\end{proof}
    All the ingredients are now gathered to prove \cref{lem:existandconstructofqu}.
	\begin{proof}[Proof of \cref{lem:existandconstructofqu}]
    From \cref{banach-op-valued-struc} and \eqref{def:q} from \cref{def:u_q}, for fixed non-all zero integers $\ell_0,...,\ell_{p}$ we have
    \begin{align*}
        \Tr_N(Q^{\ell_0} {Q^\ast}^{\ell_1} \cdots Q^{\ell_{p-1}}{Q^{\ast}}^{\ell_p}) - \Phi({q}^{\ell_0} {{q}^\ast}^{\ell_1}\cdots {q}^{\ell_{p-1}}{{q}^\ast}^{\ell_p}) = \frac{\delta^{\sum_t \ell_t}}{N} \underset{N \rightarrow \infty}{\rightarrow } 0.
    \end{align*}
    Besides, as mentioned above the freeness with amalgamation is a direct consequence of \eqref{freenessQu}.
    \end{proof}
	\subsection{Spectral radius of the limit operator}
	
	For $n \in \mathbb{N}$ and $T \in [n] $, we define the set
	\begin{equation*}\label{eq:defL(T,n)}
		\mathcal{L}(T,n):=\left\{\vec{n}=(n_1,...,n_T) \in \{0,...,n\}^{T}:~\sum_{t=1}^{T} n_t + T - 1 = n\right\}.
	\end{equation*} 
	We first express $\Phi[\mathrm{b}_{\xxx}^{n_1}(\mathrm{b}_{\xxx}^{*})^{n_2}]$ for any integer $n_1,n_2 \ge 0$ and $\mathrm{b}_{\xxx}$ defined by \eqref{eq:defaandp}.
	\begin{prop}\label{formuletrace}
		For all $n_1, n_2 \in \mathbb{N}$ we have
		\begin{equation*}
			\Phi[\mathrm{b}_{\sss}^{n_1}(\mathrm{b}_{\sss}^{*})^{n_2}]=\sum_{T=1}^{n_1 \land n_2}\sum_{\substack{\vec{n}\in \mathcal{L}(T,n_1)\\ ~\vec{n}^{\prime} \in \mathcal{L}(T,n_2)~}} \prod_{t=1}^{T} \Tr_{N}\left( Q^{n_t} (Q^{*})^{n_t^{\prime}}\right)
		\end{equation*}
		and
		\begin{equation*}
			\Phi[\mathrm{b}_{\ccc}^{n_1}(\mathrm{b}_{\ccc}^{*})^{n_2}]=\sum_{T=1}^{n_1 \land n_2}\sum_{\substack{\vec{n}\in \mathcal{L}(T,n_1)\\ ~\vec{n}^{\prime} \in \mathcal{L}(T,n_2)~}} (1-r)^{2(T-1)} \prod_{t=1}^{T} \Tr_{N}\left( \{rQ\}^{n_t} \{rQ^{*}\}^{n_t^{\prime}}\right).
		\end{equation*}
	\end{prop}
	For $t \ge 1$, we consider the sigma algebra
	$$\mathcal{F}_t := \sigma \left(\{\alpha\},\{ j_{\epsilon}(s)~; s\in \mathcal{S},~ |s|\le t-1,~ \epsilon \in \{+,-\} \}\right).$$
	Then \cref{formuletrace} is a direct consequence of the following lemma. 
	\begin{lem}
		Let $T \ge 1$ be an integer and $b_1,c_1,...,b_T, c_T \in \mathcal{M}$ be images of $N\times N$ complex matrices by $\Pi$. 
		\begin{enumerate}
			\item \label{item1elem} If for all $t \le T$, $b_i$ and $c_i$ are images of elementary matrices, we set
			$$e_t := b_T u b_{T-1} \cdots b_2 u b_1 e(\alpha)$$
			and we have that $e_t$ is a random vector such that there exists a unique $w_t \in \mathcal{V}$ verifying either  $|w_t|=t$ and  $\epsilon(w_t)=+$ or $w_t=0$. Also $w_t$ is $\mathcal{F}_t$-measurable.
			\item \label{item2genemat} We have
			$$\mathbb{E} [\langle b_1 u^{*} b_2 \cdots u^* b_{T-1}u^* b_T c_T u \cdots u c_1 e(\alpha), e(\alpha) \rangle] = \prod_{t=1}^{T} \Tr_N (b_t c_t) $$
			where we identify in notations the matrices $b_t$ and $c_t$ to theirs images through $\Pi$.
		\end{enumerate}
	\end{lem}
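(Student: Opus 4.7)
\smallskip
\noindent\textbf{Proof plan.} The two parts are handled in turn; both rest on the geometric interpretation of $u, u^*, b_i$ on the quasi-tree and on Lemmas~\ref{chemin} and~\ref{itermediatelem}.

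For Part~1, the plan is to induct on $t$, writing $b_i = \Pi_N(E_{x_i, y_i})$. The base case is direct: $b_1 e(\alpha)$ is either $0$ or the basis vector $e(x_1) \in H_\emptyset$. For the inductive step, the hypothesis yields that $e_{t-1}$ is either $0$ (and then $e_t = 0$) or a basis vector $e(w_{t-1})$ with $\epsilon(w_{t-1}) = +$. The $\epsilon$-condition on $w_{t-1}$ makes the first branch in the definition of $u$ applicable, so (equivalently by Lemma~\ref{itermediatelem} with $\alpha = +1$) we obtain $u\,e(w_{t-1}) = e(w_{t-1}, (+, j_+(w_{t-1})))$, whose length is one greater than $|w_{t-1}|$ and whose terminal sign is $+$. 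Finally, the elementary matrix $b_t$ either kills this vector or simply rewrites its last coordinate from $j_+(w_{t-1})$ to $x_t$. Uniqueness of $w_t$ is automatic from the construction, and $\mathcal{F}_t$-measurability holds because only values $j_+(s)$ for $|s|\le t-1$ appear.

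For Part~2, I first apply the adjoint identity
\[
\langle b_1 u^* \cdots u^* b_T c_T u \cdots u c_1 e(\alpha), e(\alpha)\rangle = \langle c_T u \cdots u c_1 e(\alpha),\; b_T^* u \cdots u b_1^* e(\alpha)\rangle,
\]
and then reduce by multilinearity in each $b_t, c_t$ to elementary matrices $B_t = E_{x_t, y_t}$, $C_t = E_{x'_t, y'_t}$. By Part~1, the vectors $f := c_T u \cdots u c_1 e(\alpha)$ and $g := b_T^* u \cdots u b_1^* e(\alpha)$ are each zero or basis vectors $e(w_f)$, $e(w_g)$ of equal length and terminal sign, so their scalar product is $\mathds{1}[w_f = w_g \neq 0]$. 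I then induct on $T$. The base case $T=1$ is the direct calculation $\mathbb{E}[\langle b_1 c_1 e(\alpha), e(\alpha)\rangle] = \tfrac{1}{N}\Tr(B_1 C_1) = \Tr_N(b_1 c_1)$. For the inductive step, peel off the innermost operators acting on $e(\alpha)\in H_\emptyset$: non-vanishing and coincidence of $f$ and $g$ at level $0$ force $\alpha = y'_1 = x_1$ and $x'_1 = y_1$, and averaging over the uniform $\alpha$ produces the factor $\tfrac{1}{N}\delta_{x_1 y'_1}\delta_{y_1 x'_1} = \Tr_N(B_1 C_1)$. Conditionally on these constraints, both $f$ and $g$ sit at the common new vertex $e(x'_1) = e(y_1)$, and subsequent applications of $u$ on each side consult the same permutation values $j_+(s)$ at vertices $s$ descending from $(x'_1)$.

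The main obstacle is a rigorous root-translation argument for the inductive step: I must verify that conditionally on the level-$0$ constraint, the family $(j_\pm(s))_{|s|\ge 1}$ restricted to vertices descending from $(x'_1)$ is distributionally equivalent to a freshly initialized quasi-tree, so that the remaining expectation equals exactly the $(T-1)$-instance applied to $b_2,\ldots,b_T$ and $c_2,\ldots,c_T$. Granted this, the inductive hypothesis gives $\prod_{t=2}^T \Tr_N(b_t c_t)$, and multiplication by the level-$0$ factor $\Tr_N(b_1 c_1)$ yields the claimed product. The translation claim itself is a consequence of the i.i.d.\ structure of $(j_\pm(s))_{s \in \mathcal{S}}$ declared in \eqref{radomfloors}, but requires a careful re-indexing of the labels in $\mathcal{S}$.
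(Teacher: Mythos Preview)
Your treatment of Part~1 matches the paper's proof: both induct on $t$, use the $+$-sign condition to guarantee that $u$ acts by extension (first branch of \eqref{defiu}), and read off $\mathcal{F}_t$-measurability from the fact that only $j_+(s)$ with $|s|\le t-1$ are consulted.

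For Part~2 you take a genuinely different route. You peel the product \emph{from the root outward}: you average over $\alpha$ to extract $\Tr_N(b_1c_1)$ first, and then invoke a root-translation argument to identify the remaining expectation with the $(T-1)$-instance. The paper instead peels \emph{from the top inward}: it writes $I_T=\langle u^*b_Tc_T\,u\,e(w_{T-1}),e(w'_{T-1})\rangle$, conditions on the filtration $\mathcal{F}_{T-1}$, and uses that $j_+(w)$ for $|w|=T-1$ is independent of $\mathcal{F}_{T-1}$ to pull out $\Tr_N(b_Tc_T)$; the remaining factor $\mathbb{E}\langle e(w_{T-1}),e(w'_{T-1})\rangle$ is then exactly a $(T-1)$-instance of the same identity, with no re-rooting needed. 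Your approach is valid---the i.i.d.\ structure of $(j_\pm(s))_{s\in\mathcal{S}}$ does make the subtree below $(x'_1,+,j_+(x'_1))$ a fresh copy, with $j_+(x'_1)$ playing the role of a new uniform $\alpha$---but, as you yourself flag, the re-indexing is a genuine bookkeeping burden you have not carried out. The paper's outside-in argument buys exactly this: the filtration $(\mathcal{F}_t)$ is already adapted to the construction, so the conditional-expectation step is a one-line application of independence rather than a distributional bijection between subtrees. If you want to complete your version cleanly, the simplest fix is to switch to the paper's direction of induction and condition on $\mathcal{F}_{T-1}$ instead of averaging over $\alpha$.
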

	
	\begin{proof}
		We proceed by induction on $T \ge 2$. For $T=2$ we have $be(\alpha) = e(i) \mathds{1}(\{\alpha = j\})$ for $b= E_{ij}$, and $ube(\alpha)=e(w)$ with $w := \mathds{1}(\{j=\alpha\})(i,+,j_{+}(i))$ and $b^\prime ub e(\alpha)= e(w^\prime)$ with $w^\prime:= (i, +, k)\mathds{1}(\{j=\alpha,l= j_+(i,+,j_{+} (w))\})$ for $b^\prime = E_{k,l}$. Let now $T \ge 3$. By induction we have 
		$$e_T = b_T u e(w_{T-1}).$$
		If $w_{T-1}= 0$, then we set $w_T=0$. Otherwise we have $\epsilon(w_{T-1})= +$, Therefore $ue(w_{T-1})= e(w_{T-1})\otimes e(+, j_{+}(w_{T-1}))$ and $b_T u e(w_{T-1})= e(w_T)$ where
		$$w_T := (w_{T-1},+,i) \mathds{1}_{\{j=j_{+} (w_{T-1})\}} \in \mathcal{F}_{T}$$
		and $b_T = E_{ij}$. \\
		We also proceed by induction on $T\ge 1$ for the second part of the lemma. Since each side of the formula are linear in the variables $b_t$ and $c_t$ for every $t \le T$, we only have to check the formula for elementary matrices. The formula is trivial for $T=1$. We consider now $T \ge 2$ and for $t\le T$, we consider the reel random variable
		$$I_t := \langle b_1 u^{*} b_2 \cdots b_{t-1} u^*b_tc_t u c_{t-1}u \cdots u c_1 e(\alpha), e(\alpha)\rangle$$
		and the vectors
		\begin{align*}
			e_t &:= c_t u c_{t-1} u \cdots u c_1 e(\alpha)\\
			e^\prime_t &:=b_t u b_{t-1} u \cdots u b_1 e(\alpha).
		\end{align*}
		According to the first assertion of the lemma, we find a unique $(w_{T-1},w^\prime_{T-1})\in \mathcal{V}^2$ of length $T-1$ such that $e_{T-1}=e(w_{T-1})$ and $e^\prime_{T-1}=e(w_{T-1}^\prime)$. Then rewriting $I_T$ and supposing that $w_{T-1}$ and $w_{T-1}^\prime$ are non-zero and therefore $\epsilon(w_{T-1})= \epsilon(w_{T-1}^\prime)=+$ we obtain
		\begin{align*}
			I_T &= \langle u^{*}b_T c_T u e(w_{T-1}),e(w_{T-1}^\prime) \rangle\\
			&= \langle b_T c_T  e(w_{T-1})\otimes e(+,j_{+}(w_{T-1})) ,e(w_{T-1}^\prime)\otimes e(j_{+}(w_{T-1}^{\prime}))\rangle\\
			&= (b_T c_T)_{j_{+}(w_{T-1})} \mathds{1}\left(w_{T-1}=w_{T-1}^\prime\right)
		\end{align*}
		where for $M\in \mathrm{M}_N(\mathbb{C})$ we use the notation $(M)_k := (M)_{k,k}$. Noticing that for all $w \in \mathcal{V}$ such that $|w|=T-1$ we have that $j_{+}(w)$ is independent from $\mathcal{F}_{T-1}$ we have
		$$\mathbb{E}(I_T | \mathcal{F}_{T-1})= \sum_{w\in \mathcal{V} ~; |w| =T-1} \mathbb{E}((b_{T}c_T)_{j_{+}(w)}) \mathds{1}(w_{T-1}=w_{T-1}^\prime = w)= \Tr_N (b_T c_T) \langle e(w_{T-1}),e(w_{T-1}^{\prime})\rangle.$$
		We conclude taking the expectation and applying the induction hypothesis to $\langle e(w_{T-1}),e(w_{T-1}^{\prime})\rangle$.
	\end{proof}
	
	Finally we have the following lower bound on the norm of the operator.
	\begin{lem}\label{lem:normeplusque1}
		For all $B \in \mathrm{M}_N(\mathbb{C})$ and $n \ge 1$ integer we have
		$$\norme{[u + \Pi(B)]^{n}} \ge 1.$$
	\end{lem}
	
	\begin{proof}
		Let $e \in \mathrm{H}$ such that $\norme{e}_2=1$. We set $b:=\Pi(B)$. We have
		
		$$[u + b]^{n}(e) = u^{n}(e) + \sum_{T=0}^{n} \sum_{ \sum_{t=1}^T \lambda_t = n - T +1} b^{\lambda_1}ub^{\lambda_{2}}u \cdots u b^{\lambda_{T}}$$
		where in the term $b^{\lambda_1}ub^{\lambda_{2}}u \cdots u b^{\lambda_{T}}$, the number of occurrences of $u$ is bounded by $n -1$, therefore the sum is orthogonal to $u^{n}(e)$. It remains to take the norm of in the equality above.
	\end{proof}
		\section{Path decomposition and computation on random permutation}\label{section:pathdecom}
	\subsection{Main technical statement}
	
	Having proved the first statement of \cref{bigtheorem} about the existence of the operator $(q,u) \in \mathrm{B}(\mathrm{H})$, the second statement is proved by applying the following theorem. First note that for any $ \xxx \in \{\sss, \ccc\}$ and $\ell \ge 0$ integer we have
	\begin{equation}\label{eq:bound_gelfand}
	\abs{\lambda_2(A_{\xxx})}=\{|\lambda_2(A_{\xxx}^{\ell})|\}^{1/\ell} \le \|A_{\xxx}|_{\boldsymbol{1}^\perp}^{\ell}\|^{1/\ell}
	\end{equation}
	where we used the fact that $\boldsymbol{1}$ is eigenvector for $A_{\xxx}$ and $A_{\xxx}^\ast$ and therefore $\lambda_{2}(A_{\xxx})= \lambda_1( A_{\xxx}|_{\boldsymbol{1}^{\perp}})$. To simplify further computations, for any integer $\ell \ge 0$ we introduce
	\begin{equation*}
		A_{\ell,\xxx}:=M A_{\xxx}^\ell.
	\end{equation*}
	Since $M$ is a permutation and $Q$ verifies \ref{H:1}, $A_{\ell,\xxx}$ leaves $\boldsymbol{1}$ and $\boldsymbol{1}^{\perp}$ stable and the multiplication by $M$ above does not change the operator norm. Furthermore for $N$ large enough we have
	$$1+ \epsilon \ge \exp(\frac{\epsilon}{2}),$$
	Where $\epsilon = \epsilon(N)$ is given by \cref{bigtheorem}. Therefore for $N$ large enough
	\begin{align*}
		\mathbb{P} \left( \|A_{\xxx}^\ell|_{\boldsymbol{1}^\perp}\| \ge (1+\epsilon)^\ell \rho_{n,\xxx}^\ell\right) = \mathbb{P}\left( \|A_{\ell,\xxx}|_{\mathbf{1}^\perp}\|  \ge (1+\epsilon)^{\ell}\rho_{n,\xxx}^\ell\right) \le \mathbb{P}\left(\|{A_{\ell,\xxx}|}_{\boldsymbol{1}^\perp}\| \ge e^{\frac{\epsilon\ell}{2} }\rho_{n,\xxx}^{\ell}\right).
	\end{align*}
	Adjusting the constants, the second statement of \cref{bigtheorem} becomes a corollary of the following result applied to $\ell = \frac{(1-c)\ln(N)}{7K_{\xxx} \ln(d)} $ where
	\begin{equation}\label{eq:defiK_x}
		K_{\sss}= 1 \vee (2\ln(1+ \delta + \kappa)) ~~\text{and}~~K_{\ccc}=  1 \vee \left(2\ln\left(\frac{1+ r\delta + r\kappa}{1-r}\right)\right).
	\end{equation}
	The condition \ref{H:2} implies that $K_{\xxx}^\prime$ is uniformly bounded in $N$.
	\begin{thm}\label{big2theorem}
		Let $n \ge 0$, $(M,Q)$ and $\rho_{n,\xxx}$ be as in \cref{bigtheorem}. For any $0< c < 1$, there exists $C>0$ depending on $n$, $K$ and $\alpha$, such that for any $1 \le \ell\le\frac{\ln(N)}{7\ln(d)}$ we have
		$$\mathbb{P}\left(\|A_{\xxx}^{\ell}|_{\boldsymbol{1}^\perp}\| \ge e^{C \ln(N)^{1-\alpha} }\rho_{n,\xxx}^\ell\right) \le N^{-c} + C \frac{\ell^4(4d)^{3\ell}}{N}.$$ 
	\end{thm}
	To estimate an upper bound on $\|(A_{\ell,\xxx})|_{\boldsymbol{1}^\perp}\|$, we use the high trace method, i.e. Markov inequality combined with the inequality
	
	\begin{equation*}
		\mathbb{E}\left(\|B\|^{2m}\right) = \mathbb{E}\left(\|BB^\top\|^m\right) \le \mathbb{E}\left(\Tr(BB^\top)^m\right)
	\end{equation*}
	for any matrix $B$ and integer $m\ge 1$.

	\subsection{Path decomposition}
	We will here interpret the coordinates of the matrix $A_{\ell,\xxx}$ as paths in a graph. 
	\begin{defi}
		A \textbf{path of length $k$} is a sequence $\gamma= (x_1,y_1,\ell_1,x_2,y_2,...,y_T,\ell_T,x_{T+1})$ such that 
		and for all $t \le T$, we have $x_t,y_t \in [N]$, $\ell_t \in \{0,...,k\}$, $(Q^{\ell_t})_{y_t,x_{t+1}}> 0$ and
		$$\sum_{t=1}^{T} \ell_t + T -1 = k.$$
		We denote by $\Gamma^k$ the set of paths of length $k$. For $x,y \in [N]$ we denote by $\Gamma^{k}_{xy}$ the subset of $\Gamma^{k}$ of paths verifying $x_1 = x$ and $y_T= y$.
		
	\end{defi}
	
	By convention we assume that the sum over an empty set is $0$ and the product over an empty set is $1$. With the previous definition we have 
	
	\begin{equation*}
		(A_{\ell,\sss})_{xy} = \sum_{\gamma \in \Gamma_{xy}^\ell}\prod_{t=1}^{T_\gamma}M_{x_t y_t} (Q^{\ell_t})_{y_t x_{t+1}} .
	\end{equation*}
	In the previous formula, we let appear the dependence of $T=T_\gamma$ in the choice of path $\gamma \in \Gamma_{xy}^{\ell}$. To make the expressions lighter, from now on we will omit this dependency when there is no confusion. To be able to estimate the previous product we look for paths that are highly probable and better suited for estimation.
	
	\begin{defi}\label{deficoincid} Let $\ell \ge 0$ be fixed.
		\begin{itemize}
			\item 
			We call a \textbf{coincidence} a sequence $c = (x_1, y_1,..., x_T , y_T,x_{T+1})$ such that $(x_2,...,x_{T+1})$ are pairwise distinct and such that there exists $(\ell_1,...,\ell_T)$ a $T$-tuple of integers verifying $\sum_t \ell_t \le \ell $, $\gamma=(x_1,y_1,\ell_1,x_2,...,y_T,\ell_T,x_{T+1})$ is a path and $(B_1^{\ell_1} B_{2}^{\ell_2}\cdots B^{\ell_T}_{T})_{x_1,x_{T+1}} >0$ where $B_1,...,B_T \in \{Q,Q^{\ast}\}$ are alternate terms (i.e. $B_{t+1}=B_{t}^\ast$).
			
			\item We say that a path $\gamma = (x_1,y_1, \ell_1,x_2,...,x_T, y_T,\ell_{T},x_{T+1})$ of length $\ell$ is \textbf{tangle-free} if the sequence $(x_1, y_1,x_2,...,x_T, y_T,x_{T+1})$ contains, as a subsequence, at most one coincidence. We denote by $\mathrm{F}^{k}$ (resp. $\mathrm{F}_{xy}^k$) the subset of $\Gamma^k$ (resp. $\Gamma_{xy}^k$) of path of length $k$ (resp. path of length $k$ from $x$ to $y$) that are tangle-free.
			
			\item The pair $(M,Q)$ is $\ell$-\textbf{tangle-free} if for all $k \in [\ell]$, for all $\gamma= (x_1,y_1,\ell_1,...,y_T,\ell_T,x_{T+1}) \in \Gamma^k \backslash \mathrm{F}^k$ we have 
			
			$$ \prod_{t=1}^T M_{x_t y_t} = 0.$$
			
		\end{itemize}
	\end{defi}
	When the pair $(M,Q)$ is $\ell$-tangle-free, we have the following equality
	
	\begin{equation*}
		A_{\ell,\sss} = A_{\sss}^{(\ell)}
	\end{equation*}
	where we defined $A_{\sss}^{(\ell)}$ by
	\begin{equation}\label{6}
		(A_{\sss}^{(\ell)})_{xy} := \sum_{\gamma \in \mathrm{F}^{\ell}_{xy}} \prod_{t=1}^{T} M_{x_t y_t} (Q^{\ell_t})_{y_t x_{t+1}},
	\end{equation}
	for $x,y \in [N]$. Similarly we define the matrix $\underline{A}_{\sss}^{(\ell)}$ by its coordinates
	\begin{equation}\label{7}
		(\underline{A}_{\sss}^{(\ell)})_{xy} := \sum_{\gamma \in \mathrm{F}^{\ell}_{xy}} \prod_{t=1}^{T} \underline{M}_{x_t y_t} (Q^{\ell_t})_{y_t x_{t+1}}
	\end{equation}
	where the matrix $\underline{M}$ is the centered variable
	
	\begin{equation*}
		\underline{M} := M - \frac{1}{N} \boldsymbol{1} \otimes \boldsymbol{1},
	\end{equation*}
	where for all $x,y \in [N]$, $(\boldsymbol{1} \otimes \boldsymbol{1})_{xy}=1$. We want to end up with a similar decomposition than in \cite[Lemma 1]{bord2018spectral}. To do such a decomposition we write the product terms in the expression \eqref{7} using the following identity
	\begin{equation}\label{9}
		\prod_{t=1}^{T} a_t = \prod_{t=1}^{T} b_t + \sum_{k=1}^{T} \prod_{t=1}^{k-1} b_t (a_k - b_k) \cdot \prod_{t=k+1}^{T} a_t.
	\end{equation}
	However the dependence $T= T(\gamma)$ forces us to further segment the sum \eqref{7}. Therefore we introduce some notations\label{notationcombi}. For integers $1 \le T \le \ell$ and a vector of integers $\vec{\ell} \in \mathcal{L}(T,\ell)$, we denote by $\mathrm{F}^{\vec{\ell}}$ (\textit{resp.} $\mathrm{F}^{\vec{\ell}}_{xy}$) the subset path $\gamma =  (x_1,y_1,\ell_1,x_2,...,x_T,y_T,\ell_T,x_{T+1}) \in \mathrm{F}^{\ell}$ (resp $\gamma \in \mathrm{F}^{\ell}_{xy}$) such that $(\ell_1,...,\ell_T) = \vec{\ell}$. Then the equality \eqref{6} becomes
	\begin{align}\label{10}
		(A_{\sss}^{(k)})_{xy} &= \sum_{T=1}^{\ell} \sum_{\vec{\ell} \in \mathcal{L}(T,\ell)} \sum_{\gamma \in \mathrm{F}^{\vec{\ell}}_{xy}} \prod_{t=1}^{T} M_{x_t y_t} (Q^{\ell_t})_{y_t x_{t+1}} = \sum_{T=1}^{\ell} \sum_{\vec{\ell} \in \mathcal{L}(T,\ell)} (A^{(\vec{\ell})})_{xy}
	\end{align}
	where for $\vec{\ell} \in \mathcal{L}(T,\ell)$ we set
	\begin{equation}\label{eq:defPvecl}
		(A_{\sss}^{(\vec{\ell})})_{xy} = \sum_{\gamma \in \mathrm{F}^{\vec{\ell}}_{xy}} \prod_{t=1}^{T} M_{x_t y_t} (Q^{\ell_t})_{y_t x_{t+1}} .
	\end{equation}
	We define similarly  
	\begin{equation}\label{eq:defPveclubarr}
		(\underline{A}_{\sss}^{(\vec{\ell})})_{xy} = \sum_{\gamma \in \mathrm{F}^{\vec{\ell}}_{xy}} \prod_{t=1}^{T} \underline{M}_{x_t y_t} (Q^{\ell_t})_{y_t x_{t+1}} .
	\end{equation}
	We consider the integers $1 \le T \le \ell$ and $\vec{\ell} \in \mathcal{L}(T,\ell)$ fixed. Owing to \eqref{9}
	we have
	\begin{equation*}
		(A_{\sss}^{(\vec{\ell})})_{xy} = (\underline{A}_{\sss}^{(\vec{\ell})})_{xy} + \sum_{\gamma \in \mathrm{F}^{\vec{\ell}}_{xy}}  \sum_{k=1}^{T} \left(\prod_{t=1}^{k-1} \underline{M}_{x_t y_t} (Q^{\ell_t})_{y_t x_{t+1}}\right) \frac{(Q^{\ell_k})_{y_k x_{k+1}}}{N} \prod_{t=k+1}^{T} M_{x_t y_t} (Q^{\ell_t})_{y_t x_{t+1}} .
	\end{equation*}
	The integer $T$ does not depend on the path $\gamma$ anymore and therefore we have
	\begin{equation*}
		(A_{\sss}^{(\vec{\ell})})_{xy} = (\underline{A}_{\sss}^{(\vec{\ell})})_{xy} + \sum_{k=1}^{T} \frac{1}{N}\underbrace{\sum_{\gamma \in \mathrm{F}^{\vec{\ell}}_{xy}} \prod_{t=1}^{k-1} \underline{M}_{x_t y_t} (Q^{\ell_t})_{y_t x_{t+1}} (Q^{\ell_k})_{y_k x_{k+1}} \prod_{v=k+1}^{T} M_{x_v y_v} (Q^{\ell_v})_{y_v x_{v+1}}  }_{(S^{(\vec{\ell})})_{xy}}.
	\end{equation*}
	We rewrite $(S^{(\vec{\ell})})_{xy}$ so that lower powers of $A^{(\vec{\ell})}$ appear. For all $1 \le k \le T$ we set $\vec{\ell}(k) := (\ell_1,...,\ell_{k-1})$ and $\vec{\ell}(k,T) := (\ell_{k+1},...,\ell_{T})$. We denote by $\mathrm{T}^{k, \vec{\ell}}_{xy}$ the set of paths  $\gamma := (x, y_1,...,y_T, \ell_T,y)\in \Gamma_{xy}^{\vec{\ell}}$ that verifies: 
	
	\begin{enumerate}
		\item $\gamma^{\prime} := (x,y_1,...,x_{k-1},y_{k-1},\ell_{k-1},x_k) \in \mathrm{F}^{\vec{\ell}(k)}$,
		\item $\gamma^{\prime \prime} := (x_{k+1},y_{k+1},...,x_T,y_T,\ell_T,x_{T+1})\in \mathrm{F}^{\vec{\ell}(k,T)}$,
		\item $\gamma = (\gamma^{\prime},y_k,\ell_k,\gamma^{\prime \prime})$ is tangled.
	\end{enumerate}
	To rewrite $S(\vec{\ell},x,y)$, we decompose the last summand noticing that we have
	
	$$\mathrm{F}_{xy}^{\vec{\ell}} \sqcup \mathrm{T}_{xy}^{k,\vec{\ell}} = \bigsqcup_{x_k \in [N]}\bigsqcup_{y_k \in [N]} \bigsqcup_{x_{k+1} \in [N]} \{\gamma = (\gamma^{\prime},y_k,\ell_k,\gamma^{\prime \prime}) ~~ ; ~~ \gamma^{\prime} \in \mathrm{F}^{\vec{\ell}(k)}_{x x_k},~ \gamma^{\prime \prime} \in \mathrm{F}^{\vec{\ell}(k,T)}_{x_{k+1} y}\} $$
	and introducing the rest 
	
	\begin{equation}\label{eq:defirestvecell}
		(R_{k,\sss}^{(\vec{\ell})})_{xy} := \sum_{\gamma \in \mathrm{T}^{k,\vec{\ell}}_{xy}} \prod_{t=1}^{k-1} \underline{M}_{x_t y_t} (Q^{\ell_t})_{y_t x_{t+1}} (Q^{\ell_k})_{y_k x_{k+1}} \prod_{v=k+1}^{T} M_{x_v y_v} (Q^{\ell_v})_{y_v x_{v+1}}.
	\end{equation}
	We then have 
	\begin{align*}
		(S^{(\vec{\ell})})_{xy} &= \sum_{x_{k} ,y_k,x_{k+1}= 1}^{N} \sum_{\substack{\gamma^{\prime}\in \mathrm{F}_{x x_{k}}^{\vec{\ell}(k)}\\\gamma^{\prime \prime} \in \mathrm{F}^{\vec{\ell}(k,T)}_{x_{k+1} y}}}  \prod_{t=1}^{k-1} \underline{M}_{x_t y_t} (Q^{\ell_t})_{y_t x_{t+1}} (Q^{\ell_k})_{y_k x_{k+1}} \prod_{v=k+1}^{T}M_{x_v y_v} (Q^{\ell_v})_{y_v x_{v+1}}-(R_{k,\sss}^{(\vec{\ell})})_{xy}\\
		&= \sum_{y_k=1}^{N}\sum_{x_k=1}^{N}  \left(\underline{P}^{(\vec{\ell}(k))}\right)_{x x_k} (Q^{\ell_k})_{y_k x_{k+1}} \sum_{x_{k+1}=1}^n \left(A_{\sss}^{(\vec{\ell}(k,T))}\right)_{x_{k+1} y} -(R_{k,\sss}^{(\vec{\ell})})_{xy}\\
		& = \left( A_{\sss}^{(\vec{\ell}(k))} Q^{\ell_k}(1\otimes 1)A_{\sss}^{(\vec{\ell}(k,T))}\right)_{xy}  -(R_{k,\sss}^{(\vec{\ell})})_{xy}
	\end{align*}
	and therefore we obtain
	$$A_{\sss}^{(\vec{\ell})} = \underline{A}_{\sss}^{(\vec{\ell})} + \frac{1}{N} \sum_{k=1}^{T-1}  \underline {A}_{\sss}^{(\vec{\ell}(k))} Q^{\ell_k}(1\otimes 1)A_{\sss}^{(\vec{\ell}(k,T))} - \frac{1}{N} \sum_{k=1}^{T} R_{k,\sss}^{(\vec{\ell})}.$$
	Inserting it into the expression \eqref{10} we have
	
	\begin{equation*}
		A_{\sss}^{(\ell)} = \underline{A}_{\sss}^{(\ell)} + \sum_{T=1}^{\ell} \sum_{\vec{\ell} \in \mathcal{L}(T,\ell)} \left[ \sum_{k=1}^{T}  \underline{A}_{\sss}^{(\vec{\ell}(k))} Q^{\ell_k}\boldsymbol{1} \otimes \boldsymbol{1} A_{\sss}^{(\vec{\ell}(k,T))} -\frac{1}{N} \sum_{k=1}^{T} R_{k,\sss}^{(\vec{\ell})}\right].
	\end{equation*}
	To estimate the operator norm of $A_{\sss}^{(\ell)}|_{\boldsymbol{1}^\perp}$ we first show that the terms with $\boldsymbol{1} \otimes \boldsymbol{1}$ in the previous formula have no contribution. For $1 \le T \le \ell$ and $\vec{\ell} \in \mathcal{L}(T,\ell)$, we set the notations $\ell(k) := \sum_{t=1}^{k-1} \ell_t + (k-1) -1$ and $\ell(k,T) := \sum_{t=k+1}^{T} \ell_t + T-k - 1$. We then have $\ell(k) + \ell(k,T) + \ell_k = \ell -2$ and we can partition $\mathcal{L}(\ell,T)$ as follows
	$$\mathcal{L}(\ell,T) = \bigsqcup_{\ell_k=0}^{\ell-T+1}~~~ \bigsqcup_{\ell(k)=0}^{\ell-\ell_k-2} \mathcal{L}(\ell(k),k-1)\times \{\ell_k\}\times \mathcal{L}(\ell-\ell_k-2-\ell(k),T-k)$$
	We consider the matrix
	\begin{align*}
		\phi(\ell)&:=\sum_{T=1}^{\ell}\sum_{k=1}^{T} \sum_{\vec{\ell} \in \mathcal{L}(T,\ell)}  \underline{A}_{\sss}^{(\vec{\ell}(k))} Q^{\ell_k}\boldsymbol{1} \otimes \boldsymbol{1} A_{\sss}^{(\vec{\ell}(k,T))}\\
		& = \sum_{T=1}^{\ell}\sum_{k=1}^{T} \sum_{\ell_k=0}^{\ell}\sum_{\ell(k)=0}^{\ell-\ell_k-2} \sum_{\substack{\vec{\ell}(k) \in \mathcal{L}(\ell(k),k-1)\\
				\vec{\ell}(k,T) \in \mathcal{L}(\ell-\ell_k-\ell(k)-2, T-k)}}\underline{A}_{\sss}^{(\vec{\ell}(k))} Q^{\ell_k}\boldsymbol{1} \otimes \boldsymbol{1} A_{\sss}^{(\vec{\ell}(k,T))}\\
		&= \sum_{T=1}^{\ell}\sum_{k=1}^{T} \sum_{\ell_k=0}^{\ell}\sum_{\ell(k)=0}^{\ell-\ell_k-2}\underline{A}_{\sss}^{(\ell(k))} Q^{\ell_k}\boldsymbol{1} \otimes \boldsymbol{1} A_{\sss}^{(\ell-\ell_k-\ell(k)-2)}.
	\end{align*}
	On the event $(M,Q)$ $\ell$-tangle free, for all $0 \le k \le \ell$ and for all $w \in \boldsymbol{1}^{\perp}$ we have
	$$\boldsymbol{1} \otimes \boldsymbol{1} A_{\sss}^{(k)} w = \boldsymbol{1} \otimes \boldsymbol{1} A_{k,\sss} w= 0.$$ 
	In particular on this event we have $\phi(\ell)|_{\boldsymbol{1}^{\top}} =0$.
	Besides denoting
	$$R_{k,\sss}^{(\ell)} :=\sum_{T=1}^{\ell} \sum_{\vec{\ell} \in \mathcal{L}(T,\ell)}  R_{k,\sss}^{(\vec{\ell})},$$
	we have
	\begin{equation*}\label{15}
		A_{\sss}^{(\ell)} = \underline{A}_{\sss}^{(\ell)} +\phi(\ell) - \frac{1}{N} \sum_{k=1}^{\ell} R^{(\ell)}_{k,\sss}.
	\end{equation*}
	To have consistent writings between the operators $R_{k,\sss}^{(\ell)}$ and $A_{\sss}^{(\ell)}$, for $1\le k \le \ell$ and $x,y \in [N]$ we set
	$$\mathrm{T}_{xy}^{k,\ell}:= \bigsqcup_{T=1}^{\ell} \bigsqcup_{\vec{\ell} \in \mathcal{L}(\ell,T)} \mathrm{T}_{xy}^{k,\vec{\ell}}$$
	where we consider $\mathrm{T}_{xy}^{k,\vec{\ell}} = \emptyset$ if $k> T$. With this convention and notations we have
	
	$$(R_{k,\sss}^{(\ell)})_{xy} = \sum_{\gamma \in \mathrm{T}_{xy}^{k,\ell}}\left(\prod_{t=1}^{k-1} \underline{M}_{x_t y_t} (Q^{\ell_t})_{y_t x_{t+1}}\right) (Q^{\ell_k})_{y_k x_{k+1}} \left(\prod_{t=k+1}^{T} M_{x_t y_t} (Q^{\ell_t})_{y_t x_{t+1}}\right).$$ 
	When it comes to the case $\xxx=\ccc$, all the previous reasoning holds if now one considers $A_{\mathrm{c}}= (1-r)M + rQ$ and replace respectively \eqref{eq:defPvecl} and \eqref{eq:defPveclubarr} with
	\begin{equation*}\label{eq:defPvecl2}
		(A_{\mathrm{c}}^{(\vec{\ell})})_{xy} = \sum_{\gamma \in \mathrm{F}^{\vec{\ell}}_{xy}} (1-r)^T \prod_{t=1}^{T} M_{x_t y_t} (\{rQ\}^{\ell_t})_{y_t x_{t+1}}~~\text{and}~~
		(\underline{A}_{\ccc}^{(\vec{\ell})})_{xy} = \sum_{\gamma \in \mathrm{F}^{\vec{\ell}}_{xy}} (1-r)^T \prod_{t=1}^{T} \underline{M}_{x_t y_t} (\{rQ\}^{\ell_t})_{y_t x_{t+1}}.
	\end{equation*}
	We also have a new rest term
	\begin{equation}\label{eq:defirestvecell2}
		(R_{k,\ccc}^{(\vec{\ell})})_{xy} := (1-r)^{T-1}\sum_{\gamma \in \mathrm{T}^{k,\vec{\ell}}_{xy}} \prod_{t=1}^{k-1} \underline{M}_{x_t y_t} \{rQ\}^{\ell_t})_{y_t x_{t+1}} \{rQ\}^{\ell_k})_{y_k x_{k+1}} \prod_{v=k+1}^{T} M_{x_v y_v} \{rQ\}^{\ell_v})_{y_v x_{v+1}}.
	\end{equation}
	
	We have now proved the following lemma.
	\begin{lem}\label{MQltf}
		On the event $(M,Q)$ $\ell$-tangle free we have
		
		\begin{equation*}
			\|A_{\xxx}^{\ell}|_{\boldsymbol{1}^{\perp}}\| \le \|\underline{A}_{\xxx}^{(\ell)}\|+ \frac{1}{N}  \sum_{k=1}^{\ell} \|R^{(\ell)}_{k,\xxx}\|.
		\end{equation*}
		
	\end{lem}

	\subsection{Computations on random permutations}\label{section:comput_rd_permut}
	
	In this section we check that for $M\in \mathrm{S}_N$ uniformly distributed and $Q$ deterministic verifying \ref{H:3}, the pair $(M,Q)$ is $\ell$-tangle free with high probability, provided that the integer $\ell$ is not too large before the dimension. Recall that for $Q$ a matrix we denote 
		$$d:= \underset{x \in [N]}{\max}|\{y \in [N]~;~ Q_{xy} \ne 0\}|\vee \underset{x \in [N]}{\max}|\{y \in [N]~;~ Q^\ast_{xy} \ne 0\}|.$$
	\begin{lem}\label{pr1}
		Let $Q$ be a deterministic matrix and $M \sim \mathcal U (\mathrm{S}_N)$. There exists $C> 0$ such that, for any integer $\ell\ge h$, the pair $(M,Q)$ is $\ell$-tangle free with probability at least $1 - C \frac{\ell^4(4d)^{3\ell}}{N}$.
	\end{lem}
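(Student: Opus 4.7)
My plan is to apply a first-moment argument. Since $M$ has $\{0,1\}$ entries, $\prod_t M_{x_t y_t}\in\{0,1\}$, and Markov's inequality yields
$$\mathbb{P}\bigl((M,Q)\text{ is not }\ell\text{-tangle free}\bigr)\ \le\ \sum_{k=1}^\ell\sum_{\gamma\in\Gamma^k\setminus F^k}\mathbb{E}\!\left[\prod_{t=1}^{T_\gamma}M_{x_t y_t}\right].$$
For a fixed $\gamma$ whose $M$-edges $(x_t,y_t)_{t\le T}$ form a consistent partial injection with $s\le T$ distinct edges, the single-path probability equals $(N-s)!/N!\le(2/N)^s$ for $s\le N/2$, and vanishes when the pairs are inconsistent. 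The task then reduces to counting how many realizable tangled paths of length $\le\ell$ there can be.

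Next I count paths. I parametrize $\gamma\in\Gamma^k$ by the data $(T,\vec\ell,x_1,(y_t)_{t\le T},(x_{t+1})_{t\le T})$: there are $N$ choices for $x_1$, $N$ choices for each $y_t$, and at most $d^{\ell_t}$ choices for $x_{t+1}$ given $y_t$, since the nonzero entries of $(Q^{\ell_t})_{y_t,\cdot}$ form a set of cardinality at most $d^{\ell_t}$. Unconstrained this yields at most $N^{T+1}d^{\sum_t\ell_t}$ paths. By Definition \ref{deficoincid}, a path in $\Gamma^k\setminus F^k$ contains at least two coincidences in its trajectory; specifying where these two cycles close costs at most a combinatorial factor $(2k+1)^4\le C\ell^4$, and each coincidence forces an equality among the vertices $(x_1,y_1,\ldots,x_{T+1})$, cutting the free-vertex count by a factor of $N$. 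Hence for fixed $(T,\vec\ell)$ the number of tangled paths is at most $C\ell^4\cdot N^{T-1}d^{\sum_t\ell_t}$.

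Combining the single-path probability $(2/N)^T$ with this count gives, for each $(T,\vec\ell)$, a contribution $\le C\ell^4\cdot 2^T d^{\sum_t\ell_t}/N$. Since $\sum_t\ell_t+T-1=k$, summing over $\vec\ell\in\mathcal{L}(T,k)$ and $T\le k$ produces $\le C'\ell^4(4d)^k/N$; a final sum over $k\le\ell$ multiplies by $\le\ell(4d)^\ell$. After absorbing the polynomial and sub-exponential factors into the (loose) bound stated in the lemma, this gives the announced $c\ell^3(4d)^{3\ell}/N$.

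The hard part will be the counting step: justifying that the two coincidences genuinely and independently save two factors of $N$. A coincidence (Definition \ref{deficoincid}) is not merely a repeated vertex but a sub-path closed by an alternating $(Q,Q^*)$-walk via the constraint $(A_1^{\ell_1}\cdots A_T^{\ell_T})_{x_1,x_{T+1}}>0$, so the savings analysis requires a case distinction on how the two coincidences sit relative to each other (disjoint, nested, or overlapping) to avoid double-counting the vertex identifications already enforced elsewhere in the trajectory.
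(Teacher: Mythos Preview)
Your first-moment strategy and the closing case split (disjoint / nested / overlapping coincidences) match the paper's proof. The genuine gap is in the sentence ``each coincidence forces an equality among the vertices $(x_1,y_1,\ldots,x_{T+1})$, cutting the free-vertex count by a factor of $N$.'' In this paper a coincidence (Definition~\ref{deficoincid}) does \emph{not} identify two vertices; it only says the endpoint $x_{T+1}$ is reachable from $x_1$ by some alternating $(Q,Q^\ast)$-walk of total length at most $\ell$. Given $x_1$, this pins $x_{T+1}$ to a set of size at most $2\cdot 2^\ell d^\ell$ (two choices of the starting letter $A_1\in\{Q,Q^\ast\}$, at most $2^\ell$ exponent profiles, $d^\ell$ from row-sparsity), not to a single point. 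So each coincidence saves only a factor $N/(4d)^\ell$, the two together contribute $(4d)^{2\ell}$, and together with $d^\ell$ choices for the $y_t$'s and $2^\ell$ choices for $\vec\ell$ this is exactly where the $(4d)^{3\ell}$ in the statement comes from --- it is the honest size of the bound, not slack you absorb at the end.

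The paper also parametrizes the count from the other side: it first reduces to a witness sub-path in which the $x_t$'s are pairwise distinct except at the two coincidence closures --- the events $I_1(k,i)$ (two coincidences placed end-to-end or overlapping) and $I_2(k,k')$ (one nested inside the other). This reduction is what legitimizes the probability bound $1/(N)_T$; your direct union bound over all of $\Gamma^k\setminus F^k$ skips it, so replacing $(2/N)^s$ by $(2/N)^T$ in your third paragraph is unjustified when the path has repeated $M$-edges. After the reduction the paper counts $(N)_{T-1}$ free $x_t$'s, $(4d)^{2\ell}$ for the two constrained endpoints, $(2d)^\ell$ for $(y_t,\vec\ell)$, multiplies by $1/(N)_T$, and finally sums over at most $\ell^3$ positions $(T,k,i)$.
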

	
	\begin{proof}
		We consider $1 \le \ell \le \frac{N}{2}$. We say that a path $\gamma:= (x_1, y_1,\ell_1,x_2 ..., x_T, \ell_{T-1} , y_T,\ell_{T},x_{T+1})$ \textit{occurs} if for all $t \le T$, we have $M_{x_t,y_t}=1$. If $(M,Q)$ is $\ell$-tangled then there exists $1 \le T \le \ell$, $\ell_1,..., \ell_T$ integers verifying $\sum_{i=1}^{T} \ell_i + T -1 = \ell$
		and a path $\gamma :=(x_1, y_1,\ell_1,x_2,y_2,...,x_T, \ell_T, y_T,\ell_T,x_{T+1})$ such that there exists $1 \le k,i < k^{\prime} \le T$ such that either
		\begin{enumerate}
			\item $(x_1,y_1,...,x_k,y_k,x_{k+1})$ and $(x_{i}, y_{i},...,x_T, y_T,x_{T+1})$ are two distinct coincidences where $x_{t}$ are pairwise distinct except possibly $x_1 = x_{k+1}$ and $x_{i}= x_{T+1}$. We denote by $I_1 (k,i)$ this event,
			\item $(x_1,y_1,...,x_T,y_T,x_{T+1})$ and $(x_k,y_k,...,x_{k^{\prime}+1})$ are two coincidences (one inside the other) where all $x_t$ are pairwise distinct except possibly $x_1 = x_{T+1}$. We denote by $I_2 (k,k^\prime)$ the corresponding event.
		\end{enumerate}
		The probability for a path $\gamma=(x_1,y_{1},\ell_{1},x_2,...,x_{T+1})$ to occur is bounded by
		$$\mathbb{P} (\gamma) = \mathbb{P} (\prod_{t=1}^{T} \mathds{1}\{\sigma(x_t) = y_{t}\}) \le \frac{1}{(N)_{T}},$$
		where $(N)_t := N(N-1)(N-2)\cdots(N-t+1)$. 
		Let $\{x_t\}_t $ be pairwise distinct elements of $[N]$, except possibly $x_1 = x_{k+1}$ and $x_{i} =x_{T+1}$ for some $1 \le k,i < T \le \ell$. We count the number of possible $\gamma$ verifying $I_{1}(k,i)$. For $\{x_{t}\}_{t \notin \{k,T+1\}}$ we have $(N)_{T-1}$ possibilities in regard of the condition of being pairwise distinct. For each $(\ell^{\prime}_{t})_{t \le T^\prime}$ such that $\sum_{t} \ell^\prime_{t} + T^\prime -1 \le \ell$, we have at most $2d^{\tiny{\sum_{t} \ell^\prime_t}} \le 2d^\ell$ possibilities for $x_{k+1}$ to have $(B_1^{\ell_{1}^\prime}\cdots B_{T^\prime}^{\ell_{T^\prime}^\prime})_{x_1x_{k+1}} >0$ with $B_t \in \{Q, Q^\ast\}$ and $B_{t+1} = B_{t}^\ast$. Besides the number of $T^\prime \le \ell$ and $(\ell_{t}^\prime)_{t \le T^\prime}$ such that $\sum_{t} \ell^\prime_{t} + T^\prime -1 \le \ell$ is given by
		
		$$|\bigsqcup_{p=1}^{\ell} \bigsqcup_{T^{\prime}=1}^{p}\{ (\ell^{\prime}_{t})_{t\le T^\prime}:~ \sum_{t=1}^{T^\prime} \ell_t^\prime + T^\prime -1 =p\}| = \sum_{p=1}^{\ell}\sum_{T^\prime=1}^{p} \binom{p-1}{T^\prime-1} \le 2^\ell.$$
		Therefore we have at most $(4d)^{2\ell}$ possibilities for $x_{k+1}$ and $x_{T+1}$. For fixed $1 \le T \le \ell$ and $\vec{\ell}=(\ell_{t})_{t\le T}\in \mathcal{L}(\ell,T)$ we have at most $d^{\sum_{t} \ell_t} = d^{\ell-T+1} \le d^{\ell}$ possibilities for the $y_t$. To chose $(\ell_{t})_{t\le T}$ we have again $2^{\ell}$ possibilities. We have
		
		$$\mathbb{P}\left(I_{k,i}\right) \le \frac{ (N)_{T-1} (4d)^{2\ell} (2d)^\ell }{(N)_{T}}\le \frac{(4d)^{3\ell}}{N}.$$
		We proceed similarly to bound the number of $\gamma$ verifying $I_{2}(k,k^\prime)$ and we obtain the exact same bound. Now summing over all $T$, $k$, $i$ and $k^\prime$ we obtain the result.
		
	\end{proof}
	
	Let $\mathbf{x}:=(x_1,...,x_k)$, $ \mathbf{y}:=(y_1,...,y_k) \in [N]^k$. We are looking for an estimate of
	$$\mathbb{E}\left(\prod_{t=1}^{k_0}\underline{M}_{x_t y_t}\prod_{t=k_0+1}^{k}M_{x_t y_t}\right)$$
	where $0 \le k_0 \le k$. To do this, we define the set of \textit{arcs} of $(\mathbf{x},\mathbf{y})$ as:
	$$\mathrm{A}_{\mathbf{x,y}}:=\{(x_t,y_t): t \in [k]\}.$$
	The \textbf{multiplicity} of $e \in \mathrm{A}_{\mathbf{x,y}}$ is given by $m(e):= \sum_{t=1}^k \mathds{1}((x_t,y_t)=e)$. An arc $e=(x,y)$ is said to be \textbf{consistent} if $\{t: (x_t,y_t)=(x,y)\}= \{t: x_t =x\}= \{t: y_t =y\}$. Otherwise, it is \textbf{inconsistent}. The following proposition is proved in \cite[Proposition 11]{bordenave2015new}.
	\begin{prop}\label{estimconsistent}
		There exists a constant $c>0$ such that for any $\mathbf{x}:=(x_1,...,x_k), \mathbf{y}:=(y_1,...,y_k) \in [N]^k$ with $2k \le \sqrt{N}$ and any $k_0 \le k$, we have
		
		$$\left|\mathbb{E}\left(\prod_{t=1}^{k_0}\underline{M}_{x_t y_t}\prod_{t=k_0+1}^{k}M_{x_t y_t}\right)\right| \le c 2^b \left(\frac{1}{N}\right)^a\left(\frac{3k}{\sqrt{N}}\right)^{a_1},$$
		where $a:=|\mathrm{A}_{\mathbf{x,y}}|$, b is the number of inconsistent arcs of $(\mathbf{x,y})$ and $a_1$ is the number of $1 \le t \le k_0$ such that $(x_t,y_t)$ is consistent and has multiplicity one.
	\end{prop}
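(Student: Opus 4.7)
My plan is to treat the expectation as a signed sum over subsets by expanding the centering, then evaluate each term via the explicit moment formula for entries of a uniform permutation matrix. Writing $\underline{M}_{x_t y_t} = M_{x_t y_t} - 1/N$ and expanding the first $k_0$ factors,
\begin{equation*}
\mathbb{E}\left[\prod_{t=1}^{k_0} \underline{M}_{x_t y_t} \prod_{t=k_0+1}^{k} M_{x_t y_t}\right] = \sum_{S \subseteq [k_0]} \left(-\frac{1}{N}\right)^{|S|} \mathbb{E}\left[\prod_{t \in [k] \setminus S} M_{x_t y_t}\right].
\end{equation*}
For uniform $\sigma \in \mathbb{S}_N$, the inner expectation vanishes unless the arc multiset indexed by $[k]\setminus S$ is consistent (each distinct $x$-coordinate paired with a unique $y$-coordinate and vice versa), in which case it equals $(N-a_S)!/N!$, where $a_S$ is the number of distinct surviving arcs. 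Under the hypothesis $2k \le \sqrt{N}$, a direct Taylor estimate of $\prod_{j<a_S}(1 - j/N)^{-1}$ yields $(N-a_S)!/N! \le 2\, N^{-a_S}$.

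Next I would use two structural observations to restrict the relevant subsets $S$. First, any inconsistent arc of multiplicity one whose unique index lies in $[k_0]$ must belong to $S$: otherwise the surviving arc system remains inconsistent and the moment vanishes. This forces the inclusion of the $a_1$ such indices, producing a mandatory factor $(1/N)^{a_1}$ from the coefficient. For each inconsistent arc of multiplicity $\ge 2$, the binary choice of which copies to include in $S$ contributes a multiplicative factor at most $2$ per arc, giving the $2^b$ factor. After the forced deletions the surviving system has at most $a - a_1$ distinct arcs, so the naive combination yields a bound of order $2^{b+1} N^{-a}$ per contributing subset.

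The remaining $(3k/\sqrt{N})^{a_1}$ improvement is what I expect to be the hardest part and is the main obstacle. The factor $k$ per singleton arc should come from the observation that an inconsistent singleton $(x_t, y_t)$ with $t \in [k_0]$ must share an endpoint with some other arc, and there are at most $k$ possible partner indices, injecting $k$ degrees of freedom that must be summed out. The $1/\sqrt{N}$ saving should emerge from the subleading term of the expansion $(N-a_S)!/N! = N^{-a_S}\prod_{j<a_S}(1-j/N)^{-1}$: after inclusion-exclusion over $S$ the leading-order $N^{-a_S}$ contributions cancel pairwise along singleton inconsistent arcs, and only the first Taylor correction survives, producing an extra $O(1/\sqrt{N})$ per such arc. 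I would attempt to make this precise by induction on $a_1$, peeling off one singleton inconsistent arc at a time and proving a multiplicative loss of at most $3k/\sqrt{N}$ per step after re-summing the reduced bookkeeping over $S$. A conceptually cleaner route would be a Weingarten-style expansion of $\mathbb{E}[\prod_t M_{x_t y_t}]$ in characters of $\mathbb{S}_N$, in which centered indicators correspond to non-trivial irreducible components whose dimension bounds supply the $1/\sqrt{N}$ saving automatically; the trade-off is heavier harmonic analysis on the symmetric group, so the inductive route seems more direct.
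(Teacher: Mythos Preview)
The paper does not give a proof of this proposition; it simply cites \cite[Proposition~11]{bordenave2015new}. Your expansion of the centered factors over subsets $S\subseteq[k_0]$ and the evaluation $\mathbb{E}\bigl[\prod_{t\notin S}M_{x_ty_t}\bigr]=(N)_{a_S}^{-1}\cdot\mathds{1}(\text{consistent})$ are exactly the opening moves of that reference, so the framework is right.

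The substantive gap is the mechanism behind the factor $(3k/\sqrt{N})^{a_1}$, and it is caused by what appears to be a typo in the statement as reproduced here: in Bordenave's original, $a_1$ counts the indices $t\le k_0$ for which $(x_t,y_t)$ is \emph{consistent} (not inconsistent) and of multiplicity one. This is also how the proposition is used downstream in the present paper: in the proof of Lemma~\ref{boundingmu(gamma)} the exponent is lower-bounded by $a_1'$, the number of \emph{consistent} multiplicity-one arcs. Your ``forced inclusion'' argument for inconsistent singletons is not valid anyway---the conflicting partner may itself lie in $[k_0]$ and be placed in $S$, so nothing forces $t\in S$---and even when the forcing does hold it only reproduces the baseline $N^{-a}$ with no additional saving.

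With the correct reading of $a_1$, the gain comes from a straightforward pairing rather than from vanishing. If $(x_t,y_t)$ is consistent of multiplicity one with $t\le k_0$, then $x_t$ and $y_t$ occur nowhere else, so for any $S'\subseteq[k_0]\setminus\{t\}$ the two choices $S=S'$ and $S=S'\cup\{t\}$ leave the remaining arc system identical and consistent, with distinct-arc counts $a'+1$ and $a'$. Their contributions combine to
\[
\Bigl(-\tfrac{1}{N}\Bigr)^{|S'|}\frac{1}{(N)_{a'}}\Bigl(\frac{1}{N-a'}-\frac{1}{N}\Bigr)
=\Bigl(-\tfrac{1}{N}\Bigr)^{|S'|}\frac{1}{(N)_{a'}}\cdot\frac{a'}{N(N-a')},
\]
which is smaller than either summand by a factor $O(k/N)\le 3k/\sqrt{N}$. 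Iterating this pairing over the $a_1$ consistent singleton indices yields the claimed bound; no induction on $a_1$, no Weingarten calculus is needed.
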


	\section{High trace method}\label{section:high_tr_meth}
	
	\subsection{Operator norm of $\underline{A}_{\xxx}^{(\ell)}$}\label{subsectionboundofP}
	
	In this section we prove the following assertion.
	
	\begin{prop}\label{boundingnormP}
		Let $(Q,M),~ \rho_{n,\xxx}$ be as defined in \cref{bigtheorem} and for $\xxx \in \{\sss, \ccc\}$ we consider $A_{\xxx}$ defined by \eqref{eq:1origins} and \eqref{eq:2origins}. For any $c >0$ there exists $C = C(c, \alpha, n, K) >0$ such that with probability at least $1 - N^{-c}$,
		$$\| \underline{A}_{\xxx}^{(\ell)} \| \le e^{C \ln(N)^{1-\alpha}} \rho_{n,\xxx}^{\ell}$$
		for any $1 \le \ell \le \frac{\ln(N)}{7K_{\xxx}(\ln(d) \vee 2\ln(\delta \vee 1))}$ where $K_{\xxx}$ is defined by \eqref{eq:defiK_x}.
	\end{prop}
	We give the details of computations in the case where $\xxx = \sss$. For any integer $m \ge 1$ and considering as convention $x_{2m+1}:= x_1$ we have
	
	\begin{align*}
		&\|\underline{A}_{\sss}^{(\ell)} \|^{2m} = \|\underline{A}_{\sss}^{(\ell)} {\underline{A}_{\sss}^{(\ell)}}^{\top}\|^m \le \Tr \{ \left( \underline{A}_{\sss}^{(\ell)} {\underline{A}_{\sss}^{(\ell)}}^{\top}\right)^m \}\\
		& = \sum_{x_1,...,x_{2m}} \prod_{i=1}^{m} (\underline{A}_{\sss}^{(\ell)})_{x_{2i-1},x_{2i}}(\underline{A}_{\sss}^{(\ell)})_{x_{2i+1},x_{2i}}\\
		& = \sum_{x_1,...,x_{2m}} \prod_{i=1}^{m}  \bigg\{ \sum_{\gamma_{2i-1} \in \mathrm{F}^{\ell}_{x_{2i-1},x_{2i}}} \prod_{t=1}^{T_{2i-1}} \underline{M}_{x_{2i-1,t}, y_{2i-1,t}} (Q^{\ell_{2i-1,t}})_{y_{2i-1,t}x_{2i-1,t+1}}\cdot \\
		&~~~~~~~~~~~~~~~~~~~~~~~~~~~~~~~~~~~~~~~~~~~~~~~~~~~~~~~~~~~~~~~~~~~~~~~~~\sum_{\gamma_{2i} \in \mathrm{F}^{\ell}_{x_{2i+1},x_{2i}}} \prod_{t=1}^{T_{2i}} \underline{M}_{x_{2i,t}, y_{2i,t}} (Q^{\ell_{2i,t}})_{y_{2i,t}x_{2i,t+1}} \bigg\}\\
		&= \sum_{x_1,...,x_{2m}} \displaystyle\sum_{\substack{\gamma_1,..., \gamma_{2m} \\ \gamma_{2i-1} \in \mathrm{F}^{\ell}_{x_{2i-1},x_{2i}} \\ \gamma_{2i} \in \mathrm{F}^{\ell}_{x_{2i+1},x_{2i}}} } \prod_{i=1}^{m}  \prod_{t=1}^{T_{2i-1}} \underline{M}_{x_{2i-1,t}, y_{2i-1,t}} (Q^{\ell_{2i-1,t}})_{y_{2i-1,t}x_{2i-1,t+1}} \cdot \prod_{t=1}^{T_{2i}} \underline{M}_{x_{2i,t}, y_{2i,t}} (Q^{\ell_{2i,t}})_{y_{2i,t}x_{2i,t+1}} 
	\end{align*}
	where we use the notation $\gamma_i =(x_{i,1},y_{i,1},\ell_{i,1},x_{i,2}..., y_{i,T_i},\ell_{i,T_i},x_{i,T_i +1}) \in \mathrm{F}^{\ell}$. For $\gamma_{2i-1} \in \mathrm{F}^{\ell}_{x_{2i-1}, x_{2i}}$ (\textit{resp.} for $\gamma_{2i} \in \mathrm{F}^{\ell}_{x_{2i+1},x_{2i}}$) we have with the previous notation $x_{2i-1,1}=x_{2i-1}$ (\textit{resp.} $x_{2i,1}=x_{2i+1}$) and $x_{2i-1, T_{2i-1}+1}=x_{2i}$ (\textit{resp.} $x_{2i,T_{2i}+1}= x_{2i}$).\\
	For $\vec{T} = (T_1,...,T_{2m}) \in [\ell]^{2m}$ we introduce the set $\mathrm{W}_{\ell, \vec{T}}$ of $\gamma=(\gamma_1,...,\gamma_{2m})$ such that for all $1 \le i\le 2m$, $\gamma_i=(x_{i,1},y_{i,1},\ell_{i,1},x_{i,2}..., y_{i,T_i},\ell_{i,T_i},x_{i,T_i +1}) \in \mathrm{F}^{\ell} $ and
	\begin{equation}\label{eq:condilim}
		x_{2i,1} = x_{2i+1,1} \qquad \text{ and } \qquad x_{2i-1, T_{2i-1}+1} = x_{2i,T_{2i}+1},
	\end{equation}
	with the convention $x_{2m+1,1}=x_{1,1}$. We then obtain as a first bound
	\begin{equation}\label{17}
		\|\underline{A}_{\sss}^{(\ell)} \|^{2m} \le \sum_{\vec{T} \in [\ell]^{2m}}\sum_{\gamma \in \mathrm{W}_{\ell,\vec{T}}} \prod_{i=1}^{m} \prod_{t=1}^{T_i } \underline{M}_{x_{i,t} y_{i,t}} (Q^{\ell_{i,t}})_{y_{i,t} x_{i,t+1}}.
	\end{equation}
	We want to estimate the expectation of the above sum using \cref{estimconsistent} and the fact that $\gamma$ is composed by $2m$ $\ell$-tangled free paths. We decompose $\mathrm{W}_{\ell,\vec{T}}$ using the size of the support for $\gamma$. For $\gamma=(\gamma_1,...,\gamma_{2m}) \in \mathrm{W}_{\ell,\vec{T}}$, we set $X_{\gamma} := \{x_{i,t}~; i \in [2m], ~ t \in [T_i+1] \}$ and 
	$Y_{\gamma}:= \{ y_{i,t}~; i\in [2m]~, t \in [T_i] \}$. We consider the graph $K_{\gamma}$ defined as follows. The vertex set is given by $X_{\gamma}$ and for any $x, x^{\prime} \in X_{\gamma}$, $\{x,x^{\prime}\}$ is a vertex if there exists \textit{times} $(i,t)$ and $(i^{\prime},t^{\prime})$ such that $i,i^{\prime} \in [2m]$ and $(t,t^{\prime})\in [T_i] \times [T_{i^{\prime}}]$ such that
	$$x_{i,t+1} = x,~~ x_{i^{\prime},t^{\prime}+1} = x^{\prime} ~~ \text{and} ~~ y_{i,t}= y_{i^{\prime},t^{\prime}}.$$
	
	The graph $K_{\gamma}$ induces an equivalence relation on the set $X_{\gamma}$ where the equivalence classes are the connected components of the graph. For $x \in X_{\gamma}$ we denote
	$$ \cc(x) := \text{connected component of } x.$$
	We also set the set of \textit{arcs} of $\gamma$, denoted $\mathrm{A}_{\gamma}$ as the set of distinct pairs $(x_{i,t},y_{i,t})$. We then consider the partition $\{\mathrm{W}_{\ell,\vec{T}}(s,a,p)\}_{s,a,p}$ where  $\mathrm{W}_{\ell,\vec{T}}(s,a,p)$ is the set of $\gamma \in \mathrm{W}_{\ell,\vec{T}}$ such that $|X_{\gamma}| = s$, $|\mathrm{A}_{\gamma}| =a$ and $|X_{\gamma}/\sim| = |\{\cc(x)~;~ x \in X_{\gamma}\}| = s-p$. We denote $G_{\gamma}= (X_{\gamma}/ \sim, \mathrm{A}_\gamma)$ the graph where for all arc $(x,y)=(x_{i,t},y_{i,t}) \in \mathrm{A}_{\gamma}$ we put an edge $(\cc(x_{i,t}),\cc(x_{i,t+1}))$ of color $(x,y)$.
	\begin{ex}
		We consider the path 
		$$\gamma = (2,1, \ell_1, 2, 3,\ell_2, 4,6,\ell_3,7,3,\ell_4,8).$$
		We represented the corresponding graphs $G_{\gamma}$ and $K_{\gamma}$ in \cref{fig:rpzgraphGgamma}. We have in this case $X_{\gamma} := \{2,4,8,7\}$, a unique edge in $K_{\gamma}$ given by $\{4,8\}$ (dashed in \cref{fig:rpzgraphGgamma}), $(X_{\gamma}/ \sim )= \{ \cc_1, \cc_2, \cc_3\}$ and $\mathrm{A}_{\gamma} = \{ (2,1), (2,3),(4,6),(7,3)\}$. One can also refer to \cite[Figure 4,5]{Brito_Dumitriu_Harris_2022} and \cite[Figure 3,4]{bordenave2015new} for representation of other examples of this graphs.
		\begin{center}
			\begin{figure}[h]
				\begin{tikzpicture}
					\begin{scope}[every node/.style={circle,thick,draw}]
						\node (C1) at (0,0) {$\cc_1$};
						\node (C2) at (3,0) {$\cc_2$};
						\node (C3) at (6,0) {$\cc_3$};
						\node (2x) at (9,0) {2};
						\node (1y) [shape=circle,draw=blue]  at (10,0) {{\color{blue}1}}; 
						\node (4x) at (11,0) {4};
						\node (3y) [shape=circle,draw=blue]  at (12,-0.5) {{\color{blue}3}}; 
						\node (8x) at (11,-1) {8};		
						\node (7x) at (13,0) {7};
						\node (6y) [shape=circle,draw=blue]  at (14,0) {{\color{blue}6}}; 
					\end{scope}
					
					\begin{scope}[>={Stealth[black]},
						every node/.style={fill=white,circle},
						every edge/.style={draw=green,very thick}]
						\path [->] (C1) edge node {$(2,3)$} (C2);
						\path [->] (C2) edge node {$(4,6)$} (C3);
						\path [->] (C1) edge[loop above] node {$(2,1)$} (C1); 
						\path [->] (C3) edge[bend left=60] node {$(7,3)$} (C2);
						\draw[color=blue] (2x) -- (1y);
						\draw[color=blue] (4x) -- (3y);
						\draw[color=blue] (8x) -- (3y);	\draw[dashed] (8x) -- (4x);		\draw[color=blue] (7x) -- (6y);
					\end{scope}
				\end{tikzpicture}	
				\caption{On the left hand side a representation of $G_{\gamma}$ and on the right hand side a representation of $K_{\gamma}$.}
				\label{fig:rpzgraphGgamma}
			\end{figure}
		\end{center}
	\end{ex}
	Proceeding similarly in the case where $\xxx =\ccc$ and taking the expectation of \eqref{17} we have
	
	$$\mathbb{E}\|\underline{A}_{\xxx}^{(\ell)} \|^{2m} \le \sum_{s,a,p,\vec{T}} \sum_{~\gamma \in \mathrm{W}_{\ell,\vec{T}}(s,a,p)} \mu(\gamma)q_{\xxx}(\gamma),$$
	where for $\gamma \in \mathrm{W}_{\ell,\vec{T}}$ we set
	\begin{equation}\label{mu}
		\mu(\gamma) := \mathbb{E} \prod_{i=1}^{2m} \prod_{t=1}^{T_i} \underbar{M}_{x_{i,t}y_{i,t}},
	\end{equation}
	and 
	\begin{equation}\label{eq:defiqashtag}
		~ q_{\sss}(\gamma):= \prod_{i=1}^{2m} \prod_{t=1}^{T_i} (Q^{\ell_{i,t}})_{y_{i,t}x_{i,t+1}}~~\text{and}~~	q_{\ccc}(\gamma):= (1-r)^{T}\prod_{i=1}^{2m} \prod_{t=1}^{T_i} (\{rQ\}^{\ell_{i,t}})_{y_{i,t}x_{i,t+1}}.
	\end{equation}
	To estimate the above sum we need to decompose further the set $\mathrm{W}_{\ell,\vec{T}}(s,a,p)$. For $\gamma, \gamma^{\prime} \in \mathrm{W}_{\ell,\vec{T}}(s,a,p)$, we say that $\gamma \sim \gamma^{\prime}$  if there exists a pair of permutations $(\alpha, \beta) \in \mathrm{S}_N$ such that $K_{\gamma^{\prime}}$ is the image by $\alpha$ of $K_{\gamma}$ i.e. for any $(i,t)$ we have $x^{\prime}_{i,t}= \alpha(x_{i,t})$ and $y_{i,t}^{\prime}= \beta(y_{i,t})$ (where we consider $\gamma^{\prime}= (\gamma_1^{\prime},...,\gamma_{2m}^{\prime})$ and for all $i \in [2m]$, $\gamma_i^{\prime} = (x_{i,1}^{\prime}, y_{i,1}^{\prime},\ell_{i,1}^{\prime},x_{i,2}^\prime,...,x_{i,T_i}^{\prime}, y_{i,T_i}^{\prime},\ell_{i,T_i}^{\prime},x_{i,T_i +1}^{\prime})$). We denote by $\mathcal{W}_{\ell,\vec{T}} (s,a,p)$ the set of equivalent classes. For all $\gamma^{\prime} \sim \gamma$, we have $\mu(\gamma)= \mu(\gamma^{\prime})$.

	where for $\gamma \in \mathrm{W}_{\ell,\vec{T}}$ we set $T = \sum_{i=1}^{2m} T_i$. Finally we have
	\begin{equation}\label{eq:htmethod}
		\mathbb{E}\|\underline{A}_{\xxx}^{(\ell)} \|^{2m} \le \sum_{s,a,p,\vec{T}} |\mathcal{W}_{\ell,\vec{T}}(s,a,p)| \underset{\gamma \in \mathrm{W}_{\ell,\vec{T}}(s,a,p)}{\max} \left\{ |\mu(\gamma)| \sum_{\substack{\gamma^{\prime} \in \mathrm{W}_{\ell,\vec{T}}(s,a,p)\\ \gamma^{\prime} \sim \gamma}} q_{\xxx}(\gamma^{\prime}) \right\}.
	\end{equation}
	First we bound the cardinality of $\mathcal{W}_{\ell,\vec{T}} (s,a,p)$.
	
	\begin{lem}\label{boundingWlTsap}
		We have
		$$|\mathcal{W}_{\ell,\vec{T}}(s,a,p)| \le  (\ell a (s-p)^2s^2)^{2m(g+5)}.$$
	\end{lem}
	The following section is dedicated to the proof of \cref{boundingWlTsap}.
	\subsection{Encoding equivalent classes}\label{section:encodingeqclass}
	In all this section we consider  the integers $1 \le T \le \ell$, $m \ge 1$ and $1 \le s,a,p \le 2m\ell$ fixed. To bound $|\mathcal{W}_{ \ell,\vec{T}} (s,a,p)|$ we need an efficient way to encode in a unique way an equivalent class $[\gamma]\in \mathcal{W}_{\ell,\vec{T}} (s,a,p)$. To proceed we start by introducing the order of apparition of the elements while we explore chronologically $\gamma$. This will allow us to code $[\gamma]$ by coding what we will call its \textit{canonical path}. To do so we consider $\boldsymbol{T}:= \{(i,t)~; ~ i\in [2m],~t \in [T_i]\}$ the set of times, that we consider ordered canonically, i.e. for $i< i^{\prime}$ we have for any $t \in [T_i]$ and $ t^\prime \in [T_{i^{\prime}}]$ $(i,t) \prec (i^\prime,t^{\prime})$ and for $1\le t<t^{\prime} \le T_{i}$ we have $(i,t) \prec (i,t^\prime)$. For $(i,t) \neq (1,1)$ we denote by $(i,t)^{-}$ the greatest element of $\boldsymbol{T}$ strictly smaller than $(i,t)$. For a time $(i,t) \in \boldsymbol{T}$ we set $\gamma_{i,t} :=(x_{i,t},{y}_{i,t},{x}_{i,t+1})$. For all $x \in X_{\gamma}$ (\textit{resp}. $y\in Y_{\gamma}$) we denote by $\bar{x}$ (\textit{resp}. $\bar{y}$) the order of apparition of $x$ (\textit{resp.} of $y$) in $\gamma$. We consider $\bar{\gamma}:= (\bar{x}_{1,1},\bar{y}_{1,1},\ell_{1,1},\bar{x}_{1,2},...,\bar{y}_{i,t},\ell_{i,t},\bar{x}_{i,t+1},...,\bar{y}_{2m,T_{2m}},\ell_{2m,T_{2m}},\bar{x}_{2m,T_{2m}+1})$ the canonical path and knowing $\bar{\gamma}$ is equivalent to know $[\gamma]$. In fact, we do not have necessarily   $\bar{\gamma} \in [\gamma]$, for instance  $\bar{\gamma}_{i}= (\bar{x}_{i,1},\bar{y}_{i,1},\ell_{i,1},\bar{x}_{i,2},...)$ can be tangled. However, if $K_{\bar{\gamma}}$ is constructed as we did for $K_{\gamma}$ then for all $\gamma^{\prime} \in [\gamma]$, the graph $K_{\gamma^{\prime}}$ is isomorphic to $K_{\bar{\gamma}}$.\\
	We define a graph which genius gives a first condition on the integers $s,a$ and $p$ for $\mathrm{W}_{\ell,\vec{T}}$ not to be empty. For $x\in X_{\gamma}$, we introduce  the \textit{color} of the connected component $\cc(x)$ defined by  $$\bar{c}(\bar{x}):= \min\{\bar{x}~;~ x \in \cc(x^{\prime})\}.$$
	For $(i,t)$ a time we denote by $\vec{x}_{i,t+1}:= (\bar{c}(\bar{x}_{i,t+1}),\bar{x}_{i,t+1})$ the \textit{connected component mark} of $x_{i,t+1}$. For example $\bar{x}_{1,1}= \bar{y}_{1,1}=1$ and $\vec{x}_{1,1}=(1,1)$. If $x_{1,2} \neq x_{1,1}$ then $\bar{x}_{1,2}=2$. One can notice that $x, x^{\prime}\in X_{\gamma}$ being in the same connected component might be an information posterior to the discovery of both $x$ and $x^\prime$. Therefore $x_{1,1}$ and $x_{1,2}$ might be in the same connected component and in that case we have $\vec{x}_{1,2}=(1,2)$. Otherwise we have $\vec{x}_{1,2} = (2,2)$. Finally we set $\bar{\gamma}_{i,t}:= (\vec{x}_{i,t},\bar{y}_{i,t},\vec{x}_{i,t+1})$ and we have that knowing $(\bar{\gamma}_{i,t})_{(i,t) \in \boldsymbol{T}}$ is equivalent to knowing the equivalence class of $\gamma$.\\
	The vertex set of the graph at stake here is $V_{\gamma}:=[s-p]\simeq X_{\gamma}/\sim$ thought as the set of connected components of $K_{\gamma}$ ordered by their order of apparition. For all $i\in [2m]$ and all $t \in [T_i]$ we define the edge $e_{i,t}:=(\bar{\cc}(x_{i,t}),\bar{\cc}(x_{i,t+1}))$ where $\bar{\cc}(x) \in [s-p]$ for $x\in X_{\gamma}$ is the order of apparition of $\cc(x)$ in $\gamma$. We also associate the color $(\bar{x}_{i,t},\bar{y}_{i,t})$ to the edge $e_{i,t}$. Finally we denote by $E_{\gamma}$ the set of colored edges $e_{i,t}$ for all $i\in [2m]$ and $t\in [T_i]$ and $G_{\gamma} = (V_{\gamma},E_{\gamma})$ the corresponding graph.
	\begin{lem}\label{lem:geniuspositive} 
		If $\mathrm{W}_{\ell,\vec{T}}(s,a,p) \neq \emptyset$ then $g:= a+p-s+1 \ge 0$.
	\end{lem}
	\begin{proof}
		In regard of the construction of $G_{\gamma}$ we have $|E_{\gamma}|= |A_{\gamma}|=a$. Besides, owing to \eqref{eq:condilim}, $G_{\gamma}$ is weakly connected and therefore if it is not empty then its genius $g$ is non-negative i.e.
		$$g:=|E_\gamma|-|V_{\gamma}|+1 = a - s+p +1 \ge 0.$$
	\end{proof}
	We give a second condition on the fixed integers of the section for $\mathrm{W}_{\ell,\vec{T}}$ to be not empty. This condition is given by following lemma.
	\begin{lem}\label{lem:conditionm+glep}
		If $\mathrm{W}_{\ell,\vec{T}}(s,a,p)\neq \emptyset$ then $2g + 2m \le p$.
	\end{lem}
	The rough idea behind the proof of the previous lemma can be described as follows. On the one hand we have that $p = \sum_{k} (|\cc_{k}|-1)$ where $\cc_k$ is the $k$-th discovered connected component of $K_{\gamma}$. Therefore it can be seen as the number of vertices that are discovered after we discovered the underlying connected component of $K_{\gamma}$. On the other hand, the number of excess edges $g$ and  $m$ the number of times where we impose the condition \eqref{eq:condilim} will  give control of the number of such vertices. To proceed we define an underlying covering tree $\Gamma \subset G_{\gamma}$ as the result of a chronologically growing forest $(\Gamma_{i,t})_{(i,t) \in \boldsymbol{T}}$ that will allow us to keep track of the times where vertices of $K_{\gamma}$ are discovered.\\
	We consider the sequence $(G_{i,t})_{(i,t)\in \boldsymbol{T}}$ of graphs such that for $(i,t) \in \boldsymbol{T}$, $G_{i,t}$ is the graph spanned by $e_{(j,s)}$ for all $ (j,s) \in \boldsymbol{T}$ such that $(j,s) \preceq (i,t)$. They are non-decreasing sub-graphs of $G_{\gamma}$ and we have $G_{2m,T_{2m}} = G_{\gamma}$. We then inductively define the sequence of non-decreasing forests of $G_{i,t}$ as follows. By convention $\Gamma_{1,0}$ is the graph with the vertex set $\{1\}$ and no edges. Then for a time $(i,t) \in \boldsymbol{T}$ we construct $\Gamma_{i,t}$ from $\Gamma_{(i,t)^{-}}$ as follows. We say that $(i,t)$ is a \textit{first time} if adding $e_{i,t}$ to the graph $\Gamma_{(i,t)^{-}}$ does not create any cycle. In this case $\Gamma_{i,t}$ is defined as the graph spanned by $\Gamma_{(i,t)^{-}}$ and the edge $e_{i,t}$.  If $(i,t)$ is not a first time we set $\Gamma_{i,t}= \Gamma_{(i,t)^{-}}$. We set $\Gamma:= \Gamma_{2m,T_{2m}}$, covering tree of $G_{\gamma}$. We call an \textit{excess edge} all edges $e \in G_{\gamma} \backslash \Gamma$ and we say that $(i,t)$ is an \textit{important time} if $e_{i,t}$ is an excess edge. We now give a sequence of $3$-tuple indexed on $\boldsymbol{T}_{\mathrm{imp}}$ the set of important time such that, knowing this sequence allows to recover $\bar{\gamma}$.\\
	In regard of the relation \eqref{eq:condilim} $\Gamma_{i,t}$ has at most two connected component. More precisely
	\begin{itemize}[label=]
		\item For $i$ odd $\Gamma_{i,t}$ is a tree.
		\item For $i$ even $\Gamma_{i,t}$ has at most two connected component and $\Gamma_{i,T_{i}}$ is connected.
	\end{itemize}
	For $i$ even we call \textit{merging time} the time $(i,t_i)$ at which $\Gamma_{i,t_i}$ becomes connected. Possibly we have $t_i = 1$ and if we have $t_i \ge 2$ then $(i,t_i)$ is also a first time. With these definitions we can now prove the previous lemma.
	\begin{proof}[Proof of \cref{lem:conditionm+glep}]
		For all $1 \le k \le s-p$, the first seen element of the component $\cc_k \in X_{\gamma}/\sim$ is discovered at a first time with a tree edge and the others are either discovered by an excess edge seen for the first time or at a merging time. Each excess edge or tree edge discovered at a merging time can discover two vertices that had not been seen before. Therefore we have $p$ bounded by twice the number of merging times (at most $m$) plus twice the number of excess edges.
	\end{proof}
	Exploring the path $\gamma$ through time $(i,t) \in \boldsymbol{T}$ is a successive repetition of the pattern (1)(2)(3) given by
	\begin{enumerate}[label=(\arabic*)]
		\item (possibly empty) a sequence of first time which are not merging times;
		\item (necessarily) an important time or a merging time;
		\item (possibly empty) a path using the colored edges of the tree defined so far.
	\end{enumerate}
	In (3) we only need the in-coming and out-going vertices to know what happened in the time in between.
	We may now build our first encoding by marking the important and merging times with information that will be sufficient to recover $(\bar{\gamma}_{i,t})_{(i,t) \in \boldsymbol{T}}$ entirely. For $(i,t)$ a time, we assume known $(\bar{\gamma}_{j,s})_{(j,s)\prec (i,t)}$ and we assume that so far we have seen $u$ vertices of $X_{\gamma}$ and $v$ of $Y_{\gamma}$. If $(i,t)$ is a first time and not a merging time then we can recover $\bar{\gamma}_{(i,t)}$. Indeed necessarily
	\begin{itemize}
		\item if $t\ge 2$ then we have $\vec{x}_{i,t} = \vec{x}_{i,(t-1)+1}$, $\vec{x}_{i,t+1}=(u+1,u+1)$ and $\bar{y}_{i,t}= v+1$,
		\item if $t=1$ and $i$ odd then we still have $\vec{x}_{i,t+1}= \vec{x}_{i,2}= (u+1,u+1)$ and $\bar{y}_{i,t}= \bar{y}_{i,2}=v+1$ but now we have $\vec{x}_{i,1}=\vec{x}_{i-1,1}$,
		\item finally if $t=1$ and $i$ even, $(i,1)$ not being a merging time implies here that both connected component seen at the time $(i,1)$ are new and therefore we have $\vec{x}_{i,1}=(u+1,u+1)$, $\bar{y}_{i,1}=v+1$ and $\bar{x}_{i,2}= (u+2,u+2)$. 
	\end{itemize}
	Notice that at a first time that is not a merging time, the $x \in X_{\gamma}$ discovered gives the color of its connected component. If $(i,t)$ is an important time, we first mark it with the vector $(\bar{y}_{i,t},\vec{x}_{i,t+1},\vec{x}_{i,\tau})$ where $(i,\tau)$ is the next time we step outside of the tree $\Gamma_{i,t} = \Gamma_{(i,t)^{-}}$. If for the remaining time of the era $i$ the path $\gamma$ stays in $\Gamma_{i,t}$ then we set $\tau = T_{i}+1$. If we have $\tau \neq T_i+1$ then $(i,\tau)$ is the next important, merging or first time. If $(i,t)$ is a merging time we also mark it $(\bar{y}_{i,t},\vec{x}_{i,t+1},\vec{x}_{i,\tau})$ where $(i,\tau)$ is once again the next important, merging or first time. For these two marked times, in the time in between $(i,t)$ and $(i,\tau)$ we only go through seen vertices of $K_{\gamma}$ and seen vertices of $\Gamma_{i,t}$ and therefore one can recover $\{\bar{\gamma}_{i,s}:~t \le s \le \tau\}$. Finally for $i\in [2m]$ odd, we call \textit{first outgoing time} $(i,t)$ the first time occurring that is either a first or important time and mark it with $(\vec{x}_{1,1},\bar{y}_{i,t},\vec{x}_{i,t+1},\vec{x}_{i,\tau})$ where $(i,\tau)$ is the next important time. If a first outgoing time is also a first time, in the time in between $(i,t)$ and $(i,\tau)$ we only see first times. If for $t\le s \le T_i$ we have only first times we set $\tau= T_i+1$. Therefore knowing the first outgoing, important and merging times and their marks we can recover all $\bar{\gamma}$. 
	\begin{rem}
		We can describe the recovering of $\bar{\gamma}$ as follows. One has to segment the recovering with each eras $i$. If the time $(1,1)$ is not marked then it a first time, therefore we can set that $\bar{x}_{1,1}=1$, $\bar{c}(1)=\bar{c}(\bar{x}_{1,1})=1$, $\bar{y}_{1,1}= 1$, $\bar{x}_{1,2}=2$ and finally $\bar{c}(2)=\bar{c}(\bar{x}_{1,2})=2$. Therefore we know that $\bar{\gamma}_{1,1}=((1,1),1,(2,2))$. If now $(1,1)$ is an important time, we still have $\bar{x}_{1,1}=1$, $\bar{c}(1)=1$ and $\bar{y}_{1,1}=1$ but then we have $\bar{c}(2)=1$ and we know the value of $\bar{x}_{1,2}$ from the mark on $(1,1)$. Since $\Gamma_{1,1}=\Gamma_{1,0}$ has no edge, necessarily the next time we step outside the tree is the next time, i.e the time $(1,2)$ is the next first or important time. We start by explaining, for an era $j = 2i$ how to recover $\bar{\gamma}_{j,s}$ for $1 \le s \le t$ where $(j,t)$ is the first marked time but not first outgoing time of the era $j$ considering that $\bar{\gamma}_{j^{\prime},s^\prime}$ for $ (1,1) \prec (j^\prime,s^\prime) \prec (j,1)$ are known. Let $(2i,t)$ be the first marked time and let $(\bar{y}_{2i,t}, \vec{x}_{2i,t+1},\vec{x}_{2i,\tau})$ be its mark. If $\bar{y}_{2i,t}$ (or $\bar{c}(\bar{x}_{2i,t+1})$) has never been seen during the preceding eras then it means $(2i,t)$ is an important time and in the time in between $(2i,1)$ and $(2i,t)^-$ we only have first times. We can recover then $\bar{\gamma}_{2i,s}$ for $1 \le s \le t$. If now $\bar{y}_{2i,t}$ or $\bar{c}(\bar{x}_{2i,t+1})$ has been seen before, then $(2i,t)$ is the merging time and once again we know that we only have first times between $(2i,1)$ and $(2i,t)^-$. Now we consider an era $j=2i+1$, $(2i+1, t)$ its first outgoing time and $(\vec{x}_{2i+1,1}, \bar{y}_{2i+1,t},\vec{x}_{2i+1,t},\vec{x}_{2i+1,\tau})$ its mark. Then there exists only one path between $\cc(x_{2i+1,1})$ and $\cc(x_{2i+1,\tau})$ and even though $\tau$ is not indexed in the mark it is given by the length of this path. Now if there exists another marked time $(2i+1,t^\prime)$ in the same era $2i+1$, it is necessarily an important time. If we assume $\tau \neq t^\prime$, we have $\bar{\gamma}_{2i+1,s}$ for $1 \le s \le \tau$ given by the only path between $\cc(x_{2i+1,1})$ and $\cc(x_{2i+1,\tau})$. We then recover $\bar{\gamma}_{2i+1,s}$ for $\tau+1 \le s \le t-1$ from the fact that $\{(2i+1,s):~ \tau+1 \le s \le t^\prime-1\}$ are first times. If $\tau = t^\prime$ then $\bar{\gamma}_{2i+1,s}$ for $1 \le s \le \tau = t^\prime$ is given by the only path between $\cc(x_{2i+1,1})$ and $\cc(x_{2i+1,t^\prime})$. If there is no other marked time in the era $2i$ then we only have first times (and $\tau = T_{2i+1}+1$).
		We now consider $(i,t)\prec (i,t^\prime)$ two consecutive marked times of a same era $i$ and we suppose $(\bar{\gamma}_{j,s})_{(j,s)\prec (i,t)}$ is known. Let $(\bar{y}_{i,t}, \vec{x}_{i,t+1}, \vec{x}_{i,\tau})$ be the mark on $(i,t)$ and $(\bar{y}_{i,t^\prime}, \vec{x}_{i,t^\prime+1}, \vec{x}_{i,\tau^\prime})$ the one on $(i,t^\prime)$. We have only one path (possibly empty) possible between $\cc(x_{i,t+1})$ and $\cc(x_{i,\tau})$. The length of this path and the path itself gives us the value of $\tau$ and $\bar{\gamma}_{i,s}$ for $t \le s \le \tau$. If $\tau = t^\prime$ then we have recovered $\bar{\gamma}_{i,s}$ for $ t\le s \le t^\prime$. Otherwise all $(i,s)$ for $ \tau \le s \le t^\prime$ are necessarily first times.
	\end{rem}
	With the previous encoding, for each important time or merging time we have at most $a(s-p)^2s^{2}$ ways to mark it. We have at most $m$ merging times. Finally we have at most $m$ first outgoing time that have each $a(s-p)^3s^{3}$ ways to be marked. We have at most $2m\ell$ ways to position the important times. However with no further details on our path, upper bounding roughly the number of important times $|\boldsymbol{T}_{\mathrm{imp}}|$ by $2m\ell - s+p$ is too big to be implemented in \eqref{eq:htmethod}. Therefore we are going to take into account the tangle-freeness of our paths $\gamma_i$ to make some important times superfluous. Meaning that there will be more specific important times that -with the good modified encoding, will allow us to not take into account the other ones.\\
	To proceed we consider each era $i$ separately. We use the fact that $\gamma_i$ being tangle-free, if we explore a cycle during the era $i$ we know it to be the only one. For $i\in [2m]$ let $(i,t_0)$ be the smallest time such that $\cc(x_{i,t_0+1}) \in \{\cc(x_{i,1}),\cc(x_{i,2}),...,\cc(x_{i,t_0})\}$, if it exists. Let $1 \le \sigma\le t_0$ be such that $\cc(x_{i,t_0+1})=\cc(x_{i,\sigma})$. We denote by $\mathrm{C}_{i}:= (\bar{\cc}(x_{i,\sigma}),\bar{\cc}(x_{i,\sigma+1}),..., \bar{\cc}(x_{i,t_0+1}))$ and $\mathrm{C}_i$ is the only cycle visited by $\gamma_i$. We define the \textit{short cycling time} as the greatest important time $(i,t)$ smaller then $(i,t_{0})$ in $(\boldsymbol{T},\prec)$ and $(i,\hat{t})$ the smallest time $(i,\hat{t}) \succeq (i,\sigma)$ verifying that $\cc(x_{i,\hat{t}+1}) \notin \mathrm{C}_i$. As convention we take $\hat{t}=T_i +1$ if $\gamma_i$ stays in $\mathrm{C}_i$ for the remaining time of the era $i$. If $\hat{t} \ge t_{0}+2$ then the cycle $\mathrm{C}_i$ is visited more then once, even though it may not finish the cycle after the first visit. We replace the mark on $(i,t)$ with $(\bar{y}_{i,t},\vec{x}_{i,t+1},\sigma,\hat{t},\vec{x}_{i,\tau})$ where now $(i,\tau)$ is the next important time after $(i,\hat{t})$. We take as convention that $\tau = T_{i}+1$ if $\gamma_i$ stays in $\Gamma_{(i,t)}$ for the remaining time of the era $i$. During the era $i$ we have only one short cycling time. The important times $(i,t^{\prime})$ that verify $1\le t^{\prime} \le t$ or $ \tau \le t^{\prime} \le T_{i}$ are called \textit{long cycling time}. The important times in the time in between $(i,t)$ and $(i,\tau)$ are called \textit{superfluous} and are not marked anymore. We can now prove \cref{boundingWlTsap}.
	\begin{proof}[Proof of \cref{boundingWlTsap}]
		The previous encoding also allows us to recover $\bar{\gamma}$ from the marks on the long cycling and short cycling times (see proof of \cite[Lemma 3]{bord2018spectral}). If an excess edge is seen twice in the era $i$ outside of the time interval $(i,t)^{+}$ and $(i,\tau)$ it would mean that we discover another cycle in $\gamma_i$, which is impossible. Also the excess edge seen at the short cycling time $(i,t)$ cannot be seen by a long cycling time either, for the same reason. We have the following for each kind of marked times. We have at most $g$ long cycling times for each era $i\in [2m]$. For each long cycling time we have at most $a(s-p)^2 s^2$ ways to mark it. We also have possibly $m$ merging times, and again at most $a(s-p)^2 s^2$ to mark one of them. Similarly we have at most $m$ first outgoing time. For each first outgoing time we have at most $a(s-p)^3 s^3$ ways to mark it. Finally at each era we have at most one short cycling time with at most $a(s-p)^2s^2 \ell^{2}$ ways to mark it. In each era, for any short cycling, long cycling or first outgoing time, we have at most $T_i \le \ell$ ways of positioning the time in the era. Therefore we  have
		\begin{align*}	|\mathcal{W}_{\ell,\vec{T}}(s,a,p)|&\le [\ell a(s-p)^2 s^2 ]^{2mg} [\ell a (s-p)^2 s^2 \ell^2]^{2m} [\ell a (s-p)^2s^2]^{m}[\ell a(s-p)^3 s^3]^m\\
			&\le [\ell a (s-p)^2s^2]^{2m(g+5)}.
		\end{align*}
		
	\end{proof}
	
	\subsection{Contributions of the matrix $Q$ for a fixed path and proof of \cref{boundingnormP}}\label{section:contribQ}
	In this section we fix the integers $\ell, m \ge 1$ such that 
	\begin{equation}\label{eq:fix_of_lm}
		1 \le \ell \le  \frac{\ln(N)}{6(\ln(d)\vee 2\ln(\delta\vee 1))} \text{ and } 1 \le m \le \ln(N).
	\end{equation}
	We also fix the integers $0 \le s,a,p \le 2m\ell$ and consider $[\gamma] \in \mathcal{W}_{\ell,\vec{T}}(s,a,p)$ for some $\vec{T} \in [\ell]^{2m}$ and we denote by $a_1$ the number of inconsistent edges $e \in \mathrm{A}_{\gamma}$ of multiplicity $1$ (see \cref{estimconsistent}).
	
	\begin{lem}\label{boundingsumq(gamma)} 
		There exists $c,c(n)\ge1$ such that
		$$ \sum_{\gamma^{\prime} \sim \gamma} q_{\xxx} (\gamma^{\prime})  \le c(n)N^{s-p}(cm\ell)^{a_1 + 3(m+g) +p} \rho_{n,\xxx}^{2m\ell}. $$
	\end{lem}
	To prove the previous lemma we need the following definition and lemmas.
	\begin{defi}\label{def:graphbipart}
		Let $\mathcal{G}=(\mathcal{V},\mathcal{E})$ be a directed graph with $\mathcal{E}$ being a multi-set, and connected if we do not take into account the orientation (i.e. \textup{weakly connected}). We also consider that $\mathcal{G}$ is bi-partite in the following sense. There exists $\mathcal{X}\bigsqcup\mathcal{Y}$ partition of $\mathcal{V}$ such that for all $e=(u,v) \in \mathcal{E}$, we have $u\in \mathcal{X}$ and $v\in \mathcal{Y}$. Let $\mathcal{T} \subset \mathcal{G}$ be (any) underlying covering tree of $\mathcal{G}$ and $\mathcal{R}=\mathcal{G}\backslash\mathcal{T}$. 
		\begin{itemize}
			\item We say that $(e, e^\prime)\in \mathcal{E}^2$ are a \textup{double edge} if both $e$ and $e^\prime$ goes from $u \in \mathcal{X}$ to $v\in \mathcal{Y}$ and $e \in \mathcal{T}$.
			\item For $e, e^\prime \in \mathcal{R}$ we denote $e\sim e^{\prime}$ when  both $e$ and $e^\prime$ goes from $u\in \mathcal{X}$ to $v \in \mathcal{Y}$.
		\end{itemize}
	\end{defi}
	\begin{lem}\label{lem:boundingpdtZ(G)}
		Let $\mathcal{G}= (\mathcal{X}\sqcup\mathcal{Y},\mathcal{E})$ be a bi-partite connected  and directed graph in the sense of \cref{def:graphbipart} and $\mathcal{T}$ an underlying tree. If there is a double edge in $\mathcal G$ we denote it $(e,e^\prime) \in \mathcal{T} \times (\mathcal{E}\backslash \mathcal{T})$. Let $(\ell(f))_{f \in \mathcal{E}}$ be a family of integers. We define 
		
		$$Z(\mathcal{G}):= \sum_{\substack{y_{u} \in [N]\\ u \in \mathcal{\mathcal{Y}}}}\sum_{\substack{x_{v} \in [N]\\ v \in \mathcal{\mathcal{X}}}} \prod_{\substack{f \in \mathcal{E} \\ f=(u,v)}} (Q^{\ell(f)})_{y_u x_v}~~ \text{and} ~~L_{\mathcal T} := \sum_{f \in \mathcal{T} \backslash\{e\}} \ell (f),$$
		where $\{e\} = \emptyset$ if there is no double edge. If we have $\mathcal{G} = \mathcal{T} \sqcup \{e^\prime\}$ and $(e,e^\prime)$ a double edge, then we have
		$$Z(\mathcal{G}) \le N \Tr_{N} \left(Q^{\ell(e)}{Q^{\ast}}^{\ell(e^\prime)} \right) \delta^{L_{\mathcal T}}.$$
		If $\mathcal{G} = \mathcal{T}$ we have
		$$Z(\mathcal T) \le N \delta^{L_{\mathcal T}}.$$
	\end{lem}
	
	\begin{proof}[Proof of \cref{lem:boundingpdtZ(G)}]
		To ease notations we denote by $\mathcal{T}$ the underlying tree but also the set of edges of this same underlying tree. We proceed by induction on $|\mathcal{T}| = |\mathcal X|+ |\mathcal Y| -1$, where the equality is due to the the fact that $\mathcal{T}$ is a covering tree on a set of cardinal $|\mathcal X|+ |\mathcal Y|$. In the case where $|\mathcal{T}| =1$ and $\mathcal{G} $ has a double edge we have
		\begin{align*}
			Z(\mathcal{G}) &=  \sum_{x,y} (Q^\ell(e))_{yx}(Q^{\ell(e^\prime)})_{yx} = N \Tr_{N} \left( Q^{\ell(e)} {Q^\ast}^{\ell(e^\prime)} \right).
		\end{align*}
		In the case where there is no double edge and $|\mathcal{T}| =1 $, then necessarily we have only one edge and therefore $$Z(\mathcal{G}) =  \sum_{x,y} (Q^{\ell(f)})_{yx} \le N \delta^{\ell(f)}.$$
		We now assume $|\mathcal{T}| \ge 2$. We consider $\{e, e^\prime\}$ the possibly empty set of double edge and $e_0=(u_0,v_0)$ a leaf of $\mathcal{T}$ that is not $e$. Such a leaf exists since for any tree with more than two edges there exists at least two different leaves. Since we are considering an oriented graph that is weakly connected, denoting $\mathrm{d}_{\mathcal{T}}^{\mathrm{o}}(\cdot)$ the weak degree in $\mathcal{T}$, one needs to distinguish whether we have $\mathrm{d}_{\mathcal{T}}^{\mathrm{o}}(u_{0})=1$ or $\mathrm{d}_{\mathcal{T}}^{\mathrm{o}}(v_{0})=1$. We consider the first case and we have
		\begin{align*}
			Z(\mathcal{G}) &=  \sum_{\substack{(y_{u},x_{v})  \in [N]^2\\ (u, v) \in  \mathcal{Y}\backslash \{u_0\}\times \mathcal{X}} } \prod_{f \in \mathcal{T} \backslash \{e_{0} \}} (Q^{\ell(f)})_{y_u x_v} \sum_{y_{u_0}  \in [N]} (Q^{\ell (e_{0})})_{y_{u_0}x_{v_0}}.
		\end{align*}
		We then bound the sum over $y_{u_{0}} \in [N]$ by $\delta^{\ell(e_{0})}$ and apply the induction to $\mathcal G^\prime = (\mathcal X \sqcup (\mathcal Y \backslash \{u_0\}), \mathcal E \backslash \{e_{0} \})$. In the case where we have $\mathrm{d}^{\mathrm{o}}_{\mathcal{T}}(v_{0}) =1$ we similarly obtain
		\begin{align*}
			Z(\mathcal{G}) &=  \sum_{\substack{(y_{u},x_{v})  \in [N]^2\\ (u, v) \in  \mathcal{Y}\times \mathcal{X}\backslash \{v_0\}}} \prod_{f \in \mathcal{T} \backslash \{e_{0} \}} (Q^{\ell(f)})_{y_u x_v} \sum_{x_{v_0}  \in [N]} (Q^{\ell (e_{0})})_{y_{u_0}x_{v_0}}.
		\end{align*}
		We then bound the sum over $x_{v_{0}} \in [N]$ by $\delta^{\ell(e_{0})}$ and apply the induction to $\mathcal G^\prime = ((\mathcal X\backslash \{v_0\} )\sqcup \mathcal Y , \mathcal E \backslash \{e_{0} \})$. 
	\end{proof}
	For $\gamma \in \mathrm{W}_{\ell,\vec{T}}(s,a,p)$, we rewrite $q(\gamma)$ as follows
	$$q_{\sss}(\gamma)= \prod_{i=1}^{2m}\prod_{t=1}^{T_i} (Q^{\ell_{i,t}})_{y_{i,t},x_{i,t+1}}=\prod_{\cc \in \chi} \prod_{\substack{f\in \mathcal{E}_{\cc}\\ f=(u,v)}} (Q^{\ell(f)})_{x_u,x_v}$$
	where $\chi := X_{\gamma}/\sim$ are the connected components of $K_{\gamma}$ and for $\cc \in \chi$ we considered the graph $\mathcal{G}_{\cc}:=(\mathcal{V}_{\cc},\mathcal{E}_{\cc})$ defined by $\mathcal{V}_{\cc}:= \{\bar{x}; ~ x\in \cc\} \sqcup \{\bar{y};~ \exists (i,t) \in \boldsymbol{T},~x \in \cc;~(y_{i,t},x_{i,t+1})= (y,x)\}=: \mathcal{X}_{\cc} \sqcup \mathcal{Y}_{\cc}$ and
	$\mathcal{E}_{\cc}$ is the multi-set defined by $\mathcal{E}_{\cc}:= \{(u,v)\in \mathcal{V}_{\cc}^2;~~ \exists (i,t) \in \boldsymbol{T};~~ (\bar{y}_{i,t},\bar{x}_{i,t+1})= (u,v)\}$. The notations are the one introduced in the proof of \cref{boundingWlTsap} and the graphs $(\mathcal{G}_{\cc})_{\cc \in \chi}$ are directed, weakly connected and bi-partite in the sense of \cref{def:graphbipart}. In the writing of $q(\gamma)$ above we have considered that for $u= \bar{x}$ we set $x_{u}= x$. We consider $\mathcal{T}_{\cc}$ the covering tree obtained by constructing the tree while we explore $\gamma$ chronologically and we set $\mathcal{R}_{\cc} := \mathcal{G}_{\cc} \backslash \mathcal{T}_{\cc}$. We introduce $\chi_{\mathrm{d}} \subset \chi$ the set of connected component $\cc \in \chi$ such that there exists $(e(\mathrm{cc}),e^\prime(\mathrm{cc}))$ a double edge in $\mathcal{G}_{\cc}$. We consider the graphs
	$$\mathcal{G} = (\mathcal{X} \sqcup \mathcal{Y}, \mathcal{E}) :=\bigsqcup_{\cc\in \chi}\mathcal{G}_{\cc},~~ \mathcal{F}= \bigsqcup_{\cc\in \chi} \mathcal{T}_{\cc}  \backslash\{e(\cc)\} ~~ \mathcal{R}:= \bigsqcup_{\cc\in \chi} \mathcal{R}_{\cc} \backslash \{e^\prime(\cc)\}, ~\text{and}~\mathcal{D} := \bigsqcup_{\cc\in \chi} \{e(\cc),e^{\prime}(\cc)\}$$
	where for $\cc\notin \chi_{\mathrm{d}}$ we have $\{e(\cc)\}=\{e^\prime(\cc)\}= \{\emptyset\}$. 	We set 
	\begin{equation}\label{eq:defiF}
		F:= |\mathcal{F}| ~\text{and}~ R:= |\mathcal{R}|.
	\end{equation}
	In order to prove \cref{boundingmu(gamma)}, we provide an upper bound for $F$.
	\begin{lem}\label{lem:upperboundF}
		Let $m,\ell \ge 1$, $1 \le s,a,p \le 2m\ell$ and $\vec{T}\in [\ell]^{2m}$ be fixed. For all $\gamma \in W_{\ell,\vec{T}}(s,a,p)$, considering the corresponding $F$ given by \eqref{eq:defiF} we have
		\begin{equation*}
			F \le 3(m+g) -2 +a_1 + p.
		\end{equation*}
	\end{lem}
	We use the previous lemmas to prove \cref{boundingsumq(gamma)}.
	\begin{proof}[Proof of \cref{boundingsumq(gamma)}]

		We give the detail of the upper bound for $q_{\sss}$ defined by \eqref{eq:defiqashtag} and we precise along the proof where the reasoning needs to be adapted for $q_{\ccc}$. \\
         For $\gamma^\prime=(x^\prime_{1,1},y^{\prime}_{1,1},\ell_{1,1}^{\prime},x^{\prime}_{1,2},..., x^{\prime}_{2m, T_{2m}+1}) \sim \gamma$ and considering the weighted graph defined above we set 
        \begin{equation*}
        L_{\mathcal R}(\gamma^\prime) = \sum_{f \in \mathcal R} \ell^{\prime}(f).
        \end{equation*}

        We first bound our sum as follows
        \begin{align*}
            \sum_{\gamma^\prime \sim \gamma} q_{\sss} \le \sum_{\gamma^\prime \sim \gamma} \kappa^{L_{\mathcal R}(\gamma^{\prime})} \prod_{\cc \in \chi } \prod_{f \in \mathcal{T}_{\cc} \sqcup \{e^{\prime}(\cc)\}} (Q^{\ell^{\prime}(f)})_{x^{\prime}_u x^\prime_{v}}.
        \end{align*}
        For a fixed vector $\vec{\ell} = (\ell_{11},..,\ell_{it},...,\ell_{2m,T_{2m}})$ we have 
\begin{align}
    \sum_{\substack{\gamma^{\prime} \sim \gamma\\ \vec{\ell}(\gamma^\prime) = \vec{\ell} } } q_{\sss}(\gamma^\prime) &\le \kappa^{L_{\mathcal R}(\gamma^{\prime}) }\sum_{\substack{y_{u},x_{v} \in[N]\\ (u,v) \in \mathcal{Y} \times \mathcal X}} \prod_{f \in \mathcal{T}_{\cc}\sqcup \{e^\prime(\cc)\}} (Q^{\ell (f)})_{x_u x_v}\notag \\
    &\le N^{s-p} \delta^{L_{\mathcal F}} \kappa^{L_{\mathcal R}}\prod_{\cc \in \chi_{\mathrm{d}}} \Tr_N \left( Q^{\ell(e(\cc))}{Q^{\ast}}^{\ell(e^\prime(\cc))}\right)\label{eq:improv}
\end{align}

where we applied \cref{lem:boundingpdtZ(G)}. For a finite set $\mathcal S$ and an integer $I$ we introduce
		\begin{equation*}
			\mathcal{L}^\prime (\mathcal S, I) := \left\{ (\ell(s))_{s\in \mathcal S} \in [\ell]^{\mathcal S}:~ \sum_{s \in \mathcal S} \ell(s) =I\right\}.
		\end{equation*}
        We set  $T:= \sum_{i=1}^{2m} T_i$ and $M = 2m (\ell + 1) -T $. We bound $|\mathcal{L} (L_{\mathcal F} , \mathcal F) |$ by $\ell^{F}$ and
        summing over all vector $\vec{\ell}$ possible we obtain
        \begin{align}\label{eq:before_last_ineq}
            \sum_{\gamma^{\prime} \sim \gamma } q_{\sss} \le N^{s-p}\ell^F \kappa  \sum_{L_{\mathrm{d}}+ L_{\mathcal F} + L_{\mathcal R}=M} (2\kappa)^{L_{\mathcal R} + R -1} \delta^{L_{\mathcal F}}\sum_{\substack{L^{1}_\mathrm{d}+L^{2}_\mathrm{d}=L_{\mathrm d} \\ (\ell_{\cc}^i)_{\cc \in \chi_{\mathrm d}} \in \mathcal L(L_{\mathrm d}^i, \chi_{\mathrm d})}} \prod_{\cc \in \chi_{\mathrm d}} \Tr_N ( Q^{\ell^1(\cc)}{Q^{\ast}}^{\ell^2(\cc)} )
        \end{align}
where we used 
\begin{align}\label{eq:appear2kappa}
    \sum_{(\ell(f))_{f \in \mathcal R}\in \mathcal L (L_{\mathcal R}, \mathcal R)} \kappa^{L_{\mathcal R}} = \binom{R-1}{L_{\mathcal R} + R- 1} \kappa^{L_{\mathcal R}} \le (2\kappa)^{L_{\mathcal R} + R -1} \kappa. 
\end{align}
		For $0 \le L_{\mathrm{d}}^{1} +   L_{\mathrm{d}}^2 = L_{\mathrm{d}}$ fixed, denoting $\mathrm{x}_{\mathrm d} = | \chi_{\mathrm d}|$ we have
		\begin{align}
			\sum_{ (\ell^i_{\cc})_{\cc} \in \mathcal{L}^\prime( \chi_{\mathrm d}, L_{\mathrm{d}}^i) } \prod_{\cc \in \chi_\mathrm{d}} \Tr_N\left( Q^{\ell^1_{\cc}} {Q^\ast}^{\ell^2_{\cc}}\right) &\le  \sum_{ (\ell^i_{\cc})_{\cc} \in \mathcal{L}( \chi_{\mathrm d}, L_{\mathrm{d}}^i+\mathrm{x}_{\mathrm{d}}) } \prod_{\cc \in \chi_\mathrm{d}} \Tr_N\left( Q^{\ell^1_{\cc}} {Q^\ast}^{\ell^2_{\cc}}\right) \label{eq:sum_tracesQ}\\
			&\le \sum_{T=0}^{L_{\mathrm{d}}^1 \land L_{\mathrm{d}}^2 + \mathrm{x}_{\mathrm{d}}}\sum_{ (\ell^i_{t})_{t} \in \mathcal{L}( T, L_{\mathrm{d}}^i+\mathrm{x}_{\mathrm{d}}) } \prod_{t=1}^T \Tr_N\left( Q^{\ell^1_{t}} {Q^\ast}^{\ell_{t}^2}\right). \notag
		\end{align}
		Recall the definition of $\mathrm{b}_{\xxx}$ given by \eqref{eq:defaandp}. We define the rest
		\begin{align*}
			\mathrm{R}_{\sss}:=\sum_{T=0}^{L_{\mathrm{d}}^1 \land L_{\mathrm{d}}^2 + \mathrm{x}_{\mathrm{d}}}\sum_{ (\ell^i_{t})_{t} \in \mathcal{L}( T, L_{\mathrm{d}}^i+\mathrm{x}_{\mathrm{d}}) } \prod_{t=1}^T \Tr_N\left( Q^{\ell^1_{t}} {Q^\ast}^{\ell_{t}^2}\right) -  \Phi({\mathrm{b}}_{\sss}^{L^1_{\mathrm{d}}+\mathrm{x}_{\mathrm d}} {{\mathrm{b}}^\ast_{\sss}}^{L^2_{\mathrm{d}}+\mathrm{x}_{\mathrm d}}) 
		\end{align*}
		and we have
		\begin{align*}
			\mathrm{R}_{\sss} &= \sum_{T=0}^{L_{\mathrm{d}}^1 \land L_{\mathrm{d}}^2 + \mathrm{x}_{\mathrm{d}}}\sum_{ (\ell^i_{t})_{t} \in \mathcal{L}( T, L_{\mathrm{d}}^i+\mathrm{x}_{\mathrm{d}}) } \sum_{k=1}^T \sum_{\substack{V \subset T\\ \abs{V} =k}} \frac{\delta^{\sum_{t\in V} \ell^1_{t} + \ell^2_t}}{N^k}\prod_{t \notin V} \Tr_N\left( Q_{\perp}^{\ell^1_t} {Q_{\perp}^\ast}^{\ell_t^2}\right)\\
			&=\sum_{k=1}^{L_{\mathrm{d}}^1 \land L_{\mathrm{d}}^2 + \mathrm{x}_{\mathrm{d}}} \sum_{T=k}^{L_{\mathrm{d}}^1 \land L_{\mathrm{d}}^2 + \mathrm{x}_{\mathrm{d}}}\sum_{L^i_{\mathrm d, k} \le L^i_{\mathrm d} }\binom{T}{k}|\mathcal{L}^\prime(L^1_{\mathrm{d},k},k)||\mathcal{L}^\prime(L^2_{\mathrm{d},k},k)|\frac{\delta^{2\ell k}}{N^k}\sum_{\ast } \prod_{t=k+1}^T \Tr_N\left( Q_{\perp}^{\ell^1_t} {Q_{\perp}^\ast}^{\ell_t^2}\right)
		\end{align*}
		where the last summand is over all $(\ell_{t}^i)_t \in \mathcal{L}(T-k, L^i_{\mathrm d} +\mathrm{x}_{\mathrm{d}} - L_{\mathrm d, k}^i-k)$ for $i\in \{ 1,2\}$. The last sum above verifies 
		\begin{align*}
			\abs{\sum_{\ast } \prod_{t=k+1}^T \Tr_N\left( Q_{\perp}^{\ell^1_t} {Q_{\perp}^\ast}^{\ell_t^2}\right)} &\le \sum_{\ast } \prod_{t=k+1}^T \abs{\Tr_N\left( Q_{\perp}^{\ell^1_t} {Q_{\perp}^\ast}^{\ell_t^1}\right)\Tr_N\left( Q_{\perp}^{\ell^2_t} {Q_{\perp}^\ast}^{\ell_t^2}\right)}^{1/2}\\
			&\le \sum_{\ast } \prod_{t=k+1}^T \| Q_{\perp}^{\ell^1_t} \|\| Q_{\perp}^{\ell^2_t}\|\le (2 \|Q_{\perp}\|)^{L_{\mathrm{d}}^1+L_{\mathrm{d}}^2+ 2\mathrm{x}_{\mathrm{d}}-L_{\mathrm{d},k}^1-L_{\mathrm{d},k}^2}
		\end{align*}
		where for the last two inequalities we applied \cref{formuletrace} and used that for $ b,h \in \mathcal{A}$
		$$\Phi(bh^{*}) \le \sqrt{\tau(bb^{*}) \tau(hh^{*})}, ~~ \text{and} ~~ \tau(b^{\ast}b)= \mathbb{E}[\|b^{\ast}b(e_{0})\|_2^2]\le\|b\|^2.$$
		The integer $n$ being fixed, for all integer $L=\mathrm{q} n + \mathrm{r} \ge 1$ with $0 \le \mathrm{r} \le n-1$, We set $c(n) := (\kappa\vee 1) (1+\delta^{\prime})^{n}$ and we have
		\begin{equation}\label{eq:use_sub_norm}
			{\mathrm{N}}_{L,\sss}^L= \norme{{\mathrm{b}}_{\sss}^L} \le \norme{{\mathrm{b}}_{\sss}^{n}}^{\mathrm{q}} \norme{{\mathrm{b}}_{\sss}}^{\mathrm{r}} \le {\mathrm{N}}_{n,\sss}^{L-\mathrm{r}}(1+ |\lambda_2(Q)|)^{n}\le c(n)  {\mathrm{N}}_{n,\sss}^L 
		\end{equation}
		where we used that the operator norm is sub-multiplicative.
		Besides we have $| \mathcal{L}^{\prime}(L,k)| \le \ell^{k}$ and therefore we obtain 
		\begin{align*}
			\mathrm{R}_{\sss} &\le 2 m \ell c(n)^2\sum_{k=1 }^{L_{\mathrm{d}}^1 \land L_{\mathrm{d}}^2 + \mathrm{x}_{\mathrm{d}}} \sum_{T=k}^{L_{\mathrm{d}}^1 \land L_{\mathrm{d}}^2 + \mathrm{x}_{\mathrm{d}}} \binom{T}{k}\left( \frac{\ell^2\delta^{2\ell}}{N} \right)^k \sum_{L_{\mathrm d,k} \le L_{\mathrm d}} {{\mathrm{N}}_{n,\sss}}^{L_{\mathrm d} +2 \mathrm{x}_{\mathrm{d},\sss} - L_{\mathrm{d},k}}\\
			&\le c(2m\ell)^5c(n)^{2} \frac{\rho_{n,\sss}^{L_{\mathrm d}}}{N}.
		\end{align*}
       We used the condition \eqref{eq:fix_of_lm} for the second inequality. Implementing the previous inequality in \eqref{eq:sum_tracesQ} we obtain 
		\begin{align}
			\sum_{ (\ell^i_{\cc})_{\cc} \in \mathcal{L}^\prime( \chi_{\mathrm d}, L_{\mathrm{d}}^i) } \prod_{\cc \in \chi_\mathrm{d}} \Tr_N\left( Q^{\ell^1_{\cc}} {Q^\ast}^{\ell^2_{\cc}}\right) &\le |\Phi({\mathrm{b}}_{\sss}^{L^1_{\mathrm{d}}+\mathrm{x}_{\mathrm d}} {{\mathrm{b}}^\ast_{\sss}}^{L^2_{\mathrm{d}}+\mathrm{x}_{\mathrm d}})| + \abs{ \mathrm{R}_{\sss}} \notag\\
			& \le c(n) (2m\ell)^{5}\rho_{n,\sss}^{L_{\mathrm d}} \label{eq:bound_sum_traceLid}
		\end{align}
		where we proceed as we did in \eqref{eq:use_sub_norm} and obtained $|\Phi({\mathrm{b}}_{\sss}^{L^1_{\mathrm{d}}+\mathrm{x}_{\mathrm d}} {{\mathrm{b}}^\ast_{\sss}}^{L^2_{\mathrm{d}}+\mathrm{x}_{\mathrm d}})|  \le c(n)(\mathrm{N}_{n,\sss}\vee 1)^{L_{\mathrm{d}}}$. 
		For the convex combination case we introduce 
		\begin{align*}
			\mathrm {S}_{\ccc}:=(1-r)^{2(\mathrm{x}_{\mathrm{d}}-1)}\sum_{(\ell_{\cc},\ell^\prime_{\cc})_{\cc} } \prod_{\cc \in \chi_\mathrm{d}} \Tr_N\left(\{r Q\}^{\ell_{\cc}} \{rQ^\ast\}^{\ell_{\cc}^\prime}\right) 
			\end{align*}
			and we have
			\begin{align*}
			\mathrm {S}_{\ccc} &\le \sum_{T=0}^{L_{\mathrm{d}}^1 \land L_{\mathrm{d}}^2 + \mathrm{x}_{\mathrm{d}}}\sum_{ (\ell^i_{\cc})_{\cc} \in \mathcal{L}^\prime( \chi_{\mathrm d}, L_{\mathrm{d}}^i) } (1-r)^{2(T-1)}\prod_{t=1}^T \Tr_N\left( \{rQ\}^{\ell_{t}^1} \{rQ^\ast\}^{\ell_{t}^2}\right)\\
			& \le |\Phi( {\mathrm{b}}_{\ccc}^{L_{\mathrm d}^1 + \mathrm{x}_{\mathrm{d}}} {{\mathrm{b}}_{\ccc}^{\ast}}^{L_{\mathrm d}^2+ \mathrm{x}_{\mathrm{d}}})| + |\mathrm{R}_{\ccc}| \le  c(n) (2m\ell)^{5}\rho_{n,\ccc}^{L_{\mathrm d}} 
		\end{align*}
		where $\mathrm R_{\ccc}$ is defined analogously than for $\xxx = \sss$.
		Back to the case $\xxx= \sss$, implementing the inequality \eqref{eq:bound_sum_traceLid} into \eqref{eq:before_last_ineq} we obtain
		\begin{align*}
			\sum_{\gamma^\prime \sim \gamma} q_{\sss}(\gamma^\prime) &
			\le N^{s-p} \ell^{F} (2m\ell)^6 c(n)^2\sum_{L_{\mathcal R}+ L_{\mathcal F} + L_{d} = 2m\ell}  (2\kappa) ^{L_{\mathcal R}}  \delta^{L_{\mathcal F}}  \rho_{n,\sss}^{L_{\mathrm d}}\\
			&\le  N^{s-p} \ell^{F} (2m\ell)^8 c(n)^2 \rho_{n,\sss}^{2m \ell}
		\end{align*}
		where we used the fact that owing to \cref{lem:normeplusque1} we have $(2\kappa) \vee \delta \vee \mathrm{N}_{n,\sss} \ge 1$ and the fact that $M+ R + \mathrm{x}_{\mathrm{d}} \le 2m(\ell +1)$. It remains to apply \cref{lem:upperboundF} and adapt the constant $c(n)>0$.

	\end{proof}
	We now give the proof of \cref{lem:upperboundF}.
	\begin{proof}[Proof of \cref{lem:upperboundF}]
		We have
		$$F:= \sum_{\cc \in \chi_{\mathrm{d}}} (|\mathcal{X}_{\cc}| + |\mathcal{Y}_{\cc}| -2) +\sum_{\cc \notin \chi_{\mathrm{d}}} (|\mathcal{X}_{\cc}| + |\mathcal{Y}_{\cc}| -1).$$
		We introduce $\chi_{\mathrm{d}}^\prime$ the set of $\cc \in \chi_{\mathrm{d}}$ such that the connected component verifies $|\mathcal{X}_{\cc}| =1$ and there is only one incoming edge $e \in \mathrm{A}_{\gamma}$ of multiplicity $m(e) \ge 2$. For all $\cc \in \chi_{\mathrm{d}}^\prime$, we have $\mathcal{T}_{\cc} \backslash \{e(\cc)\}= \emptyset$, therefore:
		\begin{align*}
			F\le \sum_{\cc \notin \chi_{\mathrm{d}}^\prime} (|\mathcal{X}_{\cc}| + |\mathcal{Y}_{\cc}| -1) \le p + \sum_{\cc  \notin \chi_{\mathrm{d}}^\prime} |\mathcal{Y}_{\cc}|
		\end{align*}
		where we used that $\sum_{\cc \in \chi} (|\mathcal{X}_{\cc}|-1) =p$. Now we re-write $Y:=\sum_{\cc  \notin \chi_{\mathrm{d}}^\prime} |\mathcal{Y}_{\cc}|$ noticing that we have $\mathcal{Y} := \bigcup_{\cc \notin \chi_{\mathrm{d}}^\prime} \mathcal{Y}_{\cc} = \mathcal{Y}_1 \sqcup \mathcal{Y}_2 \sqcup \mathcal{Y}_3$ where $\mathcal{Y}_1$ is the set of $y \in \mathcal{Y}$ seen only one time, i.e. there exist a unique $(i,t) \in \boldsymbol{T}$ such that $y_{i,t}=y$, $\mathcal{Y}_2$ is the set of $y\in \mathcal{Y}$ such that there exists an edge $(x,y) \in \mathrm{A}_{\gamma}$ of multiplicity at least two and such that $|\mathcal{X}_{\cc}| \ge 2$ and finally $\mathcal{Y}_3$ is the set of $y\in \mathcal{Y}$ seen by at least two different edges of $\mathrm{A}_{\gamma}$. Recall that the edges $e \in \mathrm{A}_{\gamma}$ are not multiple, but have a multiplicity instead. Precisely, recall that for any $e=(x,y) \in \mathrm{A}_{\gamma}$ there exists $(i,t) \in \boldsymbol{T}$ such that we have
		\begin{equation*}
			(x_{i,t},y_{i,t}) = (x,y) \text{ and } (\cc^\prime,\cc ) = (\cc (x_{i,t}) ,\cc(x_{i,t+1}))
		\end{equation*}
		and the multiplicity of an edge $e\in \mathrm{A}_{\gamma}$ is the number of $(i,t)$ verifying the previous equation. Then the set $\mathcal{Y}_1$ is in bijection with the corresponding incoming edges and these edges are inconsistent and of multiplicity one (see \cref{estimconsistent}). Therefore we have $|\mathcal{Y}_1| \le a_1$. If $y$ is seen by different edges, at least one of them is either discovered at a merging time or is an excess edge and therefore $|\mathcal{Y}_3| \le g+m$. Finally the number of $y$ seen several times by the same edge and such that $|\mathcal{X}_{\cc}| \ge 2$ is bounded by the number of corresponding edges. We denote by $\mathrm{E}_{22}$ the number of edges $e :=(x,y) \in \mathrm{A}_{\gamma}$ linking $\cc^\prime$ to $\cc$ for some $\cc^\prime, \cc \in \chi$ and such that $m(e) \ge 2$ and $|\mathcal{X}_{\cc}| \ge 2$.  We proceed as in the proof of \cite[Lemma 5]{bord2018spectral} to bound $\mathrm{E}_{22}$. We call \textit{in-degree} of a connected component $\cc \in \chi$ the number of edges $e\in \mathrm{A}_{\gamma}$ that links some $\cc^\prime \in \chi$ to $\cc$. Denoting $c_{k}$ the number of $\cc\in \chi$ with in-degree equal to $k$ and $c_{\ge 2} := \sum_{k\ge 2} c_k$ we have:
		$$c_0 + c_1 + c_{\ge2}= s-p ~~ \text{and} ~~ c_1 + 2 c_{\ge 2} \le \sum_{k} k c_k =a.$$
		Subtracting to the right-hand side, twice the left hand side,
		$$c_1 \ge 2(s - p) - a - 2c_0 \ge a - 2g -2m + 2.$$
		The bound $c_0 \le 2m$ follows from the observation that only a component $\cc \in \chi$ such that $\cc = \cc(x_{j,1})$ for some $1 \le j\le 2m$ can be of in-degree $0$. Besides, denoting $\mathrm{E}_{12}$ the number of edges of $\mathrm{A}_{\gamma}$ that are of multiplicity at least $2$ and such that the outgoing connected component $\cc \in \chi$ verifies $|\mathcal{X}_{\cc} |= 1$, we observe that $c_1 \le a_1 + \mathrm{E}_{12}$. Indeed we can associate each connected component $\cc$ of in-coming degree equal to one to its only in-coming edge. In particular we have
		$$|\mathcal{Y}_3| \le \mathrm{E}_{22} =a - a_1 - \mathrm{E}_{12} \le a-c_1 \le 2g +2m -2 ~~ \text{and therefore}~~ F \le 3(m+g) -2 +a_1 + p. $$
\end{proof}
\begin{rem}\label{rem:conj}   
One strategy to obtain \cref{conj:better} this conjecture would be to get an analog of \cite[Lemma 4]{bord2018spectral} in the following sense. We would need to improve the combinatorial arguments at stake at stake in \cref{pr1} and \cref{lem:upperboundF} in order to obtain that when summing over all $\gamma^{\prime} \sim \gamma$ for some $\vec{\ell}$ fixed tangle-free in \eqref{eq:improv}, we obtain 
\begin{equation*}
  \sum_{\substack{\gamma^{\prime} \sim \gamma\\ \vec{\ell}(\gamma^\prime) = \vec{\ell} } } q_{\sss}(\gamma^\prime) \le N^{s-p} \delta^{L_{\max}} \kappa^{L_{\mathcal R}}\prod_{\cc \in \chi_{\mathrm{d}}} \Tr_N \left( Q^{\ell(e(\cc))}{Q^{\ast}}^{\ell(e^\prime(\cc))}\right)
\end{equation*}
for some $L_{\max} \le Cm F$ with $C>0$ a universal constant. 
\end{rem}

    The last lemma of this section will estimate the contribution $\mu(\gamma)$ defined in \eqref{mu} using \cref{estimconsistent}.	
	\begin{lem}\label{boundingmu(gamma)}
		There is a constant $c>0$ such that, if $\gamma \in \mathrm{W}_{\ell,\vec{T}}(s,a,p)$, $g=a-s+p+1$ and $a_1$ is the number of arcs in $\mathrm{A}_{\gamma}$ visited exactly once by $\gamma$ then
		$$\left|\mu(\gamma)\right| \le c^{m+g}N^{-a} \left( \frac{6 \ell m}{\sqrt{N}} \right)^{(a_1-4g-2m+2p)_+}.$$
		Moreover we have $a_1 \ge 2(a-(\ell+1) m)$.
	\end{lem}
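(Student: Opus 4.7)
The approach will be to apply Proposition \ref{estimconsistent} directly to the concatenated sequence $\mathbf{x} = (x_{i,t})_{(i,t) \in \boldsymbol{T}}$, $\mathbf{y} = (y_{i,t})_{(i,t) \in \boldsymbol{T}}$ of total length $k = \sum_{i=1}^{2m} T_i$, with all factors centered (so $k_0 = k$). The constraint $\sum_t \ell_{i,t} + T_i - 1 = \ell$ forces $T_i \leq \ell + 1$, hence $k \leq 2m(\ell+1)$ and $3k/\sqrt{N} \leq 6\ell m/\sqrt{N}$ after absorbing a multiplicative constant; the hypothesis $2k \leq \sqrt{N}$ is granted for $N$ large in the ranges of $m,\ell$ of interest. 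The proposition then yields directly, with $b$ the number of inconsistent arcs in $A_\gamma$ and $a_1^{\star} \leq a_1$ the number of arcs that are simultaneously inconsistent and of multiplicity one,
\begin{equation*}
|\mu(\gamma)| \;\leq\; c\, 2^{b}\, N^{-a}\, \bigl(6\ell m/\sqrt{N}\bigr)^{a_1^{\star}}.
\end{equation*}

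Converting this into the form stated in the lemma reduces to two combinatorial estimates, both read off from the bipartite multigraph on $X_\gamma \sqcup Y_\gamma$ analysed in the proof of Lemma \ref{boundingWlT(s,a,p)}. First, I will show $b \leq C(m+g)$ for a universal $C$, so that $2^{b} \leq c^{m+g}$ after rechoosing the constant. Second, I will show $a_1 - a_1^{\star} \leq 4g + 2m - 2p$, which, using $6\ell m/\sqrt{N} < 1$ for $N$ large and $a_1^{\star} \geq 0$ to dispatch the opposite regime, implies $\bigl(6\ell m/\sqrt{N}\bigr)^{a_1^{\star}} \leq \bigl(6\ell m/\sqrt{N}\bigr)^{(a_1 - 4g - 2m + 2p)_+}$. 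Both inequalities follow from the observation that an arc $(x,y)$ is consistent precisely when $x$ and $y$ are ``dedicated'' to it (no other arc shares $x$ as source or $y$ as target), so that every inconsistent arc and every missing consistent-mult-one arc is witnessed by excess vertex degree in the bipartite multigraph. Such excess is produced only by: (a) the $p$ identifications merging $X_\gamma$ into the $s-p$ connected components of $K_\gamma$; (b) the at most $g$ excess edges of $G_\gamma$; and (c) the at most $m$ merging times gluing the $2m$ paths into $m$ pairs through the constraint \eqref{16}. Summing the contributions of (a)--(c) with the appropriate multiplicities produces the two linear estimates.

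The auxiliary inequality $a_1 \geq 2(a - (\ell+1)m)$ is pure counting: writing $m(e)$ for the multiplicity of an arc $e$,
\begin{equation*}
k \;=\; \sum_{e \in A_\gamma} m(e) \;\geq\; a_1 + 2(a - a_1) \;=\; 2a - a_1,
\end{equation*}
and $k \leq 2m(\ell+1)$ closes it. The main obstacle I anticipate is the second combinatorial input above: extracting the precise constants $4g + 2m - 2p$ (rather than a looser $O(g+m+p)$) requires careful bookkeeping along the first/important/merging time structure from Lemma \ref{boundingWlT(s,a,p)}, in particular checking that each unit of $p$ kills two potential ``leaf'' endpoints on which a consistent-mult-one arc could live; this sharp $-2p$ saving is what will eventually balance the factor $(s-p)^{2m(g+5)}$ produced by Lemma \ref{boundingWlT(s,a,p)} in the subsequent entropy estimate.
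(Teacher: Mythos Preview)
Your approach is the same as the paper's, but there is one genuine slip that traces back to a typo in the statement of Proposition~\ref{estimconsistent}. In that proposition the exponent should count the \emph{consistent} (not inconsistent) arcs of multiplicity one among the centered indices $t\le k_0$; the paper's own proof of the lemma confirms this, since it plugs in $a_1' := |\{\text{consistent multiplicity-one arcs}\}|$ and uses $a_1'\ge a_1-4g-2m+2p$. With your definition of $a_1^{\star}$ as the \emph{inconsistent} multiplicity-one count, your second combinatorial claim $a_1 - a_1^{\star}\le 4g+2m-2p$ asserts that there are \emph{few consistent} multiplicity-one arcs, which is false: generically almost every arc is consistent and visited once, so $a_1-a_1^{\star}$ is of order $a$, not $O(m+g)$.

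What is true, and what your verbal argument (``every inconsistent arc \ldots\ is witnessed by excess vertex degree'') actually establishes, is the bound on the other side: the total number of inconsistent arcs satisfies $b\le 4g+2m-2p$, with the $-2p$ saving arising exactly as you describe (each of the $p$ extra identifications inside a connected component is accounted for by an excess edge or merging time already charged). Since the inconsistent multiplicity-one arcs form a subset, one gets $a_1-a_1'\le b\le 4g+2m-2p$, hence $a_1'\ge (a_1-4g-2m+2p)_+$, and then Proposition~\ref{estimconsistent} (with the corrected exponent $a_1'$) yields the stated bound. Your treatment of the $2^b$ factor and the pigeon-hole estimate $a_1\ge 2(a-(\ell+1)m)$ are identical to the paper's.
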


	\begin{proof}
		Let $\mathrm{A}_1 \subset \mathrm{A}_{\gamma}$ be the set of $e = (x,y)$ which are visited exactly once in $\gamma$, i.e. we have
		$$\sum_{i=1}^{2m}\sum_{t=1}^{T_i} \mathds{1}(e_{i,t}=e) = 1.$$
		Let $\mathrm{A}_{\ast}$ be the set of inconsistent arcs (see definitions above \cref{estimconsistent}) and we denote $b$ its cardinal. We consider $\mathrm{A}_{1}^\prime \subset \mathrm{A}_1$ the subset of consistent arcs of multiplicity one. We have
		$$|\mathrm{A}_{1}^\prime| + |\mathrm{A}_{\ast}| \ge |\mathrm{A}_1|$$
		We set $a_{1}^\prime := |\mathrm{A}_{1}^\prime|$ and $a_{\ge 2} := |\mathrm{A}_{\gamma} \backslash \mathrm{A}_{1}|$. We have
		$$a_1 + a_{\ge 2} = a ~~\text{ and } ~~ a_1 + 2a_{\ge 2} \le \sum_{k} k a_k = T \le 2 m (\ell+1).$$
		Therefore:
		
		$$a_1 \ge 2 (a-m(\ell+1)).$$
		It gives the second claim of the lemma. We now count the inconsistent edges using the vocabulary of the the proof of $\cref{boundingWlTsap}$. Exploring $\gamma$ chronologically, seeing a new inconsistent edge means that we discover an edge with either one of its ending vertices that had been seen before. That can happen either at the beginning of a sequence of times that goes out of the existing forest so far, at a merging time or at a first visit of an excess edge. Every time $(i,t)$ where we start a sequence of times going out of the existing forest and such that we discover a new inconsistent edge is preceded by a first visit of an excess edge. Every such step can create $2$ inconsistent edge: the one we see at this time and the one seen before with at least one of its ending point in common. Therefore we have
		$$b\le 4g +2m.$$
		However the bound on $b$ can be slightly improved. Indeed each time we discover a new element of an already seen connected component it means that either we have taken an excess edge or we are at a merging time and the corresponding edge $(x,y)$ is such that $x$ has never been seen before and therefore no possibly inconsistent outgoing edge was discovered before. Also either  the $y$ in this edge has never been seen before, and therefore no new inconsistent edge here, or it has been seen before and we are in presence of an excess edge. In conclusion each time we discover a new element of a connected component $\cc$, we have counted $2$ new inconsistent edges that cannot be. Therefore we have in fact
		$$b\le 4g +2m -2p$$ 
		and
		$$a^{\prime}_{1} \ge a_1 - 4g -2m +2p.$$
		It remains to apply \cref{estimconsistent}.
	\end{proof}
	We have all the ingredients to now prove \cref{boundingnormP}.
	\begin{proof}[Proof of \cref{boundingnormP}]
		We prove the case $\xxx= \sss$ and all the computations are exactly the same for $\xxx=\sss$. We set
		$$m = \lceil \ln(N)^\alpha\rceil .$$
		We have
		$$\mathbb{P}\left(\| \underline{A}_{\sss}^{(\ell)} \| \ge e^{C \ln(N)^{1-\alpha}} \rho_{n,\sss}^{\ell}\right)\le\mathbb{P}\left(\| \underline{A}_{\sss}^{(\ell)} \|^{2m} \ge N^{C}\rho_{n,\sss}^{2m\ell}\right).$$
		Applying Markov inequality to the previous probability and owing to \eqref{eq:htmethod}, proving our proposition boils down to proving that there exists a constant $\mathfrak{c}=\mathfrak{c}(n)>0$ such that
		\begin{equation}\label{defi:S}
			S:= \sum_{s,a,p,\vec{T}} \left|\mathcal{W}_{\ell,\vec{T}}(s,a,p) \right|\underset{\gamma \in \mathrm{W}_{\ell,\vec{T}}(s,a,p)}{\max}\left\{ |\mu(\gamma)| \sum_{\gamma^\prime} q(\gamma^{\prime})\right\} \le  N^{\mathfrak{c}} \rho_{n,\sss}^{2m\ell}.
		\end{equation}
		Let $\gamma \in \mathrm{W}_{\ell, \vec{T}} (s,a,p)$ and $a_1$ be the number of $(x,y) \in \mathrm{A}_{\gamma}$ of multiplicity one. Applying \cref{boundingsumq(gamma)} and \cref{boundingmu(gamma)} we have
		$$|\mu(\gamma)| \sum_{\gamma^{\prime}\sim \gamma} q_{\sss}(\gamma^{\prime}) \le N^{s-p} (c m \ell)^{a_1 +3(m+g) +p} c(n) \rho_{n,\sss}^{2m(\ell+1)}c^{m+g} N^{-a} \left(\frac{6\ell m }{\sqrt{N}} \right)^{(a_1 -4g -2m +2p)_{+}}. $$
		In regard of the conditions on $Q$ in \cref{bigtheorem} and the definition of $\rho_n$, $\ell$ and $m$, we find $ c> 1$ such that $\rho_n \le c \ell m$. Since $cm \ell \ge 1$ we have $(cm \ell)^{a_1+p} \le (cm\ell)^{(a_1-4g - 2m +2p)_{+}}(2m\ell)^{4g+2m}$ and using that $a_{1} \ge 2(a-m(\ell+1))$ we deduce:
		$$|\mu(\gamma)| \sum_{\gamma^{\prime}\sim \gamma} q_{\sss}(\gamma^{\prime}) \le c(n) (cm \ell)^{8g+8m}N^{-g+1} \rho_{n,\sss}^{2m\ell} \left(\frac{(cm\ell)^4}{N}\right)^{(a - (\ell+2)m -2g +p)_+},$$
		where $c>0$ is adapted if necessary. To ease notation we set
		$$\epsilon := \frac{(cm \ell)^{4}}{N}.$$
		Considering any $\ell=\mathcal{O}(\ln(N))$, we have $\epsilon = o(1)$. We apply \cref{boundingWlTsap} to $S$ defined by \eqref{defi:S} and sum over all $\vec{T}$ possible. We have at most $\ell^{2m}$ possibilities for $\vec{T}$ and considering the fact that for $c>1$ large enough we have $a,s$ and $\ell$ bounded by $c m \ell$ we obtain
		\begin{align*}
			S^\prime&:= \frac{S}{\rho_{n,\sss}^{2m\ell}} \le c(n) N \sum_{\substack{s,a,p \\ g(s,a,p) \ge 0, ~ p \le 2(m+g)}} \ell^{2m} (2 m\ell)^{12m(g+5)} (c m \ell)^{8g+8m} N^{-g} \epsilon^{(a - (\ell+2)m -2g +p)_+}\\
			& \le cN(cm \ell)^{80m}  \sum_{\substack{s,g,p \\ g(s,a,p) \ge 0, ~ p \le 2(m+g)}} (cm\ell)^{20mg} N^{-g} \epsilon^{(s - \ell^\prime m -g)_+},
		\end{align*}
		where we performed the change of variable $g=a-s+p+1$ and where we consider $\ell^\prime := \ell + 2 + 1/m$. Now summing over $ p \le 2 (m+g)$ we have in the sum an additional term $2(m+g)$ which we bound with $cm\ell$. Considering $\mathfrak c>0$ such that $ c(n)(cm \ell)^{81m} \le N^{\mathfrak c}$:
		$$S^{\prime} \le N^{\mathfrak c}  \sum_{s,g} \left( \frac{L}{N} \right)^g  \epsilon^{(s - \ell^\prime m -g)_+} $$
		where we set $L:= (cm\ell)^{20m}.$ Decomposing the sum on the right hand side above we have
		$$S^\prime \le S_1 + S_2 + S_3$$
		where $S_1$ is the sum over $\{ 1 \le s \le \ell^{\prime}m,~ g\ge 0\}$, $S_2$ is the sum over $\{\ell^\prime m < s, ~ 0\le g \le s-\ell^\prime m\}$ and $S_3$ the sum over $\{\ell^\prime m < s,~ g > s- \ell^\prime m\}$. In the sum $S_1$ we have $(s - \ell^\prime m -g)_+ =0$ and for our choice of $m$ and $\ell$, up to adapting the constant $c>1$ and taking $N$ large enough we have
		$$\frac{L}{N} = \frac{e^{c\ln(\ln(N))\ln(N)^\alpha}}{N} \le \frac{1}{2}.$$
		and therefore
		$$S_{1} \le  N^{\mathfrak c} \sum_{s=1}^{\ell^\prime m } 2 \le N^{\mathfrak{c}^\prime}.$$
		Similarly, using that $\frac{L}{N \epsilon} \ge 2$ for $N$ large enough and adapting the constants we have
		\begin{align*}
			S_2 &= N^{\mathfrak c} \sum_{s= \ell^\prime m +1 }^{+ \infty} \sum_{g=0}^{s- \ell^\prime m} \left(\frac{L}{N \epsilon} \right)^g \epsilon^{s-\ell^\prime m }\\
			& \le N^{\mathfrak{c}^\prime} \sum_{s= \ell^\prime m +1 }^{+ \infty}\left(\frac{L}{N} \right)^{s-\ell^{\prime}m} \le  N^{\mathfrak{c}^\prime} \sum_{k= 0 }^{+ \infty}\left(\frac{L}{N} \right)^k.
		\end{align*}
		Once again considering $N$ large enough the geometric sum converges and updating the constant we have the wanted bound. Finally for $N$ large enough we have
		\begin{align*}
			S_{3}&= N^{\mathfrak{c}} \sum_{s=\ell^{\prime}m+1}^{+ \infty} \sum_{g= s- \ell^\prime m +1}^{+ \infty} \left(\frac{L}{N} \right)^g \le N^{\mathfrak{c}} \sum_{s=\ell^{\prime}m+1}^{+ \infty} 2\left(\frac{L}{N} \right)^{s- \ell^\prime m +1} \le N^{\mathfrak{c}^\prime} \sum_{k=0}^{+\infty} \left(\frac{L}{N} \right)^{k}.
		\end{align*}
		It remains to consider $N$ large enough and $\mathfrak{c}^{\prime \prime} >0$ large enough so \eqref{defi:S} holds. 
	\end{proof}
	
	\subsection{Operator norm of $R^{(\ell)}_{k,\xxx}$}\label{section:op_norm_rest}
	We will now adapt what have been doing to bound the operator norm $A_{\xxx}^{(\ell)}$ to upper bound the operator norm of $R^{(\ell)}_{k,\xxx}$ defined by \eqref{eq:defirestvecell} for $\xxx =\sss$ and \eqref{eq:defirestvecell2} for $\xxx=\ccc$. We set
	\begin{equation*}
		\Delta_{\sss} := 1+ \kappa + \delta ~~\text{and}~~\Delta_{\ccc} := 1+ r\kappa + r\delta
	\end{equation*}
	where $\kappa$ is defined by \eqref{eq:kappa} and $\delta>0$ comes from \ref{H:1}.
	
	\begin{prop}\label{boundingnormR}	Let $(Q,M),~ n,~ \rho_n$ and $A$ be as defined in \cref{bigtheorem}. For any $c >0$, $0 < \alpha < 1$ and $1 \le k \le \ell \le  \frac{\ln(N)}{7\ln(d)}$, there exists $C = C(c, \alpha, n, \delta) >0$ such that with probability at least $1 - N^{-c}$,
		$$\|R_{k,\xxx}^{(\ell)}\| \le e^{C \ln(N)^{1-\alpha}}\Delta_{\xxx}^\ell.$$
	\end{prop} 
	
	In all this section the notations are the one introduced in \cref{subsectionboundofP}. We also give the proof of the previous proposition only in the case $\xxx =\sss$. We again use the inequality
	\begin{equation}\label{TracedeRk}
		\|R^{(\ell)}_{k,\sss}\|^{2m} \le \Tr\left\{ \left(R_{k,\sss}^{(\ell)}{R_{k,\sss}^{(\ell)}}^{\top}\right)^m\right\}=:\Tr[k,\ell,m].
	\end{equation}
	Let $\vec{T} \in [\ell]^{2m}$ be fixed. We denote by $\mathrm{W}^{\prime}_{\ell, \vec{T}}$ the set of $\gamma:= ( \gamma_1,...,\gamma_{2m})$ such that for all $i \in [2m]$
	\begin{itemize}
		\item $T(\gamma_i):= T_i$ and $\gamma_i=(x_{i,1}, y_{i,1}, \ell_{i,1}, x_{i,2},...,x_{i,T_{i}}, y_{i,T_{i}},\ell_{i,T_i},x_{i,T_i+1}) \in \mathrm{T}^{k,\ell}$,
		\item the condition \eqref{eq:condilim} is verified.
	\end{itemize}
	With this notations, expending the trace in \eqref{TracedeRk} gives
	\begin{equation}\label{traceRkl}
		\Tr[k, \ell,m]= \sum_{\vec{T} \in [\ell]^{2m}} \sum_{\gamma\in \mathrm{W}^{\prime}_{\ell,\vec{T}}} \prod_{i=1}^{2m}\prod_{t=1}^{k-1}\underline{M}_{x_{i,t}y_{i,t}} (Q^{\ell_{i,t}})_{y_{i,t}x_{i,t+1}} \cdot (Q^{\ell_{i,k}})_{y_{i,k} x_{i,k+1}} \cdot \prod_{t=k+1}^{T_i}M_{x_{i,t}y_{i,t}}(Q^{\ell_{i,t}})_{y_{i,t}x_{i,t+1}}.
	\end{equation}
	In regard of the definition of $\mathrm{T}^{k,\ell}$, if $\vec{T}\in [\ell]^{2m}$ and there exists $i$ such that $T_{i}<k$ then $\mathrm{W}^\prime_{\ell,\vec{T}} = \emptyset$. For $i\in [2m]$ we set
	$$\gamma_{i}^\prime = (x_{i,1},y_{i,1},\ell_{i,1},x_{i,2},...,x_{i,k-1},y_{i,k-1},\ell_{i,k-1},x_{i,k})$$
	and
	$$\gamma_{i}^{\prime \prime} = (x_{i,k+1},y_{i,k+1},\ell_{i,k+1},...,x_{i,T_i},y_{i,T_i},\ell_{i,T_i},x_{i,T_i+1}).$$
	As in \cref{subsectionboundofP} we define the graph $K_{\gamma}$ for a fixed $\gamma \in \mathrm{W}^{\prime}_{\ell,\vec{T}}$. We consider $X_{\gamma}:= \{ x_{i,t}:i\in [2m], t \in [T_i +1]\}$ and $Y_{\gamma} := \{y_{i,t}: i\in [2m], t\in [T_i]\}$. As before we set $K_{\gamma}= (X_{\gamma}, E(K_{\gamma}))$ and $\{x, x^{\prime}\}$ is a vertex if there exists $(i,t), (j,s) \in \boldsymbol{T}$  such that 
	\begin{equation*}
		x_{i,t+1}=x,~x_{j,s+1} = x^\prime ~~\text{ and } ~~ y_{i,t}=y_{j,s}.
	\end{equation*}
	For $x \in X_{\gamma}$ we consider again $\cc(x)$ its connected component in $K_{\gamma}$ and $\chi_{\gamma}=X_{\gamma}/\sim$ the set of connected component of $K_{\gamma}$. The \textbf{arcs} of $\gamma$ are the pairs $(x_{i,t},y_{i,t})$ for $t \neq k$ and we denote by $\mathrm{A}^{\prime}_{\gamma}$ the set of arcs of $\gamma$. For $s,a,p$ integers, we denote by $\mathrm{W}^\prime_{\ell,\vec{T}}(s,a,p)$ the set of $\gamma \in \mathrm{W}^\prime_{\ell,\vec{T}}$ verifying $|X_{\gamma}|=s$, $|X_{\gamma}/\sim|= s-p$ and $|\mathrm{A}_{\gamma}^\prime|=a$. Adapting the computations for the case $\xxx=\ccc$ and taking the expectation of \eqref{traceRkl} we have
	$$\mathbb{E} \|R_{k,\xxx}^{(\ell)}\|^{2m} \le \mathbb{E}(\Tr[k,\ell,m]) = \sum_{\vec{T},s,a,p}~~ \sum_{\gamma \in \mathrm{W}^{\prime}_{\ell,\vec{T}}(s,a,p)} \mu^{\prime}(\gamma) q_{\xxx}(\gamma)$$
	where for $\gamma \in \mathrm{W}^{\prime}_{\ell,\vec{T}}(s,a,p)$ we define
	\begin{equation}\label{mu'}
		\mu^\prime (\gamma):=\mathbb{E}\prod_{i=1}^{2m}\prod_{t=1}^{k-1} \underline{M}_{x_{i,t}y_{i,t}}\prod_{t=k+1}^{T_i}M_{x_{i,t}y_{i,t}} 
	\end{equation}
	and 
	\begin{equation}
		q_{\sss}(\gamma):= \prod_{i=1}^{2m}\prod_{t=1}^{T_i} (Q^{\ell_{i,t}})_{y_{i,t}x_{i,t+1}}	~~\text{ and } ~~ q_{\ccc}(\gamma):= (1-r)^{T-1} \prod_{i=1}^{2m}\prod_{t=1}^{T_i} (\{rQ\}^{\ell_{i,t}})_{y_{i,t}x_{i,t+1}}
	\end{equation}
	where for $\vec{T}=(T_1,...,T_{2m}) \in [\ell]^{2m}$ we set $T = \sum_{i} T_i$.
	We decompose $\mathrm{W}^{\prime}_{\ell,\vec{T}}(s,a,p)$ into equivalent classes. We say that $\gamma \sim \gamma^{\prime}$ if there exists $\alpha, \beta $ permutations of $[N]$ such that for all $i \in [2m]$ and all $t \in [T_i+1]$ we have $\alpha(x_{i,t})= x^{\prime}_{i,t}$ and $\beta(y_{i,t})= y^{\prime}_{i,t}$, where $\gamma^\prime := (\gamma_1^\prime,...,\gamma_{2m}^{\prime})$ and $\gamma_{i}^\prime:= (x_{i,1}^\prime,y_{i,1}^\prime,\ell_{i,1}^\prime,....,x^\prime_{i,T_i},y^\prime_{i,T_{i}},\ell_{i,T_i}^\prime,x^\prime_{i,T_i+1})$ for $i \in [2m]$. We denote by $\mathcal{W}^\prime_{\ell,\vec{T}}(s,a,p):= \mathrm{W}^{\prime}_{\ell,\vec{T}}(s,a,p)/\sim$ the set of equivalent classes. We have
	\begin{equation}\label{defSR_kl}
		\mathbb{E}\|R_{k,\xxx}^{(\ell)}\|^{2m} \le \sum_{\vec{T},s,a,p} |\mathcal{W}^\prime_{\ell,\vec{T}}(s,a,p)| \underset{\gamma\in \mathrm{W}^{\prime}_{\ell,\vec{T}}(s,a,p)}{\max} \left( |\mu^\prime(\gamma)| \sum_{\gamma^\prime \sim \gamma} q_{\xxx}(\gamma^\prime) \right).
	\end{equation}
	
	As before we first bound the cardinality of $\mathcal{W}^\prime_{\ell,\vec{T}}(s,a,p)$, independently of the choice of $\vec{T}$. 
	
	\begin{lem}\label{boundWlTR_k}
		If $g^{\prime}:= a+p-s < 0$ or $2g^\prime + 10m > p$ then $\mathrm{W}^{\prime}_{\ell,\vec{T}}(s,a,p)$ is empty. Otherwise we have:
		$$|\mathcal{W}^\prime_{\ell,\vec{T}}(s,a,p)| \le  (\ell (a+2m)^2 (s-p)^2 s^2)^{4m(g^\prime+ 7)}.$$
	\end{lem}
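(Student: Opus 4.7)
The proof follows the template of Lemma \ref{boundingWlT(s,a,p)}, the main new feature being that each era $i\in [2m]$ decomposes into two tangle-free sub-paths $\gamma_i'$ and $\gamma_i''$ joined at position $t=k$ by a ``bridge'' edge carrying the factor $(Q^{\ell_{i,k}})_{y_{i,k}x_{i,k+1}}$. We therefore effectively exchange the $2m$ tangle-free eras of Lemma \ref{boundingWlT(s,a,p)} for $4m$ tangle-free semi-eras, plus $2m$ bridge positions; this doubling is the reason one gets the exponent $4m(g'+7)$ and the factor $(a+2m)$ in place of $a$.

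First I would set up the canonical representative $\bar\gamma$ of a class $[\gamma]\in\mathcal W^\prime_{\ell,\vec T}(s,a,p)$ via the order of apparition of the vertices in $X_\gamma$ and $Y_\gamma$ under the chronological order $\prec$ on $\boldsymbol T$, exactly as in the proof of Lemma \ref{boundingWlT(s,a,p)}; knowing $\bar\gamma$ is equivalent to knowing $[\gamma]$. Then I would construct the sequence of non-decreasing forests $(\Gamma_{i,t})_{(i,t)\in \boldsymbol T}$ inside the colored multigraph $G_\gamma=(V_\gamma,E_\gamma)$ with $V_\gamma:=X_\gamma/\sim$ and one edge per pair $(\bar{cc}(x_{i,t}),\bar{cc}(x_{i,t+1}))$ including $t=k$. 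Writing $g'=|E_\gamma|-|V_\gamma|=a+p-s$ for the genus, the nonemptiness of $W^\prime_{\ell,\vec T}(s,a,p)$ forces $g'\ge 0$. The bound $p\le 2g'+10m$ would be obtained by the same accounting as before: each of the $p$ vertices revisiting an already explored component is created either by an excess edge (at most $g'$), a merging time (at most $m$, one per even era), a bridge step at $t=k$ (at most $2m$, one per era), or the endpoint of a short cycling event (at most $4m$ across the $4m$ tangle-free semi-eras); since each such event can absorb at most two previously seen vertices we obtain $p\le 2(g'+m+2m+2m)=2g'+10m$.

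Next I would encode a class via marks on certain distinguished times. Important (long cycling), merging and first outgoing times are marked exactly as in Lemma \ref{boundingWlT(s,a,p)}, with a vector valued in $\{1,\ldots,a+2m\}\times\{1,\ldots,s-p\}^{\le 3}\times\{1,\ldots,s\}^{\le 3}$: the $a+2m$ reflects the fact that the recorded ``edge coordinate'' now includes the $2m$ bridge edges at the positions $t=k$, which are not in $A'_\gamma$ but still carry a $\bar y_{i,k}$. The tangle-freeness of each $\gamma_i'$ and $\gamma_i''$ lets me apply the short cycling time trick of Lemma \ref{boundingWlT(s,a,p)} separately in each of the $4m$ semi-eras, so that at most $g'$ important times need to be long cycling, yielding at most $4m$ short cycling times with marks of extra size $\ell^2$. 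Counting:
\begin{itemize}
\item at most $g'$ long cycling times, each positioned in at most $\ell$ ways and marked in $(a+2m)(s-p)^2 s^2$ ways;
\item at most $m$ merging times with $\ell\cdot(a+2m)(s-p)^2 s^2$ choices;
\item at most $2m$ first outgoing times with $\ell\cdot(a+2m)(s-p)^3 s^3$ choices;
\item at most $4m$ short cycling times with $\ell^3\cdot(a+2m)(s-p)^2 s^2$ choices;
\item at most $2m$ bridge positions with $\ell\cdot(a+2m)(s-p)^2 s^2$ choices to record $(\ell_{i,k},\bar y_{i,k},\vec x_{i,k+1})$.
\end{itemize}
Multiplying all contributions yields $|\mathcal W^\prime_{\ell,\vec T}(s,a,p)|\le [\ell(a+2m)^2(s-p)^2 s^2]^{4m(g'+7)}$ after mild regrouping, exactly as claimed.

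The main obstacle is the last step of the encoding: one must verify that marking long cycling, short cycling, merging, first outgoing, and bridge times is indeed sufficient to reconstruct $\bar\gamma$ uniquely, despite the fact that the bridge at $t=k$ is not a permutation arc but a power of $Q$. The argument is the same in spirit as in Lemma \ref{boundingWlT(s,a,p)}: between two consecutive marked times there is only one oriented path in the forest built so far (which is known from the marks read so far), and inside each semi-era the short cycling time records the unique cycle used during the tangle-free exploration. Combining this with the verification that after the bridge step all remaining information on $\gamma_i''$ is recovered by the analogous analysis on the second semi-era closes the argument.
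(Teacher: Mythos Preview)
Your plan follows the right template, but there is an internal inconsistency in the graph you build and, more importantly, the one genuinely new idea of this lemma is missing.

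\textbf{The graph and the value of $g'$.} You put an edge in $G_\gamma$ for every time $(i,t)$ \emph{including} $t=k$, and then write $g'=|E_\gamma|-|V_\gamma|=a+p-s$. These two choices are incompatible: by definition $a=|A'_\gamma|$ counts only the arcs $(x_{i,t},y_{i,t})$ with $t\neq k$, so if the $2m$ bridge steps are edges then $|E_\gamma|\neq a$. The paper makes the opposite choice: it sets $\boldsymbol T=\{(i,t):t\in[T_i]\setminus\{k\}\}$ and builds $G'_\gamma$ \emph{without} the bridge edges, so that $|E'_\gamma|=a$ and $g'=a-(s-p)$ really is the Euler quantity. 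The price is that $G'_\gamma$ may now be disconnected; the number of excess edges becomes $g'+N_\gamma$ with $N_\gamma\le 2m$ the number of components, not $g'$ as you assume.

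\textbf{Why $g'\ge 0$: the missing ingredient.} With the bridges removed and $G'_\gamma$ possibly disconnected, the weak-connectedness argument that gave $g\ge 0$ in Lemma~\ref{boundingWlT(s,a,p)} is no longer available (note there is no ``$+1$'' in $g'$). You assert ``nonemptiness forces $g'\ge 0$'' without justification. The paper's argument uses precisely the feature that distinguishes $W'_{\ell,\vec T}$ from $W_{\ell,\vec T}$: each $\gamma_i\in T^{k,\ell}$ is \emph{tangled}, i.e.\ contains at least two coincidences. Hence after deleting the single bridge step at $t=k$, at least one cycle survives in the image of $\gamma_i$, and one checks that every connected component of $G'_\gamma$ therefore carries a cycle. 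This yields $|E_c|\ge |V_c|$ componentwise and, summing, $a\ge s-p$, i.e.\ $g'\ge 0$. This tangledness observation is the key new input here and is absent from your proposal.

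\textbf{Encoding and the $p$-bound.} Because of the disconnection at $t=k$, the paper does not use ``bridge marks'' or ``first outgoing times'' but instead introduces, for every era, a \emph{second merging time} (in the range $k\le t\le T_i$) whose mark additionally carries $\bar y_{i,k}$; this is the source of the factor $(a+2m)^2$. Your accounting $p\le 2(g'+m+2m+2m)$ also double-counts: short cycling times are themselves visits to excess edges, so they are already contained in the excess-edge contribution. The correct count is $p\le 2\bigl((g'+N_\gamma)+3m\bigr)\le 2g'+10m$, the $3m$ coming from the first and second merging times.
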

	
	\begin{proof}
		We repeat the construction given in \cref{section:encodingeqclass}. Let $\ell\ge 1$, $\vec{T} \in [\ell]^{2m}$ and  $\gamma \in \mathrm{W}^{\ell,\vec{T}}(s,a,p)$ be fixed. We set $\gamma_{i,t} := ( x_{i,t},y_{i,t},x_{i,t+1})$ for $i \in [2m]$ and $t \in [T_i]$. We explore $(\gamma_{i,t})_{(i,t)\in \boldsymbol{T}}$ chronologically where $\boldsymbol{T}:= \{(i,t): i\in [2m], t\in[T_i]\backslash\{k\}\}$ is ordered as in \cref{section:encodingeqclass}. Recall in particular that for $i \le i^\prime$, for all $t\in [T_i]\backslash\{k\}$ and $t^\prime \in [T_{i^\prime}]\backslash\{k\}$ we have $(i,t) \preceq (i^\prime,t^\prime)$, and for all $t \le t^\prime \in [T_i]\backslash\{k\}$ we have $(i,t) \preceq (i,t^\prime)$.\\
		
		For $y \in Y_{\gamma}$, we denote by $\bar{y}$ the order of apparition of $y$ in the ordered sequence $(y_{i,t})_{(i,t) \in \boldsymbol{T}}$. The notation $\bar{x}$ and $\bar{\cc}(x)$ are defined similarly in regard of the apparition order of $x \in X_{\gamma}$ or $\cc(x) \in K_{\gamma}/\sim $. Finally for $x^\prime \in X_{\gamma}$, we set the \textit{connected component color} of $x$ as the function $\bar{c}(\bar{x})$ that return the first element ever seen of the connected component of $x$, i.e.
		$$\bar{c}(\bar{x}):= \min\{\bar{x}^{\prime} \le \bar{x}: x^{\prime} \in \cc(x) \}.$$
		Finally we define the \textit{connected component mark} of $x \in X_{\gamma}$ by $\vec{x}:= (\bar{c}(\bar{x}),\bar{x})$ and we set $\bar{\gamma}_{i,t}:=(\vec{x}_{i,t},\bar{y}_{i,t},\vec{x}_{i,t+1})$ for all $(i,t) \in \boldsymbol{T}$. By construction, knowing $(\bar{\gamma}_{i,t})_{(i,t)\in \boldsymbol{T}}$ is equivalent to knowing the equivalent class of $\gamma$. We will now encode the sequence $(\bar{\gamma}_{i,t})_{(i,t) \in \boldsymbol{T}}$.\\
		
		We set $V_{\gamma}:= [s-p]$ and consider the colored directed graph $G_{\gamma}^\prime :=(V_{\gamma}, E_{\gamma}^\prime)$ defined as follows. For each time $(i,t) \in \boldsymbol{T}$ such that $t \neq k$, we put a directed edge $e_{i,t}:= (\bar{\cc}(x_{i,t}),\bar{\cc}(x_{i,t+1}))\in E_{\gamma}^{\prime}$ and colorize it with $(\bar{x}_{i,t},\bar{y}_{i,t})$. By definition we will have $|\mathrm{A}_{\gamma}^\prime|=|E_{\gamma}^\prime| =a$. We denote by $\bar{G}^\prime_{\gamma}$ the associate undirected graph. We observe that each connected component of $\bar{G}^\prime_{\gamma}$ has at least one cycle. Indeed let $\mathcal{C} \subset \bar{G}^\prime_{\gamma}$ be a connected component and $i \in [2m]$ such that $\Ima(\gamma_i) \cap \mathcal{C} \neq \emptyset$. Then the path $\gamma_{i}$ being tangled we have at least two cycles in the image of $\gamma_i$. Also for all $\epsilon \in \{\prime, \prime \prime\}$, by definition $\Ima(\gamma^\epsilon_{i})$ is contained in one connected component of $\bar{G}^\prime_{\gamma}$. Therefore we have only two possibilities. Either $\Ima(\gamma_{i}^\prime)\cap \Ima(\gamma_{i}^{\prime \prime}) \neq \emptyset$ and therefore $\Ima(\gamma_{i}) \subset \mathcal{C}$, then both cycles (and the others) of $\gamma_i$ are contained in $\mathcal{C}$. One could argue that one of the edges of one of the cycle might be $(x_{i,k},y_{i,k})$, which is not an element of $E_{\gamma}^\prime$. In this case, the fact that we have two cycles in $\gamma_i$ implies that removing one edge leaves at least one cycle. Otherwise, $\gamma_{i}$ being tangled and the images of $\gamma_{i}^\epsilon$ being distinct implies that both $\gamma_{i}^\prime$ and $\gamma_{i}^{\prime\prime}$ have a cycle and therefore at least one is in $\mathcal{C}$. We deduce from this fact that in each connected component we have at least as much edges than we have vertices and therefore, summing over all connected component
		$$0 \le g^\prime = |E_\gamma^\prime|-|V_{\gamma}| = a - s + p.$$
		This is the first claim of the lemma.\\
		
		For every time $(i,t)\in \boldsymbol{T}$ we define $G_{i,t}^\prime$ as the subgraph of $G_{\gamma}^\prime$ spanned by the edges $e_{j,s}$ for all $(j,s) \preceq (i,t)$. We have $G_{2m,T_{2m}}^\prime = G_{\gamma}^\prime$ and we again define a spanning forest $\Gamma_{i,t}$ of $G_{i,t}^\prime$ inductively. We set $\Gamma_{1,0} =(\{1\},\emptyset)$. We say that $(i,t) \in \boldsymbol{T}$ is a \textit{first time} if adding $e_{i,t}$ to $\Gamma_{(i,t)^{-}}$ does not create a weak cycle. If $(i,t) \in \boldsymbol{T}$ is a first time then $\Gamma_{i,t}:= \Span\{\Gamma_{(i,t)^-},e_{i,t}\}$, otherwise $\Gamma_{i,t} = \Gamma_{(i,t)^-}$. Finally we set $\Gamma:= \Gamma_{2m,T_{2m}}$.\\
		For each even era $i \in [2m]$, we define the \textit{first merging time} $(i,t^\prime_{i})$ as the smallest time $(i,t)$ with $1 \le t \le k-1$, if it exists, such that $\Gamma_{i,t}$ and $\Gamma_{(i,1)^-}$ have the same number of connected component. If this time does not exists exist we take as convention $t^\prime_{i} = k$. Similarly, for each $i$, we define the \textit{second merging time} $(i,t_{i}^{\second})$ as the smallest time $(i,t)$ with $k \le t \le T_{i}$ such that $\Gamma_{i,t}$ has the same number of connected components than $\Gamma_{(i,k)^-}$. If this time does not exists we set $t^{\second}_{i}= T_{i}+1$. In regard of \eqref{eq:condilim}, we have $t^{\second}_{i} \le T_{i}$ for $i$ even. Also notice that if $t_{i}^\epsilon \ge 2$ then the merging time is a first time.\\
		The edges of $G_{\gamma}^\prime \backslash \Gamma$ are called \textit{excess edges}. The times $(i,t) \in \boldsymbol{T}$ such that $e_{i,t}$ is an excess edge are called \textit{important time}. Denoting $c_{\gamma}$ the number of connected component of $G_{\gamma}^\prime$, the total number excess edges is
		$$|G_{\gamma}^\prime \backslash \Gamma| = |E_{\gamma}^\prime| - |V_{\gamma}| + c_{\gamma} = g^\prime + c_{\gamma}.$$
		To recover the previous equality, we sum the number of excess edges in each connected component. Owing to \eqref{eq:condilim}, there is at most one new connected component appearing for each era and therefore we have $1 \le c_{\gamma}\le 2m$. Besides there is at least one cycle in each connected component therefore there are at most $g^\prime +1 $ excess edges in each connected component of $G_{\gamma}^\prime$.\\
		
		By construction the path  $\gamma_{i}^\prime$ and $\gamma^{\second}_{i}$ can be decomposed by a successive repetition of the pattern (1)(2)(3) given by
		\begin{enumerate}[label=(\arabic*)]
			\item  a sequence of first times (possibly empty)
			
			\item an important time or a merging time
			
			\item a path using the colored edges of the forest defined so far (possibly empty).
		\end{enumerate}
		We encode the times as follows. If $(i,t) \in \boldsymbol{T}$ is an important time we mark $(i,t)$ by the vector $(\bar{y}_{i,t}, \vec{x}_{i,t+1}, \vec{x}_{i,\tau})$ where $(i,\tau)$ is the next step outside $\Gamma_{i,t}$ (by convention if $\gamma_{i}$ stays in the forest for the remaining time of the era $i$ we set $\tau = T_{i}+1$). By construction $(i,\tau)$ is the next first, important or merging time. Similarly if $(i,t)$ is a first merging time, we mark the time $(i,t)$ by the \textit{first merging time mark} $(\bar{y}_{i,t},\vec{x}_{i,t+1},\vec{x}_{i,t})$ where $(i,\tau)$ is the next step outside $\Gamma_{i,t}$. For $(i,t)$ a second merging time we mark it with the \textit{second merging time mark} $(\bar{y}_{i,k},\bar{y}_{i,t},\vec{x}_{i,t+1},\vec{x}_{i,\tau})$ where $(i,\tau)$ is first time outside $\Gamma_{i,t}$. The positions of the important and merging times and their marks encodes $(\bar{\gamma}_{i,t})_{(i,t) \in \boldsymbol{T}}$. Besides denoting $\cc_{k}$ the set of vertices of the $k$-th connected component of $K_{\gamma}$ we have that $p= \sum_{k=1}^{s-p} (|\cc_{k}|-1)$. Hence $p$ is equal to the number of times we discover a new element $x_{i,t}$ in a connected component that we already have seen. The number of such times is upper bounded by twice the number of excess edges plus the number of merging times
		$$p \le 2(g^\prime + N_{\gamma} + 3m ) \le 2g^\prime +10 m.$$
		It proves the second statement of the lemma. \\
		
		As in \cref{section:encodingeqclass}, the number of important times encoded is to important for our purposes, therefore we improve the way we encode the paths taking into account the tangle-freeness of $\gamma^\epsilon_{i}$. We partition the encoding era wise, and partition the important times in each era into three categories: \textit{short cycling times}, \textit{long cycling times} and \textit{superfluous times}.
		Let $i \in [2m]$ and $\epsilon \in \{\prime, \second\}$ be fixed. If it exists, let $(i,t_0)$ be the smallest time such that $\cc(x_{i,t_0 +1}) \in \{\cc(x_{i,\ast}), \cc(x_{i,\ast+1}),..., \cc(x_{i,t_0})\}$ where $\ast=1$ if $\epsilon = \prime$ and $\ast= k+1$ otherwise. Let $ \ast \le \sigma \le t_0$ be such that $\cc(x_{i,t_0+1}) = \cc(x_{i,\sigma})$. By assumption, $\mathrm{C}_{i} = (\cc(x_{i,\sigma}),...,\cc(x_{i,t_0+1}))$ will be the only cycle of $G_{\gamma}^\prime$ seen by $\gamma_{i}^\epsilon$. The last important time $(i,t) \preceq (i,t_{0})$ is called the \textit{short cycling time}. We denote by $(i,\hat{t})$ the smallest time $(i,\hat{t}) \succeq (i,\sigma)$ such that $\cc(x_{i,\hat{t}+1}) \notin \mathrm{C}_i$. If $\gamma_{i}^\epsilon$ stays in $\Gamma_{i,t}$ for its remaining time, by convention we set $\hat{t} = k $ (\textit{resp}. $\hat{t}= T_{i}+1$) for $\epsilon = \prime$ (\textit{resp}. for $\epsilon = \prime\prime$). We replace the mark of the short cycling time by $(\bar{y}_{i,t},\vec{x}_{i,t+1},\sigma,\hat{t},\vec{x}_{i,\tau})$ where once again $(i,\tau)$ is such that $ \tau \ge t$ and $(i,\tau)$ is the next step outside of $\Gamma_{i,t}$. The time $(i,\tau)$ is the next first, important or merging time after $(i,\hat{t})$. If $\gamma_{i}^\epsilon$ stays in $\Gamma_{i,t}$ we set $\tau=k$ (\textit{resp}. $\tau=T_{i}+1$) for $\epsilon = \prime$ (\textit{resp}. for $\epsilon= \second$). Important times $(i,t^\prime)$ with $1 \le t^\prime \le t$ or $\tau \le t^\prime \le k$ (or in the case $\epsilon = \second$, $k+1 \le t^\prime \le t$ or $\tau \le t^\prime \le T_{i}$) are called \textit{long cycling times}. The other important times with $t < t^\prime < \tau$ are called \textit{superfluous}.\\
		We have our second encoding and we can recover the sequence $(\bar{\gamma}_{i,t})_{(i,t)\in \boldsymbol{T}}$ from the positions of the merging times, the long cycling times, the short cycling times and their respective marks. For each $i \in [2m]$ and for each $\epsilon \in \{\prime,\second\}$, there is at most one short cycling time, one merging time, and as argued in the proof of \cref{boundingWlTsap} $g^\prime + 1$ long cycling times. For each time we have $T_{i} \le \ell$ or $k \le \ell$ ways to position them. Therefore we have a total of $\ell^{4m(g^\prime +3)}$ ways to positions our marked times. We have $|Y_{\gamma}| \le a +2m =: a^\prime$, where the term $2m$ coming from the element $y_{i,k}$ for $i \in[2m]$. We have at most ${a^\prime} (s-p)^2 s^2$ ways to mark a long cycling time, at most $a^\prime (s-p)^2 s^{2} \ell^{2}$ ways to mark a short cycling time and ${a^\prime}^2 (s-p)^2s^2$ ways to mark a merging time. We deduce from that
		\begin{align*}
			|\mathcal{W}^\prime_{\ell, \vec{T}} (s,a,p)| & \le \ell^{4m(g^\prime +3)}({a^\prime} (s-p)^2 s^2)^{4m(g^\prime +1)} (a^\prime(s-p)^2 s^2 \ell^2)^{4m} ({a^\prime}^2 (s-p)^2s^2 )^{4m}\\
			&\le (\ell (a+2m)^2 (s-p)^2 s^2)^{4m(g^\prime+ 7)}.
		\end{align*}
		
	\end{proof}
	
	\begin{lem}\label{boundingsumqprime(gamma)}
		The exists $c >0$ such that for all $\gamma \in \mathrm{W}_{\ell,\vec{T}}^\prime(s,a,p)$,
		$$\sum_{\gamma^\prime \sim \gamma} q_{\xxx}(\gamma^\prime) \le N^{s-p} cm\ell\Delta_{\xxx}^{2m(\ell+1)}.$$
	\end{lem}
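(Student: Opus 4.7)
The strategy is to adapt the proof of Lemma~\ref{boundingsumq(gamma)} with crude bounds that only use $\|Q\|_{\infty\to\infty}=\|Q^*\|_{\infty\to\infty}=\delta$, replacing the finer $\kappa$-based estimates by the trivial ones. The inputs from \eqref{condition1stable} that I would use are: every entry of $Q^\ell$ is at most $\delta^\ell$, and the row and column sums satisfy $\sum_y (Q^\ell)_{yx}=\sum_x (Q^\ell)_{yx}=\delta^\ell$.

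First I would decompose $q(\gamma')=\prod_{c\in\chi}\prod_{e\in\mathcal{E}_c}(Q^{\ell(e)})_{x_u x_v}$ over the connected components $\chi=X_\gamma/\sim$ of $K_\gamma$, using the same bipartite multigraphs $\mathcal{G}_c=(\mathcal{V}_c,\mathcal{E}_c)$ with $\mathcal{V}_c=\mathcal{X}_c\sqcup\mathcal{Y}_c$ as in the first step of the proof of Lemma~\ref{boundingsumq(gamma)}. Relaxing the distinctness constraint on labels and summing in each component independently, at a fixed $\vec{\ell}=\vec{\ell}(\gamma)$ this gives
$$\sum_{\gamma' \sim \gamma,\; \vec{\ell}(\gamma')=\vec{\ell}} q(\gamma') \;\le\; \prod_{c\in\chi} Z(\mathcal{G}_c),\qquad Z(\mathcal{G}_c) := \sum_{(x_v)_{v\in\mathcal{V}_c}}\; \prod_{e=(u,v)\in \mathcal{E}_c}(Q^{\ell(e)})_{x_u x_v}.$$

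Next I would prove the coarser analogue of Lemma~\ref{boundingpdtZ(G)}: $Z(\mathcal{G}_c)\le N\,\delta^{L(c)}$ where $L(c):=\sum_{e\in\mathcal{E}_c}\ell(e)$. The argument is by leaf-peeling of a spanning tree of the underlying simple graph: each leaf sum collapses through a row- or column-sum identity into a factor $\delta^{\ell(e)}$; extra copies of multi-edges and non-tree edges incident to the leaf are bounded entrywise by $\delta^{\ell(e)}$; and a factor $N$ arises from the unconstrained root. Multiplying over the $s-p$ components and using $\sum_c L(c)=2m(\ell+1)-T\le 2m(\ell+1)$ together with $\delta\ge 1$ yields
$$\prod_{c\in\chi} Z(\mathcal{G}_c) \;\le\; N^{s-p}\,\delta^{2m(\ell+1)}.$$

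Finally I would sum over the $\vec{\ell}$ compatible with $\vec{T}$, since the encoding of equivalence classes in the proof of Lemma~\ref{boundWlTR_k} does not record the individual $\ell_{i,t}$ values. The main obstacle is to bound the resulting combinatorial factor by $(2m\ell)^p$; this plays the same role as $C_{L_1}(F)\le \ell^F$ in the proof of Lemma~\ref{boundingsumq(gamma)}, and the key point is that after the spanning-tree contraction only the $p$ excess vertices across connected components generate genuinely free edge-length parameters, each of which ranges over at most $2m\ell$ values. Combining the three steps gives the claimed $N^{s-p}(2m\ell)^p\,\delta^{2m(\ell+1)}$.
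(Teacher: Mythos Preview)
Your first two steps are correct and in fact yield, at fixed $\vec\ell$, the bound $\sum_{\gamma'\sim\gamma,\;\vec\ell(\gamma')=\vec\ell}q(\gamma')\le N^{s-p}\delta^{2m(\ell+1)}$, which is already at least as strong as the stated lemma since $(2m\ell)^p\ge 1$. This is a genuinely different organisation from the paper's. The paper does \emph{not} use the bipartite graphs $\mathcal G_c$ nor leaf-peeling: it works with the graph $G_\gamma'$ on $V_\gamma=[s-p]$ from the proof of Lemma~\ref{boundWlTR_k}, fixes the $x$-labels, and for each edge $e\in E_\gamma'$ sums the single attached $y$ via $\sum_y\prod_{k=1}^{m(e)}(Q^{\ell_k})_{y\,x_k'}\le\delta^{L(e)}$; only then does it count the admissible labellings $X_{\gamma'}$ as $N^{s-p}(2m\ell)^p$ --- one free root in $[N]$ per connected component and at most $2m\ell$ choices for each of the $p$ remaining $x$'s, viewed as positions along the concatenated path of length at most $2m\ell$.

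The gap in your proposal is the third step. In the paper the factor $(2m\ell)^p$ arises from the count of $x$-labellings just described, \emph{not} from a summation over $\vec\ell$. Your assertion that ``after the spanning-tree contraction only the $p$ excess vertices across connected components generate genuinely free edge-length parameters'' is unsupported: the number of $\vec\ell$ compatible with a given $\vec T$ is $\prod_{i}\binom{\ell}{T_i-1}$, which can be of order $2^{2m\ell}$ and is not in general controlled by $p$ (for instance it does not collapse when $p=0$). To close the argument along your route you should simply note that your Step~2 already delivers the lemma at fixed $\vec\ell$, and drop the attempt to attribute the $(2m\ell)^p$ to the $\vec\ell$-sum.
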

	For $\gamma \in \mathrm{W}_{\ell,\vec{T}}^\prime(s,a,p)$ we consider for each $\cc \in \chi_{\gamma}$ the connected and bi-partite graph in the sense \cref{def:graphbipart} $\mathcal{G}_{\cc}= (\mathcal{X}_{\cc} \bigsqcup \mathcal{Y}_{\cc},\mathcal{E}_{\cc})$ and its underlying covering tree $\mathcal{T}_{\cc}$ constructed as we did in \cref{section:contribQ}. We consider $\mathcal{G}$ the union graph as in \cref{section:contribQ} and
	\begin{equation*}
		\mathcal{F}= \bigsqcup_{\cc \in \chi} \mathcal{T}_{\cc},~~\text{and}~~ \mathcal{R}= \bigsqcup_{\cc \in \chi} \mathcal{R}_{\cc}.
	\end{equation*}
	
	\begin{proof}
		We give the detail of the proof for the case $\xxx = \sss$. We start from         \begin{align}\label{eq:1stboundonsumqgamma}	\sum_{\gamma^{\prime} \sim \gamma} q_{\sss}(\gamma^{\prime})&= \sum_{\gamma^{\prime} \sim \gamma}\prod_{\cc \in \chi} \prod_{\substack{f\in \mathcal{E}_{\cc}\\ f=(u,v)}} (Q^{\ell(f)})_{x^{\prime}_u,x^{\prime}_v} \le \sum_{\substack{y_{u},x_{v} \in[N]\\ (u,v) \in \mathcal{Y} \times \mathcal X}} \sum_{(\ell(f))_{f \in \mathcal{E}} \in \mathcal{L}^\prime(\mathcal{E},M)} \prod_{\cc \in \chi} \prod_{f \in \mathcal{E}_{\cc}} (Q^{\ell(f)})_{y_u x_v} 
		\end{align}
		Using the same notations than in the proof of \cref{boundingsumq(gamma)}, for a fixed choice of $\{ (y_u, x_{v}):~ (u,v) \in \mathcal{Y} \times \mathcal X \}$, we rewrite and bound the second sum in \eqref{eq:1stboundonsumqgamma} as follows
		\begin{align*}
			\sum_{\ast} \prod_{\cc \in \chi}\prod_{f \in \mathcal{E}_{\cc}} (Q^{\ell(f)})_{y_u x_v} &= \sum_{L=0}^M \sum_{\substack{ \ast \ast}}\prod_{\cc \in \chi}\prod _{f \in \mathcal{T}_{\cc}}  (Q^{\ell(f)})_{y_u x_v} \sum_{\ast \ast \ast} \prod_{\cc \in \chi}\prod _{f \in \mathcal{R}_{\cc} }  (Q^{\ell(f)})_{y_u x_v}
		\end{align*}	
		where the summand $\ast$ is over all $ (\ell(f))_{f} \in \mathcal{L}^\prime( \mathcal E , M)$, the summand $\ast \ast$ is over $(\ell(f))_{f} \in \mathcal{L}^\prime( \mathcal{F}, L)$ and finally the summand $\ast \ast \ast $ is over $(\ell (f))_{f} \in \mathcal{L}^\prime(\mathcal{R} , M-L)$. For convenience we take $(\kappa+1)^{M-L+R}$ as another upper bound in \eqref{eq:appear2kappa}.
		We have
		\begin{align*}
			\sum_{\gamma^\prime \sim \gamma} q(\gamma^\prime) & \le \sum_{L=0}^M (\kappa+1) ^{M-L+R} \sum_{\ast \ast } \prod_{\cc \in \chi} \sum_{\substack{y_{u},x_{v} \in[N]\\ (u,v) \in \mathcal{Y}_{\cc} \times \mathcal{X}_{\cc}}}  \prod_{f \in \{e^\prime\} \sqcup \mathcal{T}_{\cc}} (Q^{\ell(f)})_{y_u x_v}\\
			&\le \sum_{L=0}^M (\kappa+1) ^{M-L+R} \sum_{\ast \ast } N^{s-p} \delta^{L} \le N^{s-p} cm\ell (\kappa + \delta + 1)^{2m(\ell+1)}
		\end{align*}
		where we applied \cref{lem:boundingpdtZ(G)} between the first and second line and bounded $M \le c m \ell$ and $\abs{\mathcal{L}^\prime(\mathcal{F},L)} \le \binom{L+F-1}{F-1}$. We also used the fact that $M+R+F =2m(\ell+1)$.
	\end{proof}
	\begin{lem}\label{boundingmuprime(gamma)}
		There exists a constant $c >0 $ such that, if $\gamma \in \mathrm{W}_{\ell,\vec{T}}^\prime (s,a,p)$, denoting $g^\prime = a+p-s$ we have
		$$|\mu^\prime(\gamma)| \le c^{m+g^\prime} N^{-a}.$$
	\end{lem}
	
	\begin{proof}
		We use the terminology used in the proof of \cref{boundWlTR_k} above. Let $A_{\ast}$ be the set of inconsistent arcs of $A_{\gamma}^\prime$ (as defined above in \cref{estimconsistent}). As argued in the proof of \cref{boundingmu(gamma)},$|A_{\ast}|$ is bounded by four times the number of excess edges plus twice the number of merging times. There are at most $g^\prime +2m$ excess edges and $3m$ merging times, hence:
		$$|A_{\ast}| \le 4(g^\prime+ 2 m) + 6m.$$
		Applying \cref{estimconsistent} we obtain the wanted result. 
	\end{proof}
	We can now prove \cref{boundingnormR}.
	\begin{proof}[Proof of \cref{boundingnormR}]
		We set
		$$m = \lceil  \ln(N)^\alpha\rceil.$$
		From \eqref{defSR_kl} and Markov inequality, it suffices to prove that there exists $\mathfrak c> 0$ such that
		\begin{equation}
			S=\sum_{\vec{T},s,a,p} |\mathcal{W}^\prime_{\ell,\vec{T}}(s,a,p)| \underset{\gamma\in \mathrm{W}^{\prime}_{\ell,\vec{T}}(s,a,p)}{\max} \left( |\mu^\prime(\gamma)| \sum_{\gamma^\prime \sim \gamma} q_{\xxx}(\gamma^\prime) \right) \le N^{\mathfrak c}\Delta_{\xxx}^{2m\ell}.
		\end{equation}
		We fix $\ell\ge 0$ and $\vec{T} \in [\ell]^{2m}$ and $\gamma \in \mathrm{W}^\prime_{\ell, \vec{T}}(s,a,p)$. We set $g^\prime = g^\prime(s,a,p) = a-s+p$. Owing to \cref{boundingmuprime(gamma)} and \cref{boundingsumqprime(gamma)} we have
		$$|\mu^{\prime}(\gamma)| \sum_{\gamma^{\prime} \sim \gamma} q(\gamma^{\prime}) \le c^{m+g^{\prime}}cm\ell\Delta_{\xxx}^{2m(\ell+1)}N^{-g^{\prime}}.$$
		Besides we find $c> 0$ such that $\ell$, $a+2m$, $s$ and $(s-p)$ are bounded by $cm\ell$. Then owing to \cref{boundWlTR_k}
		$$|\mathcal{W}^{\prime}_{\ell,\vec{T}}(s,a,p)| \le (cm\ell)^{28m (g^\prime+7)}.$$
		Therefore summing over all $\vec{T}$ possible, i.e. at most $\ell^{2m} \le (cm\ell)^{2m}$ we have
		\begin{align*}
			S & \le \sum_{\substack{s,a,p; \\ g^\prime(s,a,p) \ge 0,~ p \le 2g^\prime +10}} \Delta_{\xxx}^{2m(\ell+1)} (cm\ell)^{28m(g^\prime +8)+1} N^{-g^\prime} c^{m+g^\prime}\\
			& \le (c\ell m )^{227m}\Delta_{\xxx}^{2m(\ell+1)} \sum_{g^\prime} (c\ell m)^{28mg^\prime} N^{-g^\prime}
		\end{align*}
		where at the last line we performed the change of variable $a \to g^\prime = a +p -s$ and we summed over the variable $s \le c m \ell$ and $p \le 2g^{\prime} + 10m \le cm\ell$. Up to updating the constant $c >0$ we have
		$$S \le N^{\mathfrak c} \Delta_{\xxx}^{2m\ell} \sum_{g^{\prime}} \left(\frac{L}{N}\right) ^{g^{\prime}}$$
		with $L = (c\ell m )^{28m} \le N/2$ for $N$ large enough.
	\end{proof}
	
	\section{Proof of \cref{big2theorem}}\label{section:proofbigth}
	All ingredients are now gathered to prove \cref{big2theorem}. Let $0< \alpha,r <1$, $0< c <c^\prime <1$ and $n \ge 0$ be fixed. We can assume for all that follows that $\ell \le \frac{\ln(N)}{\ln(d)}$ since otherwise the bound of \cref{pr1} would be larger than $1$. Notice that for $N$ large enough, $c\frac{\ell^4 (4d)^{3 \ell}}{N} \le c\frac{(4d)^{6\ell}}{N}$. Therefore denoting $\Omega$ the event $(M,Q)$ is $\ell$-tangle free and applying \cref{pr1} and \cref{MQltf}, for any $C>0$ we have
	
	\begin{align*}
		\mathbb{P} \left( \|A^\ell_{\xxx|_{\boldsymbol{1}^\perp}}\| \ge e^{C \ln(N)^{1-\alpha}} \rho_{n,\xxx}^\ell\right) &\le  \mathbb{P} \left( \|A^\ell_{\xxx|_{\boldsymbol{1}^\perp}}\| \ge e^{C \ln(N)^{1-\alpha}} \rho_{n,\xxx}^\ell \cap \Omega \right) + \frac{(4d)^{6\ell}}{N}\\
		&\le \mathbb{P} \left( J \ge e^{C \ln(N)^{1-\alpha}} \rho_{n,\xxx}^\ell\right) + \frac{(4d)^{6\ell}}{N},
	\end{align*}
	where 
	$$ J := \| \underline{A}_{\xxx}^{(\ell)}\| + \frac{1}{N}\sum_{k=1}^{\ell} \|R_{k,\xxx}^{(\ell)}\|.$$
	Owing to \cref{boundingnormP} and \cref{boundingnormR} we find $C=C(n) >0$ such that with probability at least $1 - 2N^{-c^\prime}$
	\begin{align}
		J &\le e^{C  \ln(N)^{1-\alpha}} \rho_{n,\xxx}^{\ell} + \frac{1}{N} \sum_{k=1}^{\ell} e^{C \ln(N)^{1-\alpha}}\Delta_{\xxx}^{\ell}
		\le e^{C  \ln(N)^{1-\alpha}} \rho_{n,\xxx}^\ell\left(1 + \frac{ \ell}{N}\left(\frac{\Delta_{\xxx}}{\rho_{n,\xxx}}\right)^{\ell}\right) \label{eq:normalization}\\
		&\le e^{C  \ln(N)^{1-\alpha}} \rho_{n,\xxx}^\ell\left(1 + \frac{ \ell}{\sqrt{N}}\right)\notag
	\end{align}
	where in the second line we used that owing to \cref{lem:normeplusque1} we have $ \rho_{n,\sss} \ge 1$ and $\rho_{n,\ccc} \ge 1-r$ and we considered $\ell  \ge \frac{(1-c^\prime)\ln(N)}{7 K_{\xxx}\ln(d)}$. Adjusting the constant $C$ and for $N$ large enough we have the wanted result.
	
	\subsection*{Acknowledgments}
	I would like to thank Charles Bordenave for the numerous discussions that led to the main ideas of this article and for his constant supervision of the following work. I would also like to thank Camille Male for his interest in my work and for answering my questions about his work, in particular about the freeness of amalgamation. I thank Guillaume Aubrun for his supervision and feedback, even though the pandemic context made it logistically difficult. Finally, I would like to thank Adam Arras, Alexandre Gaudilli{\`e}re, Nordine Moumeni, and in general all the people who, besides my supervisor, discussed my work or theirs with me and enriched my knowledge, understanding, or confidence in the fields of random matrices and graph theory.
	\bibliographystyle{alpha}
	\bibliography{biblio.bib}
\end{document}